\renewcommand{\thispagestyle}[1]{} 
\newtheorem{theorem}{Theorem}[section]
\newtheorem{corollary}[theorem]{Corollary}
\newtheorem{lemma}[theorem]{Lemma}
\newtheorem{proposition}[theorem]{Proposition}
\theoremstyle{definition}
\theoremstyle{remark}
\newtheorem{remark}[theorem]{Remark} 
\theoremstyle{example}
\numberwithin{equation}{section} 
\newcommand{\comment}[1]{}
\newcommand{\R}{\mathbb R}
\newcommand{\N}{\mathbb N}
\newcommand{\eps}{\varepsilon}
\newcommand{\ls}{\leqslant}
\newcommand{\gr}{\geqslant}
\providecommand{\abs}[1]{\lvert#1\rvert}
\providecommand{\norm}[1]{\lVert#1\rVert}
\providecommand{\conv}[1]{\mathop{\rm conv}\left\{#1\right\}}
\providecommand{\dim}[1]{\mathop{\rm dim}(#1)}
\providecommand{\det}[1]{\mathop{\rm det}\left(#1\right)}
\newcommand{\fg}{\mathfrak{g}}
\newcommand{\fk}{\mathfrak{k}}
\newcommand{\fs}{\mathfrak{s}}
\newcommand{\fa}{\mathfrak{a}}
\newcommand{\fm}{\mathfrak{m}}
\newcommand{\fr}{\mathfrak{r}}
\newcommand{\fn}{\mathfrak{n}}
\newcommand{\fp}{\mathfrak{p}}
\newcommand{\fz}{\mathfrak{z}}
\newcommand{\fgl}{\mathfrak{gl}}
\renewcommand{\:}{\, : \,}
\def\Ad{\mathrm{Ad}}
\def\ad{\mathrm{ad}}
\def\Tr{\mathrm{Tr}}
\def\SL{\mathrm{SL}}
\def\SO{\mathrm{SO}}
\def\O{\mathrm{O}}
\def\GL{\mathrm{GL}}
\def\S{\mathrm{S}}
\def\I{\mathrm{I}}
\def\M{\mathrm{M}}
\begin{document}
\large 

\title{Bounding marginal densities via affine isoperimetry}

\author{Susanna Dann \and Grigoris Paouris \and Peter Pivovarov}

\maketitle

\begin{abstract}
Let $\mu$ be a probability measure on $\R^n$ with a bounded density
$f$.  We prove that the marginals of $f$ on most subspaces are
well-bounded. For product measures, studied recently by Rudelson and
Vershynin, our results show there is a trade-off between the strength
of such bounds and the probability with which they hold.  Our proof
rests on new affinely-invariant extremal inequalities for certain
averages of $f$ on the Grassmannian and affine Grassmannian. These are
motivated by Lutwak's dual affine quermassintegrals for convex sets.
We show that key invariance properties of the latter, due to Grinberg,
extend to families of functions.  The inequalities we obtain can be
viewed as functional analogues of results due to Busemann--Straus,
Grinberg and Schneider. As an application, we show that without any
additional assumptions on $\mu$, any marginal $\pi_E(\mu)$, or a small
perturbation thereof, satisfies a nearly optimal small-ball
probability.

\end{abstract}

\section{Introduction}

In this paper, we discuss connections between affine isoperimetric
inequalities in convex geometry and concentration results for high
dimensional probability distributions.  We address the following
question: if $\mu$ is a probability measure on $\R^n$ with a bounded
density, to what extent are its marginal densities also bounded?
Recall that if $\mu$ has density $f$ and $E$ is a $k$-dimensional
subspace of $\R^n$, the density of the marginal $\pi_E(\mu)$ on $E$ is
given by \begin{equation}
\label{eqn:marginal}
f_{\pi_E(\mu)}(x) = \int_{E^{\perp}+x} f(y)dy \quad (x\in E).
\end{equation}

Rudelson and Vershynin \cite{RV_marginals} recently proved that if
$f(x)=\prod_{i=1}^nf_i(x_i)$, where each $f_i$ is a bounded density on
$\R$, then for every $1\ls k\ls n$ and every $k$-dimensional subspace
$E$,
\begin{equation}
  \label{eqn:RV_bound}
  \norm{f_{\pi_E(\mu)}}_{\infty}^{1/k} \ls C\max_{i\ls n}\norm{f_i}_{\infty}
\end{equation}
where $C$ is a numeric constant and $\norm{\cdot}_{\infty}$ is the
$L^{\infty}$-norm. On the other hand, even for products, the stronger
inequality
\begin{equation}
  \label{eqn:RV_bound_comp}
  \norm{f_{\pi_E(\mu)}}_{\infty}^{1/k} \ls C\norm{f}_{\infty}^{1/n}
\end{equation}
need not hold for {\it all} subspaces $E$; indeed, if $f_1,\ldots,f_k$
are very peaked, the left-hand side of (\ref{eqn:RV_bound_comp}) can
be arbitrarily large for the coordinate subspace $E=\mathop{\rm
  span}\{e_1,\ldots,e_k\}$, while the right-hand side can be
well-behaved since 
\begin{equation}
  \label{eqn:means}
  \norm{f}_{\infty}^{1/n}\ls \sqrt[n]{\norm{f_1}_{\infty}\cdots
    \norm{f_n}_{\infty}}\ls \max_{i\ls n}\norm{f_i}_{\infty}.
\end{equation}
Nevertheless, we show that for an arbitrary
bounded density $f$, most of its marginals nearly satisfy
(\ref{eqn:RV_bound_comp}), which we quantify with respect to the Haar
probability measure $\mu_{n,k}$ on the Grassmannian manifold $G_{n,k}$
of all $k$-dimensional subspaces of $\R^n$.

\begin{theorem} 
  \label{thm:marginal_intro}
  Let $\mu$ be a probability measure on $\R^n$ with a bounded density
  $f$. Then for each $1\ls k \ls n-1$, there exists
  ${\cal{A}}\subseteq G_{n,k}$ with $\mu_{n,k}( {\cal{A}}) \gr 1-
  2e^{-kn}$ such that for every $E\in {\cal{A}}$,
    \begin{equation}
      \label{eqn:marginal_intro_1}
      f_{\pi_E(\mu)}(x)^{1/k}\ls C\norm{f}_{\infty}^{1/n}
    \end{equation} 
    for all $x \in E$, except possibly on a set of
    $\pi_E(\mu)$-measure less than $e^{-kn}$.
\end{theorem}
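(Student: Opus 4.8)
The plan is to reduce the pointwise assertion to a single average over $G_{n,k}$ by two applications of Markov's inequality, and to put the real content into an affinely-invariant extremal inequality for a functional version of Lutwak's dual affine quermassintegrals. Fix the exponent $p=n+1$ (this is the value forced by the affine invariance below, and it keeps every constant dimension-free). For a constant $C>0$ and $E\in G_{n,k}$, set $B_E:=\{x\in E:f_{\pi_E(\mu)}(x)^{1/k}>C\norm{f}_\infty^{1/n}\}$. Since $f_{\pi_E(\mu)}(x)\ls f_{\pi_E(\mu)}(x)^{n+1}\big(C^k\norm{f}_\infty^{k/n}\big)^{-n}$ on $B_E$, integrating over $B_E$ gives $\pi_E(\mu)(B_E)\ls C^{-kn}X(E)$, where $X(E):=\norm{f}_\infty^{-k}\int_E f_{\pi_E(\mu)}(x)^{n+1}\,dx$. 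The crux will then be the inequality
\begin{equation*}
  \int_{G_{n,k}}\int_E f_{\pi_E(\mu)}(x)^{n+1}\,dx\,d\mu_{n,k}(E)\;\ls\;C_0^{\,kn}\,\norm{f}_\infty^{k}\Big(\int_{\R^n}f\Big)^{n+1-k},
\end{equation*}
with $C_0$ an absolute constant, valid for every bounded density $f$. Granting it and using $\int_{\R^n}f=1$, we obtain $\int_{G_{n,k}}X\,d\mu_{n,k}\ls C_0^{kn}$, so Markov's inequality at level $e^{kn}$ supplies $\mathcal A\subseteq G_{n,k}$ with $\mu_{n,k}(\mathcal A)\gr 1-e^{-kn}\gr 1-2e^{-kn}$ on which $X\ls(eC_0)^{kn}$. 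Hence $\pi_E(\mu)(B_E)\ls(eC_0/C)^{kn}<e^{-kn}$ for every $E\in\mathcal A$ as soon as $C>e^2C_0$, and the choice $C:=2e^2C_0$ proves the theorem.

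It remains to prove the displayed functional inequality, where essentially all of the difficulty lies. Disintegrating the affine Grassmannian $A_{n,n-k}$ of affine $(n-k)$-planes $F$ into a direction $E^\perp\in G_{n,n-k}$ and an offset $x\in E$ identifies its left-hand side with $\int_{A_{n,n-k}}\big(\int_F f\big)^{n+1}\,d\mu(F)$, a functional version of Lutwak's dual affine quermassintegral $\int_{A_{n,n-k}}\volume{n-k}{F\cap K}^{n+1}\,d\mu(F)$; the exponent $n+1$ is precisely the one for which a Jacobian computation (a Blaschke--Petkantschin-type change of variables on $A_{n,n-k}$) renders this quantity invariant under volume-preserving affine maps of $\R^n$, as Grinberg showed for convex bodies. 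I would first check that this invariance persists with $\mathbf 1_K$ replaced by an arbitrary density, carrying Grinberg's argument through the layer-cake representation of $f$; and then show that, among all densities with prescribed $\norm{f}_\infty$ and $\int f$, the functional is maximized by $f=\norm{f}_\infty\,\mathbf 1_{rB_2^n}$ with $\norm{f}_\infty\,\omega_nr^n=\int f$. For the latter, a symmetrization argument (iterated Steiner-type symmetrizations, each not decreasing the average) reduces to radially symmetric decreasing $f$; the functional is convex in $f$, so on the (convex) class of such $f$ with the prescribed norm and integral its maximum is attained at an extreme point, namely a ball indicator; and evaluating the functional on that extremal $f$ is a Beta-function computation in which the exponent $k$ on $\norm{f}_\infty$ --- dictated by the affine invariance --- produces a near-cancellation of volumes of Euclidean balls and leaves the bound $C_0^{kn}$ with $C_0$ absolute.

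The hard part is the extremal step, i.e.\ establishing the functional inequality that contains those of Busemann--Straus, Grinberg and Schneider as the special case of indicators. The symmetrization must be arranged so that it respects both the average over the Grassmannian and the affine invariance just set up, and the nonlinearity of the exponent $n+1$ is what makes the naive devices --- superposing level sets, the triangle inequality --- point the wrong way. The affine invariance is used essentially: without it one could not collapse the extremal problem to a single Euclidean ball, and the constant in the theorem would degrade with the dimension rather than being absolute.
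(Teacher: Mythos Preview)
Your reduction via two Markov inequalities to the functional inequality
\[
\int_{M_{n,n-k}}\Bigl(\int_F f\Bigr)^{n+1}\,dF \;\ls\; C_0^{\,kn}\,\norm{f}_\infty^{k}\,\norm{f}_1^{\,n+1-k}
\]
is correct and is exactly the paper's scheme; the paper proves the sharper form with $\norm{f\vert_F}_\infty^{k}$ in the denominator on the left (Theorem~\ref{thm:Schneider}) and then bounds $\norm{f\vert_F}_\infty\ls\norm{f}_\infty$ just as you do. Your identification of the affine invariance at the exponent $n+1$ is also right.

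The gap is in your proposed proof of the displayed inequality. The step ``iterated Steiner-type symmetrizations, each not decreasing the average'' is not justified and, as stated, does not work: Steiner symmetrization of $f$ in a direction $u$ has no monotone effect on $\int_F f$ for flats $F$ not parallel to $u$, and there is no known direct argument that it increases the $(n{+}1)$-st moment over $M_{n,n-k}$. The paper's mechanism is genuinely different and is where the content lies. On each fixed flat $F$ one applies a functional random-simplex inequality (Corollary~\ref{cor:A}, itself proved via Christ's form of the Rogers--Brascamp--Lieb--Luttinger rearrangement inequality), which bounds $\bigl(\int_F f\bigr)^{n+1}/\norm{f\vert_F}_\infty^{n-k}$ by a constant times the moment $\Delta_{n-k}(f\vert_F,\ldots,f\vert_F)$ of the volume of a random simplex with vertices sampled from $f\vert_F$. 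The Blaschke--Petkantschin identity (Theorem~\ref{thm:BPMnk}) then collapses the integral of this simplex moment over $M_{n,n-k}$ \emph{exactly} to $c_{n,k}\norm{f}_1^{\,n-k+1}$. So rearrangement does enter, but flat-by-flat through the random-simplex functional---for which BLL gives the correct monotonicity---not as a global Steiner step acting on the Grassmannian average. Your convexity/extreme-point argument on the radial class is reasonable once one is there, but the symmetrization that would get you there is precisely the hard part your sketch does not supply.
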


Thus given $f$, first sampling $E\in G_{n,k}$ according to $\mu_{n,k}$
and then $x\in E$ according to $\pi_E(\mu)$,
(\ref{eqn:marginal_intro_1}) holds with overwhelming probability; on
$G_{n,k}$ this is optimal (see Lemma
\ref{lemma:Gaussian_sharp}). Furthermore, one must exclude exceptional
sets of positive $\pi_E(\mu)$-measure as can be seen by considering a
neighborhood of a Besicovitch set (see, e.g., \cite[Chapter 9]{BP}).

As discussed in \cite{RV_marginals}, bounds for marginals are
connected to small-ball probabilities, which are useful in random
matrix theory e.g., \cite{RV_ICM}. If $X$ is a random vector in $\R^n$
with density $f$, then $f_{\pi_E(\mu)}$ gives the density of the
orthogonal projection $P_EX$ of $X$ onto $E$. When $f$ is bounded,
Theorem \ref{thm:marginal_intro} implies that for every $E\in
{\cal{A}}$, $\varepsilon>0$ and any $z\in E$,
\begin{equation}
  \label{eqn:marginal_intro_2}
  \mathbb{P}\left(\abs{P_EX-z}\ls \varepsilon \sqrt{k} \right) \ls
  (C_1\varepsilon \norm{f}_{\infty}^{1/n})^{kn/(n+1)},
\end{equation}
where $\abs{\cdot}$ denotes the Euclidean norm.  In contrast,
(\ref{eqn:RV_bound}) implies that for any $z\in E$
\begin{equation}
  \label{eqn:smallball}
  \mathbb{P}\left(\abs{P_EX-z}\ls \varepsilon \sqrt{k} \right) \ls
  (C_2\varepsilon \max_{i}\norm{f_i}_{\infty})^{k}.
\end{equation}
Thus if $f=\prod_i f_i$ and $\norm{f_i}_{\infty}$ are not identical,
the base in the probability in (\ref{eqn:marginal_intro_2}) is smaller
than that in (\ref{eqn:smallball}) (cf. (\ref{eqn:means})) but the
exponent $\frac{n}{n+1}$ is slightly worse.  Furthermore, for
subspaces $E$ that do not belong to $\mathcal{A}$, one can perturb
them to ensure that (\ref{eqn:marginal_intro_2}) holds (see Corollary
\ref{result-corollary}).

The goal of this paper is to show that a purely probabilistic
statement such as Theorem \ref{thm:marginal_intro} is ultimately based
on an affine-invariance property of certain integrals on the
Grassmannian $G_{n,k}$ and affine Grassmannian $M_{n,k}$ and
corresponding extremal inequalities. Here $M_{n,k}$ is equipped with
its cannonical rigid-motion invariant measure $\nu_{n,k}$ (see Section
\ref{section:prelim}). In particular, for non-negative, bounded
integrable functions $f$ on $\R^n$, we consider
\begin{equation}
  \label{eqn:ratioGnk}
    \int_{G_{n,k}}
    \frac{\left(\int_{E}f(x)dx\right)^{n}}{\norm{f\lvert_E}_{\infty}^{n-k}}
    d\mu_{n,k}(E)
\end{equation} where $f\vert_E$ is the restriction of $f$ to $E$, and 
\begin{equation}
  \label{eqn:ratioMnk}
    \int_{M_{n,k}} \frac{\left(\int_{F}f(x)dx\right)^{n+1}}
        {\norm{f\vert_{F}}_{\infty}^{n-k}}d\nu_{n,k}(F).
\end{equation} 

Our interest in such quantities stems from the following notion: for
$1\ls k< n$, the {\it dual affine quermassintegrals} of a convex body
$K\subset \R^n$ are defined by
\begin{equation}
  \widetilde{\Phi}_{n-k}(K) =
  \frac{\omega_n}{\omega_k}\left(\int_{G_{n,k}}\abs{K\cap
    E}^n d\mu_{n,k}(E)\right)^{1/n}
\end{equation}
where $\omega_n$ denotes the volume of the Euclidean ball $B_2^n$ in
$\R^n$ of radius one and $\abs{\cdot}$ denotes Lebesgue measure. These
were introduced by Lutwak (see \cite{Lutwak_isepiphanic},
\cite{Lutwak_intersection} for background) and have proved to be an
indespensable tool for quantitative questions concerning
high-dimensional probability distributions, e.g.,
\cite{E_Milman_mixed}, \cite{PaoPiv_smallball},
\cite{PaourisValettas}. In \cite{Grinberg}, Grinberg proved that
$\widetilde{\Phi}_{n-k}(K)=\widetilde{\Phi}_{n-k}(SK)$ for each
volume-pre\-serv\-ing linear transformation $S$.  Motivated by
Grinberg's result, we prove that the quantities in
(\ref{eqn:ratioGnk}) and (\ref{eqn:ratioMnk}) are also invariant under
volume preserving linear and affine transformations, respectively.
Our argument uses the structure of semi-simple Lie groups.

In the case when $f=\mathds{1}_K$, where $K$ is a convex body (or
compact set), both (\ref{eqn:ratioGnk}) and (\ref{eqn:ratioMnk})
satisfy corresponding affine isoperimetric inequalities. In particular, a
result of Busemann-Straus \cite{BusemannStraus} and Grinberg
\cite{Grinberg} states that if $K$ is a convex body in $\R^n$ and
$1\ls k\ls n-1$, then
\begin{equation}
  \label{eqn:Grinberg}
  \int_{G_{n,k}}\abs{K\cap E}^{n}d\mu_{n,k}(E) \ls
  \frac{\omega_{k}^{n}}{\omega_{n}^k}\abs{K}^{k};
\end{equation}
when $k>1$, equality holds only for origin-symmetric ellipsoids.  The
$k=n-1$ case is Busemann's seminal intersection inequality
\cite{Busemann_volume}. For the other endpoint, i.e. when $k=1$,
(\ref{eqn:Grinberg}) is an equality for $\mathds{1}_K$, evident from
expressing the integral in spherical coordinates.

For the affine Grassmannian $M_{n,k}$, an inequality of Schneider
\cite{Schneider_flats} states that if $K$ is a convex body in $\R^n$ and
$1\ls k \ls n-1$, then
\begin{equation}
  \label{eqn:Schneider}
  \int_{M_{n,k}}\abs{K\cap F}^{n+1}d\nu_{n,k}(F) \ls
  \frac{\omega_k^{n+1}\omega_{n(k+1)}}{\omega_n^{k+1}\omega_{k(n+1)}}\abs{K}^{k+1};
\end{equation} 
when $k>1$, equality holds if and only if $K$ is an $n$-dimensional
ellipsoid; when $k=1$, equality holds if and only if $K$ is a convex
body, which follows from the classical Crofton formula (e.g.,
\cite[Theorem 5.1.1]{SchneiderWeil}).

While many of the latter inequalities also hold for non-convex sets, the
equality cases require additional care.  Gardner \cite{Gardner_dual}
generalized (\ref{eqn:Grinberg}) and (\ref{eqn:Schneider}), among
other related inequalities, to the class of bounded, Borel measurable
sets with a precise characterization of equality cases, making use of
results due to Pfiefer \cite{Pfiefer}, \cite{Pfiefer90}.  In this
paper, we extend such inequalities to bounded integrable
functions. The analysis of equality cases in the functional setting
rests heavily on their results.

\begin{theorem}
  \label{thm:Busemann}
  Let $1\ls k\ls n-1$ and let $f$ be a non-negative, bounded
  integrable function on $\R^n$.  Then
  \begin{equation}
    \label{eqn:thm:Busemann}
    \int_{G_{n,k}}
    \frac{\left(\int_{E}f(x)dx\right)^{n}}{\norm{f\lvert_E}_{\infty}^{n-k}}
    d\mu_{n,k}(E) \ls \frac{\omega_{k}^n}{\omega_n^{k}}
    \left(\int_{\R^n}f(x)dx\right)^k.
  \end{equation} 
\end{theorem}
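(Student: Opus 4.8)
The plan is to reduce the functional inequality \eqref{eqn:thm:Busemann} to its known set-theoretic counterpart \eqref{eqn:Grinberg} (in its bounded-Borel-set form, due to Gardner) via a layer-cake decomposition combined with a rearrangement step. First I would normalize: by homogeneity we may assume $\norm{f}_{\infty}=1$, and indeed after the affine-invariance results for the quantity in \eqref{eqn:ratioGnk} (which we may assume, being established earlier in the paper) we are free to apply any volume-preserving linear map to $f$. The key structural observation is that for a fixed subspace $E$, writing $f_E := f\vert_E$, the quantity $\left(\int_E f_E\right)^n / \norm{f_E}_\infty^{n-k}$ is exactly the type of ratio that the Busemann--Straus--Grinberg inequality controls when $f_E$ is an indicator; the exponent $n-k$ on the $L^\infty$ norm is precisely what makes the ratio scale-invariant under $f_E \mapsto c f_E$ and dilation-invariant in the right way to match the $\abs{K\cap E}^n$ versus $\abs{K}^k$ bookkeeping.

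The main step is the layer-cake representation $f = \int_0^{\infty} \mathds{1}_{\{f>t\}}\, dt$. For each $t>0$ let $K_t = \{x\in\R^n : f(x)>t\}$, a bounded Borel set with $\abs{K_t}<\infty$. Then $\int_E f_E(x)\,dx = \int_0^\infty \abs{K_t\cap E}\,dt$ and, crucially, $\norm{f_E}_\infty = \sup\{t : \abs{K_t\cap E}>0 \text{ in the appropriate sense}\}$, so that $\{x\in E: f_E(x)>t\} = K_t\cap E$. I would like to write $\left(\int_E f_E\right)^n$ as an $n$-fold integral $\int\cdots\int \abs{K_{t_1}\cap E}\cdots\abs{K_{t_n}\cap E}\,dt_1\cdots dt_n$ and then, after integrating over $G_{n,k}$ and using Fubini, bound the inner Grassmannian integral $\int_{G_{n,k}}\prod_{j=1}^n \abs{K_{t_j}\cap E}\,d\mu_{n,k}(E)$. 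Since $K_{t_1}\supseteq \cdots$ can be reordered so that $K_{t_1}\subseteq\cdots\subseteq K_{t_n}$ when $t_1\geq\cdots\geq t_n$, the product is at most $\abs{K_{t_1}\cap E}^n$ for the smallest set; but that throws away too much. A cleaner route: apply the multilinear/rearrangement machinery behind Gardner's proof directly to the functions $\mathds{1}_{K_{t_j}}$, i.e. use the dual-Steiner-symmetrization (or the Busemann--Straus bilinear) argument level-set by level-set. Concretely, I expect one proves the functional statement by a Steiner-type symmetrization of $f$ itself: replace $f$ by a rotationally symmetric decreasing rearrangement $f^\ast$ with the same distribution function, show this does not decrease the left-hand side of \eqref{eqn:thm:Busemann} (this is where Pfiefer's / Gardner's results on monotonicity of dual mixed volumes under symmetrization get upgraded to functions, presumably handled in an earlier section or lemma), and then compute both sides explicitly for $f^\ast$ — for a radially symmetric $f^\ast$ every $E\in G_{n,k}$ gives the same $\int_E f^\ast_E$ and the inequality becomes an equality (the $k=1$ remark in the excerpt is the prototype of this computation in spherical coordinates).

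The hard part will be the symmetrization/monotonicity step: showing that passing from $f$ to a suitable symmetric rearrangement increases (or preserves) $\int_{G_{n,k}} (\int_E f)^n \norm{f\vert_E}_\infty^{-(n-k)}\,d\mu_{n,k}$. Unlike the set case, the denominator $\norm{f\vert_E}_\infty^{n-k}$ varies with $E$ and must be tracked through the symmetrization; the right framework is to interpret $(\int_E f)^n/\norm{f\vert_E}_\infty^{n-k}$ as $\int_E f$ raised to the appropriate power of a ``dual mixed volume of level sets'' and to invoke the Busemann--Straus bilinear inequality for the symmetrization kernel. An alternative that sidesteps this: fix $E$, apply the set inequality \eqref{eqn:Grinberg} after first reducing (within $E$, dimension $k$) to the case $f\vert_E$ is an indicator via a one-dimensional layer-cake on $E$ combined with the convexity of $t\mapsto t^n$; but care is needed because the resulting bound must be integrated over $G_{n,k}$ and one needs the level sets $K_t$ to be the \emph{same} across all $E$, which they are. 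I would pursue this second approach first since it isolates all the geometry into the single cited inequality \eqref{eqn:Grinberg}, leaving only a Fubini argument and a scalar convexity estimate; the equality analysis would then be deferred to the remarks following the theorem, relying on Gardner's and Pfiefer's characterizations.
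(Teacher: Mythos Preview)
Your proposal does not match the paper's proof and, more importantly, both routes you sketch contain a genuine gap.

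\textbf{What the paper actually does.} The paper does not use layer--cake or a direct reduction to the set inequality~\eqref{eqn:Grinberg}. Instead, Theorem~\ref{thm:Busemann} is obtained as the case $q=k$, $p=n-k$, $f_1=\cdots=f_k=f$ of Theorem~\ref{thm:Grinberg_general}, whose proof runs as follows: on each fixed $E\in G_{n,k}$ one applies the \emph{functional Busemann random simplex inequality} (Corollary~\ref{cor:B}) to bound the ratio $\|f\vert_E\|_1^{1+p/k}/\|f\vert_E\|_\infty^{p/k}$ from above by a multiple of $\Delta^0_p(f\vert_E,\ldots,f\vert_E)$; one then integrates over $G_{n,k}$ and uses the Blaschke--Petkantschin formula (Theorem~\ref{thm:BPGnk}) to convert $\int_{G_{n,k}}\Delta^0_p(\cdot\vert_E)\,dE$ into $\Delta^0_{-(n-k-p)}$ on $\R^n$; a second application of Corollary~\ref{cor:B} (now with a nonpositive exponent) finishes. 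For $p=n-k$ the exponent $-(n-k-p)$ vanishes, so the last step is trivial and one lands directly on $\norm{f}_1^k$. The $\norm{\cdot}_\infty$ in the denominator arises naturally from the normalization in Corollary~\ref{cor:B}, not from any layer--cake bookkeeping.

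\textbf{Why your layer--cake route fails.} Carrying out your ``alternative'' approach explicitly: with $K_t=\{f>t\}$, $M=\norm{f}_\infty$ and $M_E=\norm{f\vert_E}_\infty$, Jensen for $t\mapsto t^n$ gives
\[
\frac{(\int_E f)^n}{M_E^{\,n-k}}\;\ls\; M_E^{\,k-1}\int_0^{M_E}\abs{K_t\cap E}^n\,dt\;\ls\; M^{k-1}\int_0^{M}\abs{K_t\cap E}^n\,dt,
\]
and after integrating over $G_{n,k}$ and applying \eqref{eqn:Grinberg} level-by-level one obtains
\[
\text{LHS of \eqref{eqn:thm:Busemann}}\;\ls\;\frac{\omega_k^n}{\omega_n^k}\,M^{k-1}\int_0^M \abs{K_t}^k\,dt.
\]
To close the argument you would need $M^{k-1}\int_0^M\abs{K_t}^k\,dt\ls\bigl(\int_0^M\abs{K_t}\,dt\bigr)^k$, but Jensen for $t\mapsto t^k$ gives precisely the \emph{reverse} inequality. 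So the convexity of $t\mapsto t^n$ you invoke is exactly what makes the final comparison go the wrong way; there is no scalar convexity estimate that repairs this.

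\textbf{Why your symmetrization route is incomplete.} Even granting that the left side of \eqref{eqn:thm:Busemann} increases when $f$ is replaced by $f^\ast$ (which you have not shown, and which is not obvious because of the $E$-dependent denominator), your claim that for radially symmetric $f^\ast$ ``the inequality becomes an equality'' is false. For such $f^\ast$ both $\int_E f^\ast$ and $\norm{f^\ast\vert_E}_\infty=f^\ast(0)$ are independent of $E$, so \eqref{eqn:thm:Busemann} reduces to a one-dimensional inequality in the radial profile $g(r)=f^\ast(r\theta)$; but this inequality is \emph{strict} unless $g$ is (a.e.) a multiple of an indicator $\mathds{1}_{[0,R]}$, consistent with the equality case stated after Theorem~\ref{thm:Busemann}. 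Thus after symmetrization you still face a nontrivial functional inequality, and your proposal gives no mechanism to prove it. The paper's route via Corollary~\ref{cor:B} is precisely what supplies this missing ``radial'' step, since that corollary already handles the passage from general $f$ to indicators of balls.
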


We also discuss the equality cases in the latter theorem under a mild
assumption on $f$ in which case equality holds in
(\ref{eqn:thm:Busemann}) when $k>1$ if and only if $f =
a\mathds{1}_{\mathcal{E}}$ a.e., where $\mathcal{E}$ is an
origin-symmetric ellipsoid and $a$ is a positive
constant. Furthermore, we prove a more genaral statement for $q\ls k$
different functions, as well as different powers (see Section
\ref{section:main_results}).

The corresponding result on $M_{n,k}$ is the following inequality.

\begin{theorem}
  \label{thm:Schneider}
  Let $1\ls k\ls n-1$ and let $f$ be non-negative, bounded integrable
  function on $\R^n$.  Then
  \begin{equation}
    \label{eqn:thm:Schneider}
    \int_{M_{n,k}} \frac{\left(\int_{F}f(x)dx\right)^{n+1}}
        {\norm{f\vert_{F}}_{\infty}^{n-k}}d\nu_{n,k}(F) \ls
        \frac{\omega_k^{n+1}\omega_{n(k+1)}}{\omega_n^{k+1}\omega_{k(n+1)}}
        \left(\int_{\R^n}f(x)dx\right)^{k+1}.
  \end{equation}
\end{theorem}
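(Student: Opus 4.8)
The plan is to deduce the affine Grassmannian inequality from its linear counterpart, Theorem \ref{thm:Busemann}, by integrating in the fibers of the natural map $M_{n,k}\to G_{n,k}$ that sends an affine $k$-flat $F$ to its direction $E=E(F)\in G_{n,k}$. Concretely, a flat $F$ with direction $E$ is of the form $F=E+t$ for a unique $t\in E^\perp$, so the rigid-motion invariant measure factors as $d\nu_{n,k}(F)=c_{n,k}\,d\mu_{n,k}(E)\,dt$ over $E\in G_{n,k}$ and $t\in E^\perp$, where $c_{n,k}$ is an explicit normalizing constant (this is the standard disintegration of $\nu_{n,k}$; see Section \ref{section:prelim}). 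Thus
\begin{equation}
  \label{eqn:disint}
  \int_{M_{n,k}}\frac{\left(\int_F f\right)^{n+1}}{\norm{f\vert_F}_\infty^{n-k}}d\nu_{n,k}(F)
  = c_{n,k}\int_{G_{n,k}}\left(\int_{E^\perp}\frac{\left(\int_{E+t}f\right)^{n+1}}{\norm{f\vert_{E+t}}_\infty^{n-k}}\,dt\right)d\mu_{n,k}(E).
\end{equation}
For fixed $E$, introduce the marginal $g_E(t):=\int_{E+t}f(x)\,dx$ for $t\in E^\perp$; note $\int_{E^\perp}g_E = \int_{\R^n}f$ by Fubini.

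The key step is to control the inner integral over $E^\perp$. Since $\norm{f\vert_{E+t}}_\infty \le \norm{f}_\infty$ is too crude, the natural move is to apply the \emph{one-codimensional} (or rather, $k$-to-$(k{+}1)$) case of the Busemann-type bound, Theorem \ref{thm:Busemann}, but used $E^\perp$-fiberwise; however the cleaner route is a direct Hölder/slicing argument. Observe $\int_{E+t}f \le \norm{f\vert_{E+t}}_\infty^{?}$-type estimates are the wrong shape; instead write, for each $t$,
\begin{equation}
  g_E(t)^{n+1} = g_E(t)^{n}\cdot g_E(t),
\end{equation}
bound $g_E(t)^n/\norm{f\vert_{E+t}}_\infty^{n-k}$ using the fact that, on the single $k$-plane $E+t$, one has $\left(\int_{E+t}f\right)^n \le \norm{f\vert_{E+t}}_\infty^{n-k}\cdot(\text{something involving } \int_{E+t}f)$ — more precisely, apply Theorem \ref{thm:Busemann} to the function $x\mapsto f(x)$ restricted and re-centered so that its $(k{+}1)$-dimensional ambient is $E^\perp$-translates, reducing (\ref{eqn:disint}) to an integral of the form $\int_{E^\perp} g_E(t)\,\Psi(t)\,dt$ against the "$k$-in-$(k+1)$" extremal constant. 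Then one integrates the remaining factor over $t\in E^\perp$ and invokes Theorem \ref{thm:Busemann} a second time (now genuinely on $G_{n,k}$) with the function $f$ to replace $\int_{G_{n,k}}(\cdots)^k d\mu_{n,k}(E)$ by $(\int_{\R^n}f)^k$; combined with $\int_{E^\perp}g_E=\int f$ this yields the power $k+1$ on the right-hand side. The constant is then assembled by tracking $c_{n,k}$, $\omega_k^n/\omega_n^k$ from Theorem \ref{thm:Busemann}, and the ratio of volumes of balls coming from the $E^\perp$-integration, which should collapse to $\omega_k^{n+1}\omega_{n(k+1)}/(\omega_n^{k+1}\omega_{k(n+1)})$ precisely because equality throughout holds for $f=\mathds{1}_{B_2^n}$ — so one may \emph{calibrate} the constant by testing on the Euclidean ball rather than computing every $\omega$ by hand.

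The main obstacle is getting the two applications of Theorem \ref{thm:Busemann} to interlock with the correct homogeneity: a naive single application gives the wrong exponent, and one must be careful that the "inner" slicing is genuinely an instance of the already-proved inequality (possibly its $q=1$, shifted-dimension form, or the affinely-invariant reformulation via the invariance property established earlier in the paper) and not a circular appeal to Theorem \ref{thm:Schneider} itself. A clean way to avoid circularity is to prove (\ref{eqn:thm:Schneider}) by the same Lie-theoretic affine-invariance argument used for (\ref{eqn:ratioMnk}): first reduce to the isotropic position of $f$ using the affine invariance of the left-hand side (the right-hand side being manifestly affine-invariant as well, since $\int f$ is), and then in isotropic position compare against the Euclidean ball via a symmetrization/rearrangement step, exactly as in Gardner's treatment \cite{Gardner_dual} of (\ref{eqn:Schneider}) for Borel sets, now upgraded to the level set decomposition $f=\int_0^\infty \mathds{1}_{\{f>s\}}\,ds$ together with a Minkowski-type inequality in $L^{n+1}$ for the map $s\mapsto \abs{\{f>s\}\cap F}$. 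Tracking the equality conditions through the layer-cake/rearrangement then reproduces the stated extremizers, but for the inequality alone the affine-reduction-plus-ball-comparison is the cleanest path.
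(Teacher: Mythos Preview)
Your proposal is not a proof; it is a sequence of heuristics whose key steps are left unjustified, and you yourself flag the main obstacle without resolving it. The attempt to reduce to Theorem~\ref{thm:Busemann} by disintegrating over $E^\perp$ and then applying the linear inequality twice does not close: Theorem~\ref{thm:Busemann} controls quantities of the form $\int_{G_{n,k}}(\int_E f)^n/\norm{f\vert_E}_\infty^{n-k}\,dE$, and there is no evident way to make this act on the inner $t$-integral in \eqref{eqn:disint} so as to produce the correct exponent $n+1$ while absorbing the flat-dependent denominator $\norm{f\vert_{E+t}}_\infty^{n-k}$. Your vague ``re-centering so that the $(k{+}1)$-dimensional ambient is $E^\perp$-translates'' does not correspond to any instance of Theorem~\ref{thm:Busemann}. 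The alternative layer-cake/Minkowski route has the same defect: the Minkowski inequality in $L^{n+1}$ handles the numerator $(\int_F f)^{n+1}$, but the denominator $\norm{f\vert_F}_\infty^{n-k}$ varies with $F$ and is not constant on the level sets $\{f>s\}$, so you cannot simply pull it through the $s$-integral or the $L^{n+1}$ norm. No step in the proposal addresses this, and ``calibrating on the ball'' only fixes constants once a valid chain of inequalities exists.

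The paper's argument is structurally different and avoids these issues entirely. It does \emph{not} reduce to Theorem~\ref{thm:Busemann}. Instead, for each fixed flat $F\in M_{n,k}$ it applies a \emph{pointwise} functional random-simplex inequality (Corollary~\ref{cor:A}, a functional form of Groemer's inequality, with $p=n-k$ on the $k$-dimensional space $F$) to obtain
\[
\frac{(\int_F f)^{n+1}}{\norm{f\vert_F}_\infty^{n-k}}
\;\le\;
\omega_k^{n+1}\,
\frac{\Delta_{n-k}(f\vert_F,\ldots,f\vert_F)}
     {\Delta_{n-k}(\mathds{1}_{B_2^k},\ldots,\mathds{1}_{B_2^k})},
\]
and then integrates this over $M_{n,k}$ using the affine Blaschke--Petkantschin formula (Theorem~\ref{thm:BPMnk}), which collapses $\int_{M_{n,k}}\Delta_{n-k}(f\vert_F,\ldots,f\vert_F)\,dF$ directly to a constant times $(\int_{\R^n} f)^{k+1}$. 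The constant is then read off from the equality case $f=\mathds{1}_{B_2^n}$. The essential idea you are missing is that the troublesome ratio $(\int_F f)^{n+1}/\norm{f\vert_F}_\infty^{n-k}$ is bounded \emph{flat by flat} by a random-simplex moment, and it is the Blaschke--Petkantschin identity, not a second isoperimetric inequality, that performs the integration over $M_{n,k}$.
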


Under a mild assumption on $f$, we also prove that equality holds in
(\ref{eqn:thm:Schneider}) when $k>1$ if and only if $f =
a\mathds{1}_{\mathcal{E}}$ a.e., where $\mathcal{E}$ is an ellipsoid
and $a$ is a positive constant.


One can interpret Theorem \ref{thm:Schneider} as an inequality about
the $k$-plane transform.  Recall that the $k$-plane transform
$T_{n,k}$ applied to a function $f$ on $\R^n$ is defined by
\begin{equation}
  T_{n,k}(f)(F) = \int_F f(x) dx \quad (F\in M_{n,k}).
\end{equation}
When $k=n-1$, $T_{n,k}$ is the Radon transform and when $k=1$, it is
the X-ray transform.  The $k$-plane transform satisfies several key
inequalites.  In particular, for each $q\in [1,n+1]$, there is a
unique $p\in [1,(n+1)/(k+1)]$ such that
\begin{equation}
  \label{eqn:Christ_pq}
  \norm{T_{n,k}(f)}_q \ls C(n,k,q) \norm{f}_p
\end{equation} 
for all $f\in L_p$. The latter is a special case of a result due to
Christ \cite{Christ_kplane}, extending work by Drury
\cite{Drury_kplane}; see also the article of Baernstein and Loss
\cite{BaernsteinLoss_kplane} for related work and a conjecture about
the extremal functions; for recent research in this direction, see
Christ \cite{Christ_Radon}, Druout \cite{Druout_kplane} and Flock
\cite{Flock_kplane} and the references therein.  The endpoint
inequality $q=n+1$ and $p=(n+1)/(k+1)$ in (\ref{eqn:Christ_pq}) also
satisfies an affine-invariance property \cite{Christ_Radon},
\cite{Druout_kplane}.

{\bf Organization: } We close the introduction with a few words on the
main tools that we use and the organization of the paper.  Section
\ref{section:prelim} is reserved for notation and background results,
including formulas from integral geometry such as the
Blaschke-Petkantschin formulas.  In Section \ref{section:affine}, we
treat affine invariance using the structure of semi-simple Lie groups
which we then specialize to the Grassmannian and affine
Grassmannian. In Section \ref{section:functional}, we recall a
functional version of Busemann's random simplex inequality
\cite{Busemann_volume}, and its variant due to Groemer
\cite{Groemer_simplex}, \cite{Groemer_polytope}, from
\cite{PaoPiv_probtake}; the latter makes essential use of Christ's
form \cite{Christ_kplane} of the Rogers-Brascamp-Lieb-Luttinger
inequality \cite{Rogers_single}, \cite{BLL}.  The ratios in
(\ref{eqn:ratioGnk}) and (\ref{eqn:ratioMnk}) arise naturally in a
suitable normalized form of the main inequality in
\cite{PaoPiv_probtake}.  In Section \ref{section:main_results}, we
prove Theorems \ref{thm:Busemann} and \ref{thm:Schneider}. We finish
the paper in Section \ref{section:marginals} with a more general
version of Theorem \ref{thm:marginal_intro} and we discuss connections
to the Hyperplane Conjecture from convex geometry.


\section{Preliminaries}
\label{section:prelim}

The setting is $\R^n$ with the canonical inner-product $\langle \cdot,
\cdot \rangle$, Euclidean norm $\abs{\cdot}$ and standard unit vector
basis $e_1,\ldots,e_{n}$.  We also use $\abs{\cdot}$ for Lebesgue
measure and the absolute value of a scalar, the use of which will be
clear from the context. The Euclidean ball of radius one is $B_2^n$
with volume $\omega_n=\abs{B_2^n}$.  We reserve $D_n$ for the
Euclidean ball of volume one, i.e., $D_n=r_n B_2^n$, where
$r_n=\omega_n^{-1/n}$.  The unit sphere is $S^{n-1}$ and is equipped
with the Haar probability measure $\sigma$. As mentioned, the Haar
probability measure on the Grassmannian $G_{n,k}$ is denoted by
$\mu_{n,k}$.  The affine Grassmannian $M_{n,k}$ is equipped with a
measure as follows: for $A\subset M_{n,k}$,
\begin{equation}
  \nu_{n,k}(A) = \int_{G_{n,k}} \abs{\{x\in E^{\perp}: x+E \in
    A\}}d\mu_{n,k}(E).
\end{equation}  Henceforth,
we will write simply $dF$ rather than $d\nu_{n,k}(F)$ for integrals
over $M_{n,k}$; similarly, $dE$ instead of $d\mu_{n,k}(E)$
for integrals on $G_{n,k}$. Note that $\mu_{n,k}$ is a probability
measure while $\nu_{n,k}$ is normalized so that $\nu_{n,k}(\{F\in
M_{n,k}:F\cap B_2^n\not=\emptyset\}) = \omega_{n-k}$. We use
$c_1,c_2,C,\ldots$ etc for positive numeric constants.

We will make use of the following integral geometric identities, often
referred to as the Blaschke-Petkantschin formulas; see e.g.,
\cite[Chapter 7.2]{SchneiderWeil}, \cite{Drury_kplane},\cite[Lemmas 5.1 \&
  5.5]{Gardner_dual}, \cite{Miles}; see also the generalization given
in \cite[Appendix A]{E_Milman_intersection}.

\begin{theorem} 
  \label{thm:BPGnk}
  Let $1\ls q\ls k\ls n$. Suppose that $G$ is a non-negative, Borel
  measurable function on $(\R^n)^q$.  Then
  \begin{eqnarray*}
    \lefteqn{\int_{(\R^n)^q} G(x_1,\ldots,x_q)dx_1\ldots dx_q} \\ & &
    = c_{n,k,q} \int_{G_{n,k}}\int_{E^q} G(x_1,\ldots,x_q)
    \abs{\mathop{\rm conv}\{0,x_1,\ldots,x_q\}}^{n-k} dx_1\ldots dx_q dE,
  \end{eqnarray*}
where 
\begin{equation}
  \label{eqn:cnkq}
  c_{n,k,q} = (q!)^{n-k} \frac{\omega_{n-q+1}\cdots
    \omega_n}{\omega_{k-q+1}\cdots \omega_{k}}.
\end{equation}
\end{theorem}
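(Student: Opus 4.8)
The plan is to prove first the ``linear'' case $k=q$ of the identity — where a $q$-tuple of points is reparametrised by the $q$-dimensional subspace it spans together with its coordinates inside that subspace — and then to deduce the general case $1\ls q\ls k\ls n$ by iterating the linear case, using the standard fact that a uniformly random $q$-dimensional subspace of $\R^n$ is obtained by first choosing a uniformly random $k$-dimensional subspace $E$ and then a uniformly random $q$-dimensional subspace inside $E$.

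For the case $k=q$ I would apply the smooth coarea formula to the span map $T\colon(\R^n)^q\to G_{n,q}$ sending a rank-$q$ tuple to the $q$-dimensional subspace it generates; this is a submersion away from the Lebesgue-null set of rank-deficient tuples. It disintegrates Lebesgue measure on $(\R^n)^q$ over $G_{n,q}$: the fibre over $L$ is the set of rank-$q$ tuples in $L^q$, carrying the ordinary Lebesgue measure $dx_1\cdots dx_q$ (after identifying $L\cong\R^q$), weighted by the reciprocal of the coarea Jacobian of $T$. This Jacobian is computed directly. Working in coordinates with $L=\R^q\times\{0\}$ and writing a tuple in $L^q$ as an invertible $q\times q$ matrix $Y$ with columns $x_1,\ldots,x_q$, a perturbation $x_i\mapsto x_i+\eps v_i$ with $v_i=(a_i,b_i)\in\R^q\times\R^{n-q}$ moves the span to the graph over $L$ of the map $\R^q\to L^\perp$ with matrix $\eps BY^{-1}+O(\eps^2)$, where $B$ is the $(n-q)\times q$ matrix with columns $b_1,\ldots,b_q$; thus $DT$ sends $(v_1,\ldots,v_q)$ to $BY^{-1}\in\mathrm{Hom}(\R^q,\R^{n-q})\cong T_LG_{n,q}$, its kernel is the $q^2$-dimensional space of directions with all $b_i=0$ (tangent to the fibre), and on the orthogonal complement $DT$ is right multiplication by $Y^{-1}$, which on $(n-q)\times q$ matrices has Jacobian $\abs{\det{Y}}^{-(n-q)}$. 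Hence at a tuple $x$ in the fibre $L^q$ the coarea Jacobian equals $\abs{\det{Y}}^{-(n-q)}$, up to a constant fixing the normalisation of $\mu_{n,q}$. Since $\abs{\det{Y}}$ is the volume of the parallelepiped spanned by $x_1,\ldots,x_q$, equal to $q!\,\abs{\conv{0,x_1,\ldots,x_q}}$, the weight in the disintegration is proportional to $\abs{\conv{0,x_1,\ldots,x_q}}^{n-q}$ and the power $(q!)^{n-q}$ is absorbed into $c_{n,q,q}$. The remaining numerical constant is pinned down by testing both sides on a radial product function $G(x_1,\ldots,x_q)=\prod_i g(\abs{x_i})$: the left side is $\bigl(\int_{\R^n}g(\abs{x})\,dx\bigr)^q$ while the right side reduces to a known moment of the volume of a random simplex, computable in closed form; this yields the value of $c_{n,q,q}$ as in (\ref{eqn:cnkq}) (cf.\ \cite[Chapter 7.2]{SchneiderWeil}, \cite{Drury_kplane}, \cite{Miles}, \cite[Lemmas 5.1 \& 5.5]{Gardner_dual}).

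For the general case I would begin from the $k=q$ identity in $\R^n$ and invoke the disintegration of the uniform subspace: a uniform $L\in G_{n,q}$ is generated by first sampling $E\in G_{n,k}$ from $\mu_{n,k}$ and then a uniform $q$-dimensional $L\subseteq E$, since the iterated law is $\mathrm{O}(n)$-invariant and a probability measure, hence equals $\mu_{n,q}$. Thus the right-hand side of the $k=q$ identity equals
\[
c_{n,q,q}\int_{G_{n,k}}\Bigl(\,\int_{L\subseteq E}\int_{L^q} G(x)\,\abs{\conv{0,x_1,\ldots,x_q}}^{n-q}\,dx\,dL\Bigr)\,dE,
\]
the inner integral running over the $q$-dimensional subspaces $L$ of the fixed $E\cong\R^k$ with their invariant probability measure. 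For each such $E$, applying the $k=q$ identity a second time inside $E$ to the function $x\mapsto G(x)\,\abs{\conv{0,x_1,\ldots,x_q}}^{n-k}$, and using $n-q=(n-k)+(k-q)$ together with the fact that $\abs{\conv{0,x_1,\ldots,x_q}}$ is the same whether computed in $E$ or in $\R^n$, gives
\[
\int_{L\subseteq E}\int_{L^q} G(x)\,\abs{\conv{0,x_1,\ldots,x_q}}^{n-q}\,dx\,dL=\frac{1}{c_{k,q,q}}\int_{E^q} G(x)\,\abs{\conv{0,x_1,\ldots,x_q}}^{n-k}\,dx.
\]
Combining the last two displays proves the theorem with $c_{n,k,q}=c_{n,q,q}/c_{k,q,q}$, which unwinds to the expression in (\ref{eqn:cnkq}).

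The main obstacle is the case $k=q$: the explicit coarea and Jacobian computation for the span map, and the closed-form evaluation of the resulting proportionality constant. The bootstrap to $q\ls k$ is purely formal once that is in place, apart from routine care with the Lebesgue-null exceptional sets — rank-deficient tuples, and tuples whose span fails to be $q$-dimensional — which contribute nothing to either side.
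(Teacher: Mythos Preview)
Your proposal is a correct outline of a standard proof of the linear Blaschke--Petkantschin formula: the coarea argument for the span map gives the case $k=q$ with the right Jacobian $\abs{\det Y}^{-(n-q)}$, and the bootstrap via the nested Grassmannians $G_{n,q}\leftarrow G_{n,k}$ together with the factorisation $\abs{\conv{0,x_1,\ldots,x_q}}^{n-q}=\abs{\conv{0,x_1,\ldots,x_q}}^{n-k}\cdot\abs{\conv{0,x_1,\ldots,x_q}}^{k-q}$ is exactly how one passes to general $q\ls k$; the check $c_{n,k,q}=c_{n,q,q}/c_{k,q,q}$ is correct.

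Note, however, that the paper does \emph{not} give its own proof of this theorem. It is stated in the Preliminaries as a known integral-geometric identity, with references to \cite[Chapter 7.2]{SchneiderWeil}, \cite{Drury_kplane}, \cite[Lemmas 5.1 \& 5.5]{Gardner_dual}, \cite{Miles}, and \cite[Appendix A]{E_Milman_intersection}. Your argument is essentially the textbook derivation found in those sources (particularly Schneider--Weil), so there is nothing to compare against here beyond observing that you have reconstructed the standard proof. The one place to be slightly careful in a full write-up is the normalisation: the coarea formula naturally produces the $\mathrm{O}(n)$-invariant measure on $G_{n,q}$ with a specific total mass, not the probability measure $\mu_{n,q}$, and your ``test function'' step is precisely what absorbs that discrepancy into $c_{n,q,q}$.
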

\pagebreak
\begin{theorem}
  \label{thm:BPMnk}
  Let $1\ls q\ls k\ls n$. Suppose that $G$ is a non-negative Borel
  function on $(\R^n)^{q+1}$. Then
  \begin{eqnarray*}
    \lefteqn{\int_{(\R^n)^{q+1}} G(x_1,\ldots,x_{q+1})dx_1\ldots dx_{q+1}} \\
    & & =c_{n,k,q}
    \int_{M_{n,k}}\int_{F^{q+1}} G(x_1,\ldots,x_{q+1})
    \abs{\mathop{\rm conv}\{x_1,\ldots,x_{q+1}\}}^{n-k} dx_1\ldots dx_{q+1} dF,
  \end{eqnarray*}where $c_{n,k,q}$ is defined in (\ref{eqn:cnkq}).
\end{theorem}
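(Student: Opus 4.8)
\textbf{Proof plan for Theorem \ref{thm:BPMnk}.}

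The plan is to derive the affine Blaschke--Petkantschin formula on $M_{n,k}$ from the linear one on $G_{n,k+1}$ (Theorem \ref{thm:BPGnk}), using the fact that a $k$-flat $F$ together with the origin spans (generically) a $(k+1)$-dimensional linear subspace. First I would reduce to the case $q = k$ by a routine Fubini argument: if the identity holds for functions of $k+1$ vectors, then for $q < k$ one applies it to $G(x_1,\ldots,x_{q+1})$ regarded (after integrating out the remaining $k - q$ variables on the flat $F$) as a function producing the lower-dimensional simplex volume; the constant $c_{n,k,q}$ depends on $q$ only through the stated product of $\omega$'s, and the bookkeeping matches because $\abs{\conv{x_1,\ldots,x_{q+1}}}$ is exactly the volume of the face spanned by the first $q+1$ points. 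So the heart of the matter is the case of $k+1$ points in general position.

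Second, I would set up the coarea/disintegration. Given $x_1,\ldots,x_{k+1}\in\R^n$ in general position, they lie in a unique $k$-flat $F = F(x_1,\ldots,x_{k+1})$. Write $F = E^{\perp}\cap(E^{\perp}+t)$... more precisely, decompose $F$ as $x_{k+1} + L$ where $L$ is the linear span of $x_i - x_{k+1}$, a $k$-dimensional linear subspace, and let $p$ be the foot of the perpendicular from $0$ to $F$, so $p\in E^{\perp}$ where $E = L$. The map $(x_1,\ldots,x_{k+1})\mapsto(F; x_1,\ldots,x_{k+1}\in F)$ is then the fibration over $M_{n,k}$ whose Jacobian we must compute. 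The clean way to get this Jacobian is to lift to $G_{n,k+1}$: let $\widetilde E = \mathrm{span}(F\cup\{0\})$, a $(k+1)$-dimensional linear subspace for a.e. choice of points. Apply Theorem \ref{thm:BPGnk} with $k$ replaced by $k+1$ and $q = k+1$ to the integrand $G(x_1,\ldots,x_{k+1})$ on $(\R^n)^{k+1}$:
\begin{equation*}
  \int_{(\R^n)^{k+1}} G\, dx_1\cdots dx_{k+1}
  = c_{n,k+1,k+1}\int_{G_{n,k+1}}\int_{\widetilde E^{\,k+1}} G\,
  \abs{\conv{0,x_1,\ldots,x_{k+1}}}^{n-k-1} dx_1\cdots dx_{k+1}\, d\widetilde E .
\end{equation*}
Then, inside each fixed $\widetilde E\cong\R^{k+1}$, I would apply the \emph{affine} Blaschke--Petkantschin formula \emph{in dimension $k+1$} (the classical one, where $n-k$ there equals $1$, essentially Crofton), which expresses integration over $(k+1)$ points in $\R^{k+1}$ as integration over $k$-flats $F\subset\R^{k+1}$ with weight the $(k+1)$-point simplex volume $\abs{\conv{x_1,\ldots,x_{k+1}}}$ to the first power, against the $k$-point volume measure on $F$. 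Combining, the total weight on $\abs{\conv{x_1,\ldots,x_{k+1}}}$ becomes $(n-k-1) + 1 = n-k$ as desired, and $\abs{\conv{0,x_1,\ldots,x_{k+1}}}$ relates to the distance $d(0,F)$ times $\abs{\conv{x_1,\ldots,x_{k+1}}}$ up to a dimensional factor of $k+1$, which is precisely where the $(q!)^{n-k}$-type factor and the ratio $\omega_{n-k}$-versus-base-case constants get absorbed.

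Third, I would match the measure $\nu_{n,k}$ with what this two-step reduction produces: the resulting measure on $k$-flats in $\R^n$ is, by construction, the push-forward of (Haar on $G_{n,k+1}$) $\times$ (affine measure on $k$-flats inside $\R^{k+1}$), and one checks this coincides with $\nu_{n,k}$ as defined in Section \ref{section:prelim} by verifying the normalization on $\{F : F\cap B_2^n\neq\emptyset\}$, or equivalently by uniqueness of the rigid-motion-invariant measure on $M_{n,k}$ up to scalar, then pinning the scalar. Finally I would assemble the constant: track $c_{n,k+1,k+1}$, the constant from the $(k+1)$-dimensional affine formula (a ratio of $\omega$'s in dimensions up to $k+1$), and the Jacobian factor $k+1$ from $\abs{\conv{0,x_1,\ldots,x_{k+1}}} = \tfrac{1}{k+1}\,d(0,F)\,\abs{\conv{x_1,\ldots,x_{k+1}}}\cdot(\text{something})$ — here I should be careful: $\abs{\conv{0,x_1,\ldots,x_{k+1}}}$ is the volume of a $(k+1)$-simplex, equal to $\tfrac{1}{k+1}\cdot(\text{base }\abs{\conv{x_1,\ldots,x_{k+1}}})\cdot(\text{height }d(0,F))$. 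Collecting everything should reproduce $c_{n,k,q} = (q!)^{n-k}\,\omega_{n-q+1}\cdots\omega_n / (\omega_{k-q+1}\cdots\omega_k)$ with $q = k$; the cases $q<k$ then follow from the first paragraph.

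\textbf{Main obstacle.} The routine-looking but genuinely delicate step is the constant bookkeeping: one must cleanly separate ``distance from $0$ to the flat'' from ``$k$-dimensional volume measure on the flat,'' verify that the $(k+1)$-dimensional affine Blaschke--Petkantschin constant telescopes correctly against $c_{n,k+1,k+1}$, and confirm the $\nu_{n,k}$-normalization matches (the problem's normalization $\nu_{n,k}(\{F\cap B_2^n\neq\emptyset\}) = \omega_{n-k}$ is exactly the kind of fact that fixes the otherwise-ambiguous scalar). An alternative, possibly cleaner, route avoids the lift to $G_{n,k+1}$ entirely: directly disintegrate $(\R^n)^{q+1}\to M_{n,k}$ by writing each $x_i = p + y_i$ with $p\in E^\perp$ the foot of the perpendicular and $y_i\in E$, compute the Jacobian of $(x_1,\ldots,x_{q+1})\mapsto(E, p, y_1,\ldots,y_{q+1})$ by a Cauchy--Binet / Gram-determinant computation — the Jacobian is a power of the simplex volume times a constant — and then recognize $(E,p)$ as coordinatizing $M_{n,k}$ with measure $\nu_{n,k}$. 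I would present whichever of these makes the constant most transparent, citing \cite[Chapter 7.2]{SchneiderWeil} and \cite[Lemmas 5.1 \& 5.5]{Gardner_dual} for the underlying Jacobian identity and deferring the equality-case-irrelevant measure-zero sets to a standard general-position argument.
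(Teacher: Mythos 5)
The paper does not prove Theorem~\ref{thm:BPMnk}; it is quoted as a standard Blaschke--Petkantschin identity with pointers to \cite[Chapter 7.2]{SchneiderWeil}, \cite{Drury_kplane}, \cite[Lemmas 5.1, 5.5]{Gardner_dual} and \cite{Miles}. So there is no in-text proof to compare against; what follows is a check of your outline on its own merits.

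Your lifting route --- apply Theorem~\ref{thm:BPGnk} on $(\R^n)^{k+1}$ with parameters $(n,k+1,k+1)$, then, inside each $(k+1)$-subspace $\widetilde E$, apply the affine Blaschke--Petkantschin formula for hyperplanes in $\R^{k+1}$ --- is the standard textbook derivation and it does work, \emph{but only for $q=k$}. The step you rely on,
$$\abs{\conv{0,x_1,\ldots,x_{q+1}}}=\tfrac{1}{q+1}\,d(0,F)\,\abs{\conv{x_1,\ldots,x_{q+1}}},$$
needs the affine hull of $x_1,\ldots,x_{q+1}$ to be all of $F$. For $q<k$ the points span a $q$-flat strictly inside $F$, the correct height is $d\bigl(0,\mathrm{aff}\{x_1,\ldots,x_{q+1}\}\bigr)\gr d(0,F)$, and it depends on where the points sit inside $F$. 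So the weight $\abs{\conv{0,x}}^{n-k-1}\abs{\conv{x_1,\ldots,x_{q+1}}}$ produced by your two-step reduction does \emph{not} factor as $\abs{\conv{x_1,\ldots,x_{q+1}}}^{n-k}$ times a function of $(\widetilde E,F)$ alone, and the route stalls.

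This makes your opening claim --- that the $q<k$ case follows from $q=k$ ``by a routine Fubini argument'' --- the genuine gap. Padding $G$ with auxiliary variables and invoking the $q=k$ identity leaves a $k$-simplex weight $\abs{\conv{x_1,\ldots,x_{k+1}}}^{n-k}$, which couples all $k+1$ points and cannot be integrated out to produce the $q$-simplex weight $\abs{\conv{x_1,\ldots,x_{q+1}}}^{n-k}$. The references the paper cites handle all $1\ls q\ls k$ at once by the \emph{direct} Jacobian computation --- exactly the ``alternative, possibly cleaner, route'' you mention at the end --- and for general $q$ that is the right approach. Do note, though, that when $q<k$ the assignment $(F;y_1,\ldots,y_{q+1})\mapsto(y_1,\ldots,y_{q+1})$ is not a change of variables but a fibration with fibers of dimension $(k-q)(n-k)$ (the $k$-flats through a fixed $q$-flat), so what is invoked is a coarea formula rather than a bare Gram-determinant substitution; that is where the $q$-dependence of $c_{n,k,q}$ comes from. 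Your constant-matching for the $q=k$ case (the $d(0,F)^{n-k-1}$ from the cone decomposition cancelling against the polar Jacobian $r^{n-k-1}$ when one compares $\nu_{n,k}$ with the pushforward of Haar on $G_{n,k+1}$ times the affine measure on $M_{k+1,k}(\widetilde E)$, plus the normalization $\nu_{n,k}(\{F:F\cap B_2^n\neq\emptyset\})=\omega_{n-k}$) is indeed the surmountable part and is correctly identified.

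One last remark: the paper only ever applies Theorem~\ref{thm:BPMnk} with $q=k$ (in the proof of Theorem~\ref{thm:Schneider_general}), so your plan, executed carefully, suffices for the paper's use of the statement --- but it does not establish the theorem as stated for $q<k$.
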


If $A\subset \R^n$ is a Borel set with finite volume, the symmetric
rearrangement $A^{\ast }$ of $A$ is the (open) Euclidean ball centered
at the origin whose volume is equal to that of $A$.  The symmetric
decreasing rearrangement of $1_A$ is defined by
$(1_A)^{\ast}:=1_{A^{\ast }}$. If $f:{\R}^n\rightarrow {\R}^+$ is an
integrable function, its symmetric decreasing rearrangement $f^{\ast}$
is defined by
\begin{equation*}
  f^{\ast }(x)=\int_0^{\infty }1^{\ast }_{\{ f> t\}}(x)dt
  =\int_0^{\infty }1_{\{ f>t\}^{\ast }}(x)dt.
\end{equation*}
The latter can be compared with the ``layer-cake representation''
of $f$:
\begin{equation}
  \label{eqn:layer_cake}
  f(x)=\int_0^{\infty }1_{\{ f> t\}}(x)dt;
\end{equation}
see \cite[Theorem 1.13]{LL_book}.  The function $f^{\ast}$ is
radially-symmetric, decreasing and equimeasurable with $f$, i.e.,
$\{f>\alpha\}$ and $\{f^*>\alpha\}$ have the same volume for each
$\alpha > 0$.  By equimeasurability one has $\norm{f}_p=\norm{f^*}_p$
for each $1\ls p\ls \infty$, where $\norm{\cdot}_p$ denotes the
$L_p(\R^n)$-norm.  We refer the reader to the book \cite{LL_book} for
further background material on rearrangements of functions.


\section{Affine invariance}
\label{section:affine}

In this section we discuss linear and affine invariance properties of
the quantities in (\ref{eqn:ratioGnk}) and (\ref{eqn:ratioMnk}),
respectively, as well as generalizations. We start with the former
and prove the following theorem.

\begin{theorem}
\label{th_aigr}
Let $m$ be a positive integer and let $p_i, \alpha_i$, for $i=1, \dots, m$ be real numbers.
Let $f_i$ be bounded functions on $\R^n$, $f_i \in L^{p_i}(\R^n)$.
Define 
$$ I(f_1, \dots, f_m) := \int\limits_{G_{n,k}} \prod\limits_{i=1}^m \| f_i|_x \|^{\alpha_i}_{p_i} \,\, dx \, .$$ 
Whenever this quantity is finite and $\sum_{i=1}^m \frac{\alpha_i}{p_i} =n$, for any volume-preserving linear transformation $g$, we have
$$ I(g\cdot f_1, \dots, g\cdot f_m) = I(f_1, \dots, f_m) \, ,$$
where $g\cdot f_i(t)=f_i(g^{-1}t)$.
\end{theorem}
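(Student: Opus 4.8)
The plan is to reduce the claim to a single change-of-variables identity on $G_{n,k}$ governing the transformation of the Haar measure $\mu_{n,k}$ under the linear $\mathrm{GL}(n)$-action, and then to observe that the hypothesis $\sum_i \alpha_i/p_i = n$ is exactly what makes the resulting Jacobian factors cancel. For $g \in \mathrm{GL}(n)$ and a subspace $E \in G_{n,k}$ let $\delta(g,E) > 0$ be the factor by which $g$ scales $k$-dimensional Lebesgue measure on $E$, so that $|g(A)|_k = \delta(g,E)\,|A|_k$ for measurable $A \subseteq E$; one checks that $\delta$ is a cocycle, $\delta(gh,E) = \delta(g,hE)\,\delta(h,E)$. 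First I would record the elementary identity, obtained from the substitution $s = g^{-1}t$,
\[
  \| (g\cdot f_i)|_E \|_{p_i}^{p_i} = \int_E |f_i(g^{-1}t)|^{p_i}\, dt = \delta(g,g^{-1}E)\int_{g^{-1}E} |f_i(s)|^{p_i}\, ds = \delta(g,g^{-1}E)\,\| f_i|_{g^{-1}E} \|_{p_i}^{p_i}.
\]
Setting $\Psi(F) := \prod_{i=1}^m \| f_i|_F \|_{p_i}^{\alpha_i}$, raising this to the power $\alpha_i/p_i$ and multiplying over $i$, the constraint $\sum_i \alpha_i/p_i = n$ collects all the scaling factors into one power and gives $\prod_{i=1}^m \| (g\cdot f_i)|_E \|_{p_i}^{\alpha_i} = \delta(g,g^{-1}E)^{\,n}\,\Psi(g^{-1}E)$, hence $I(g\cdot f_1,\dots,g\cdot f_m) = \int_{G_{n,k}} \delta(g,g^{-1}E)^{\,n}\,\Psi(g^{-1}E)\, dE$.

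It therefore suffices to prove the transformation rule
\[
  \int_{G_{n,k}} h(g^{-1}E)\, dE = \int_{G_{n,k}} h(F)\,\delta(g,F)^{-n}\, dF \qquad (g \in \mathrm{GL}(n),\ |\det g| = 1)
\]
for every non-negative Borel $h$ on $G_{n,k}$: taking $h = \Psi$, its left side is $\int_{G_{n,k}}\Psi(F)\, dF = I(f_1,\dots,f_m)$ while its right side is $I(g\cdot f_1,\dots,g\cdot f_m)$ by the previous paragraph, which is the assertion. This rule is the affine-invariance fact underlying Grinberg's theorem on dual affine quermassintegrals, and I see two routes. The Lie-theoretic one: the set of $g$ for which the rule holds is easily seen, from the cocycle property of $\delta$, to be a subgroup of $\{g : |\det g| = 1\}$, and the Cartan (singular value) decomposition $g = k_1 a k_2$ writes every such $g$ as a product of elements of $\mathrm{O}(n)$ and positive diagonal matrices of determinant one; for $g \in \mathrm{O}(n)$ the rule is immediate ($\mu_{n,k}$ is $\mathrm{O}(n)$-invariant and $\delta(g,F) \equiv 1$), while for diagonal $g$ one computes the Radon--Nikodym cocycle of the action on $G_{n,k} = \mathrm{SL}(n)/P$ from the modular behaviour of the parabolic $P$ stabilising a fixed $k$-plane --- this is where the semisimple structure enters. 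The route I would actually carry out uses the Blaschke--Petkantschin formula (Theorem \ref{thm:BPGnk}) with $q = k$: apply it to $G(x_1,\dots,x_k) := \widetilde G(g^{-1}x_1,\dots,g^{-1}x_k)$, use $|\det g| = 1$ so that the change of variables $y_i = g^{-1}x_i$ on $(\R^n)^k$ has unit Jacobian and re-apply the formula, and then substitute $y_i = g^{-1}x_i$ in the inner integral over $E^k$; this last substitution contributes $\delta(g,g^{-1}E)^{k}$ from the $k$-fold volume element together with $\delta(g,g^{-1}E)^{n-k}$ from the weight $|\operatorname{conv}\{0,x_1,\dots,x_k\}|^{n-k}$, giving $\int_{G_{n,k}}\Psi_0(F)\, dF = \int_{G_{n,k}}\delta(g,g^{-1}E)^{\,n}\,\Psi_0(g^{-1}E)\, dE$, where $\Psi_0$ is the associated weighted integral over $F^k$. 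A test-function argument --- taking $\widetilde G(x_1,\dots,x_k) = h(\operatorname{span}\{x_1,\dots,x_k\})\,\mathds 1_B(x_1)\cdots\mathds 1_B(x_k)$ for a fixed Euclidean ball $B$, so that $\Psi_0 = c_B\, h$ with $c_B$ independent of $F$ by rotational symmetry --- then upgrades this to arbitrary non-negative Borel $h$ and yields the transformation rule.

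I would then assemble the two ingredients. The main obstacle is the transformation rule itself rather than the surrounding bookkeeping: one must justify the test-function reduction (measurability of $F \mapsto \| f_i|_F \|_{p_i}$, and the behaviour at subspaces where such a norm is $0$ or $\infty$, the identity holding in $[0,\infty]$) and arrange matters so that the finiteness hypothesis $I(f_1,\dots,f_m) < \infty$ is invoked only at the final step, so that no cancellation of infinite quantities is required. Everything else --- the cocycle property of $\delta$, the changes of variables in the fibre integrals, and the arithmetic $\sum_i \alpha_i/p_i = n$ --- is routine.
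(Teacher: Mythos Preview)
Your overall strategy matches the paper's: reduce to a change-of-variables identity for the Haar measure on $G_{n,k}$ under $\mathrm{SL}(n)$ and combine it with the scaling identity for $\|(g\cdot f_i)|_E\|_{p_i}$. Your $\delta(g,E)$ is exactly the paper's $|J_g(E)|=a(gE)$, and your first (Lie-theoretic) route is a sketch of what the paper carries out in \S\S3.1--3.3, ultimately quoting Helgason for Lemma~\ref{lemma_antcvf}.

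There is one bookkeeping slip. With $h=\Psi$, the left side of your stated rule is $\int_{G_{n,k}}\Psi(g^{-1}E)\,dE$, not $\int_{G_{n,k}}\Psi(F)\,dF$, and the right side is $\int_{G_{n,k}}\Psi(F)\,\delta(g,F)^{-n}\,dF$, not $I(g\cdot f_1,\dots,g\cdot f_m)$. What your Blaschke--Petkantschin computation actually produces is the equivalent form
\[
\int_{G_{n,k}} h(F)\,dF \;=\; \int_{G_{n,k}} \delta(g,g^{-1}E)^{\,n}\,h(g^{-1}E)\,dE,
\]
and taking $h=\Psi$ \emph{there} gives $I(f_1,\dots,f_m)$ on the left and $I(g\cdot f_1,\dots,g\cdot f_m)$ on the right, exactly as you want; alternatively, keep your stated rule but apply it with $h(F)=\Psi(F)\,\delta(g,F)^{n}$. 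Either way the argument closes.

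Your second route---deriving the transformation rule from Theorem~\ref{thm:BPGnk} with $q=k$ and the test function $\widetilde G(x_1,\dots,x_k)=h(\mathrm{span}\{x_1,\dots,x_k\})\prod_i\mathds 1_B(x_i)$ with $B$ an origin-centred ball---is genuinely different from the paper's proof and more elementary: it replaces all of the semisimple structure theory of \S\S3.1--3.2 and the citation of Helgason by an integral-geometric identity already available in \S\ref{section:prelim}. The paper's Lie-theoretic setup, in exchange, provides a single lemma (Lemma~\ref{lemma_antcvf}) on $G/P$ that is reused verbatim for the affine Grassmannian in the proof of Theorem~\ref{th_aiagr}; your approach would handle $M_{n,k}$ equally well via Theorem~\ref{thm:BPMnk}, but the test-function step has to be rerun there.
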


\begin{remark}
Letting $p_i, \alpha_i, q_i, \beta_i$ satisfy $\sum_{i=1}^m \left(\frac{\alpha_i}{p_i} - \frac{\beta_j}{q_j}\right) =n$, Theorem \ref{th_aigr} yields the linear invariance of 
$$ \int\limits_{G_{n,k}} \prod\limits_{i=1}^m  \frac{ \| f_i|_x \|^{\alpha_i}_{p_i} }{\| f_i|_x \|^{\beta_i}_{q_i}}\,\, dx \, ,$$
and letting $q_i \rightarrow \infty$, also of
$$ \int\limits_{G_{n,k}} \prod\limits_{i=1}^m  \frac{ \| f_i|_x \|^{\alpha_i}_{p_i} }{\| f_i|_x \|^{\beta_i}_{\infty}}\,\, dx \, .$$
Note that in the latter case there are no restriction on the $\beta_i$'s.
\end{remark}

Grinberg's approach \cite{Grinberg}, which in turn draws on
Furstenberg-Tzkoni \cite{FT}, can be adapted to our setting, although
we prefer to give a more self-contained proof using the structure of
semi-simple Lie groups. For this reason, the notation in this section
differs somewhat from the rest of the paper.

\subsection{Semi-simple Lie groups}
We recall some basic facts from the theory of semi-simple Lie groups
as needed for our later discussion about the Grassmannian manifold. We
follow the presentation from \cite{OlafssonPasquale_cos}. Further
information and details about this topic can be found for example in
\cite{Knapp}.

Let $G$ be a non-compact connected semi-simple Lie group with a finite center. We denote its Lie algebra by $\fg$. An involution $\theta:G \to G$ is called a Cartan involution if $K=G^{\theta}:=\{a\in G \: \theta(a)=a \}$ is a maximal compact subgroup of $G$. In this case $K$ is connected. We fix a Cartan involution $\theta$ on $G$ and the corresponding maximal compact subgroup $K$. The derived involution $\dot{\theta} \: \fg \to \fg$ will also be denoted by $\theta$. We have $\fg = \fk \oplus \fs$, where $\fk = \fg^{\theta}$ is the Lie algebra of $K$ and $\fs = \{X\in \fg \: \theta(X)=-X \}$. Moreover, $ [\fk, \fk] \subset \fk, [\fs, \fs] \subset \fk, [\fk, \fs] \subset \fs$. By $\Ad$ and $\ad$, we denote the adjoint representation of the Lie group $G$ and of the Lie algebra $\fg$, respectively. The Killing form on $\fg$ is given by $\left\langle X, Y \right\rangle := \Tr(\ad(X) \ad(Y))$. And the product 
$$ (X,Y) := - \left\langle X, \theta(Y) \right\rangle $$ 
is an inner product on $\fg$. Note that $\ad(X)^{\ast}=-\ad(\theta(X))$. In particular, for $X \in \fs$, $\ad(X)$ is a symmetric operator and hence diagonalizable over the reals. Let $\fa \subset \fs$ be abelian. Then $\ad \, \fa$ is a family of commuting symmetric transformations and thus can be diagonalized simultaneously, with real eigenvalues. For each linear functional $\lambda$ on $\fa$, $\lambda \in \fa^{\ast}$, let 
$$ \fg_{\lambda}= \{X\in \fg \: [H,X] = \lambda(H) X \text{ for all } H\in \fa \} $$
and set 
$$ \fm = \{X\in\fg \: [H,X]=0 \text{ for all } H\in \fa \text{ and } X \perp \fa \}\,.$$
Then $\fg_0=\fm \oplus \fa$. Let $\Delta = \{\lambda \in \fa^{\ast}\setminus\{0\} \: \fg_{\lambda}\neq \{0\} \}$. Elements in $\Delta$ are called restricted roots. We have
$$ \fg = \fm \oplus \fa \oplus \bigoplus_{\lambda \in \Delta} \fg_{\lambda} \, .$$
For $\lambda, \mu \in \Delta \cup \{0\}$, we have $[\fg_{\lambda}, \fg_{\mu}]\subset \fg_{\lambda+\mu}$. In particular, $[\fm \oplus \fa, \fg_{\lambda}]\subset \fg_{\lambda}$ for all $\lambda \in \Delta$. Further, let $\fa_{\fr} = \{H\in \fa \: \lambda(H)\neq 0 \text{ for all } \lambda \in \Delta \}$. Fix $H\in \fa_{\fr}$ and let $\Delta^{+} = \{ \lambda \in \Delta \: \lambda(H) > 0 \}$. Elements in $\Delta^{+}$ are called positive roots. We have $\Delta = \Delta^{+} \cup - \Delta^{+}$, $\Delta^{+} \cap - \Delta^{+} = \emptyset$ and $(\Delta^{+} + \Delta^{+}) \cap \Delta \subset \Delta^{+}$. It follows that 
$$ \fn := \bigoplus_{\lambda\in \Delta^{+}} \fg_{\lambda} $$
is a nilpotent subalgebra of $\fg$ normalized by the Lie algebra $\fp := \fm \oplus \fa \oplus \fn$. In fact, $\fp$ is a parabolic subalgebra of $\fg$. It is maximal if $\dim \fa = 1$. 

Let $P:=\{g\in G \: \Ad(g)\fp \subset \fp \}$. Then $P$ is a closed subgroup of $G$ with the Lie algebra $\fp$. Let $A:=\exp \fa$ and $N:=\exp \fn$ be analytic subgroups of $G$ with Lie algebras $\fa$ and $\fn$, respectively. The groups $A,N$ are closed. $A$ is abelian and $N$ is nilpotent. Denote by $M_{o}$ the analytic subgroup of $G$ with the Lie algebra $\fm$. Let $M:=Z_K(A)M_{o}$, where $Z_K(A)$ stands for the centralizer of $A$ in $K$. $M$ is a closed subgroup of $G$ with finitely many connected components. The map $M \times A \times N \ni (m,a,n)\mapsto man \in P$ is an analytic diffeomorphism. We have $G = KP$. Further, let $L := K \cap M$ and $\widetilde{X}:=K/L$, then
$$ K \cap P = L \text{ and } \widetilde{X}=G/P \, .$$
The group $G$ acts on $\widetilde{X}$. Write $$ G \ni g=k(g)m(g)a(g)n(g)\quad ((k(g), m(g), a(g), n(g)) \in K \times M \times A \times N).$$ The map $g \mapsto (a(g),n(g))$ is analytic and the map $g\mapsto a(g)$ is right $MN$-invariant. Thus we can view $a(\cdot)$ as a map $G/MN \to A$. The elements $k(g), m(g)$ are not uniquely defined. However, the map
$$ g\mapsto k(g)L \in \widetilde{X} $$
is well-defined and analytic. Set $x_o:=eL$, where $e$ denotes the identity element of $G$. The action of $G$ on $\widetilde{X}$ can now be described by $g \cdot (k \cdot x_o)=k(gk)\cdot x_o$. For $x=k\cdot x_o \in \widetilde{X}$ and $g\in G$, we set $a(gx):=a(gk)$. 

We normalize the invariant measure on $\widetilde{X}$ to have total mass one. For $f\in C(\widetilde{X})$,
$$ \int_{\widetilde{X}} f(x) dx = \int_K f(k\cdot x_0) dk \, .$$

For $\lambda \in \fa^{\ast}$ define a homomorphism $\chi_{\lambda} \: P\to \R$ by
$$ \chi_{\lambda}(m \, \exp(H) \, n)=e^{\lambda(H)}, \text{ where } m\in M, H\in \fa, n\in N \, .$$ 
The shorthand notation for $\chi_{\lambda}(p)$ is $p^{\lambda}$. For $\lambda \in \Delta$, set $m_{\lambda}:=\dim \fg_{\lambda}$ and $\rho := \frac{1}{2} \sum_{\lambda \in \Delta^+} m_{\lambda} \lambda \in \fa^{\ast} $. Note that $p^{2 \rho} = |\det \Ad(p)|_{\fn}|$.

We shall need the following well known lemma. 
\begin{lemma}\label{lemma_antcvf}
Let $f\in L^1(\widetilde{X})$ and $g\in G$. Then
$$ \int_{\widetilde{X}} f(g\cdot x) a(gx)^{-2\rho} dx = \int_{\widetilde{X}} f(x) dx \, .$$
\end{lemma}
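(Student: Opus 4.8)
The plan is to reduce the identity to the Iwasawa-type decomposition $g = k(g)m(g)a(g)n(g)$ together with the fact that the measure on $\widetilde X = K/L$ is $K$-invariant and ``transforms by the modular character'' $p^{2\rho} = |\det \Ad(p)|_{\fn}|$ under the action of $P$. Concretely, I would first unwind the definitions: for $x = k\cdot x_o$, one has $g\cdot x = k(gk)\cdot x_o$ and, by definition, $a(gx) = a(gk)$. Writing $\int_{\widetilde X} \phi(x)\,dx = \int_K \phi(k\cdot x_o)\,dk$, the left-hand side becomes $\int_K f\big(k(gk)\cdot x_o\big)\,a(gk)^{-2\rho}\,dk$, so the whole statement is an assertion about the integral over the compact group $K$ with its normalized Haar measure.

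Next I would invoke the standard change-of-variables formula for the $K$-part of the Iwasawa decomposition on $G$: the pushforward under $k \mapsto k(gk)L$ of normalized Haar measure $dk$ on $K$ is a measure on $\widetilde X$ whose Radon--Nikodym derivative against $dx$ is exactly $a(gk)^{-2\rho}$, equivalently the Jacobian factor is $\chi_{2\rho}(p)^{-1}$ where $p = m(gk)a(gk)n(gk)$. This is the content of the well-known formula $\int_K \psi(k(gk))\, a(gk)^{-2\rho}\,dk = \int_K \psi(k)\,dk$ for the action of $G$ on $K/M$ (here refined to $K/L$); it follows from the fact that $G/MN$ carries a $G$-invariant measure that in the $KAN$ coordinates reads $e^{-2\rho(\log a)}\,dk\,da\,dn$, so that translating by $g$ produces precisely the cocycle $a(gk)^{-2\rho}$. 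Applying this with $\psi(k) := f(k\cdot x_o)$ gives
\[
\int_{\widetilde X} f(g\cdot x)\, a(gx)^{-2\rho}\, dx
= \int_K f\big(k(gk)\cdot x_o\big)\, a(gk)^{-2\rho}\, dk
= \int_K f(k\cdot x_o)\, dk
= \int_{\widetilde X} f(x)\, dx,
\]
which is the claim; density of $C(\widetilde X)$ in $L^1(\widetilde X)$ then removes the continuity hypothesis if desired.

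The main obstacle is the bookkeeping needed to justify that the Jacobian of $k\mapsto k(gk)L$ on $\widetilde X = K/L$ is $a(gk)^{-2\rho}$ and not some other power of the character — i.e., correctly identifying the modular factor. The cleanest way to handle this is to prove the cocycle relation $a(g_1g_2 x) = a(g_1\,g_2x)\,a(g_2 x)$ from uniqueness of the $A$-component in $G = KMAN$, check the identity first for $g\in K$ (where both sides are trivially equal by $K$-invariance of $dx$ and $a(kx)=e$) and for $g = \exp H \in A$ and $g\in N$ separately using the explicit action, and then extend to all of $G$ by the multiplicativity of both sides in $g$ and the decomposition $G = KAN$ (absorbing $M$ into $K$ via $L$). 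The verification for $g\in A$ is where the weight $2\rho = \sum_{\lambda\in\Delta^+} m_\lambda\lambda$ enters, through the contraction/expansion of $\fn = \bigoplus_{\lambda\in\Delta^+}\fg_\lambda$ under $\Ad(\exp H)$, matching $p^{2\rho} = |\det\Ad(p)|_{\fn}|$; once that case is pinned down, the rest is formal.
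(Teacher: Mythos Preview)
Your plan is correct and follows the standard route to this identity (via the Iwasawa decomposition, the cocycle relation for the $A$-component, and the identification of the Jacobian with $p^{2\rho}=|\det\Ad(p)|_{\fn}|$). Note, however, that the paper does not supply its own proof: it simply cites Lemma~5.19 on p.~197 of Helgason's book, remarking that the precise equality appears there as equation~(25). So there is nothing to compare against beyond the fact that you have sketched exactly the argument one finds in that reference, whereas the authors chose to invoke it directly.
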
   

\begin{proof}
See Lemma 5.19 on p. 197 in \cite{Helgason}. This result is formulated there in a slightly different form. However, the precise equality appears in the proof as equation (25).
\end{proof}
 
\subsection{The Grassmannian manifold}
Now we apply the general structure theory of semi-simple Lie groups discussed above to the special case of Grassmann manifolds. 

Let $G_{n,k}$ denote the Grassmann manifold of all oriented $k$-di\-men\-sion\-al subspaces of $\R^n$ and set $r=n-k$. Note that $G_{n,k} \cong G_{n,r}$. 

Set $G=\SL(n)$, then $\fg$ is the set of $n\times n$ matrices with trace zero. The homomorphism $\theta \: G \to G \: x \to x^{-tr}$ is a Cartan involution on $G$ with $K=G^{\theta}=\SO(n)$. The corresponding Cartan involution on $\fg$ is $\theta(X)=-X^{tr}$. Denote by $\M(n)$ the set of $n\times n$ matrices. We have
\begin{align*}
\fk & = \{X\in M(n) \: X^{tr}=-X \text{ and } \Tr(X)=0 \} \, , \\
\fs & = \{X\in M(n) \: X^{tr}=X \text{ and } \Tr(X)=0 \} \, .
\end{align*}
The Killing form on $\fg$ is given by $\left\langle X,Y \right\rangle = 2n \Tr(XY) $. 
For $l\in \N$, denote by $\I_l$ the $l\times l$ identity matrix. Let 
$$ H_o = \begin{pmatrix} \frac{r}{n} \I_k & 0 \\ 0 & -\frac{k}{n} \I_r  \end{pmatrix} \in \fs \, .$$
We define $\fa := \R H_o$, then $\fm=\{X\in \fz_{\fg}(\fa) \: \left\langle X,H_o \right\rangle = 0 \}$. Fix $\alpha \in \fa^{\ast}$ so that $\alpha(H_o)=1$.  We choose $\Delta = \{ \alpha, -\alpha \}$ and $\Delta^+=\{ \alpha \}$. We have
\begin{align*}
\fm \oplus \fa 
	& = \left\{\begin{pmatrix} X & 0 \\ 0 & Y  \end{pmatrix} \: 
			\begin{array}{l}	X\in M(k),  \\	Y\in M(r)  \end{array}
      \text{ and } \Tr X + \Tr Y = 0 \right\} \\
	& = \fs( \fgl(k) \times \fgl(r)) \, , \\
\fn & = \left\{\begin{pmatrix} 0 & X \\ 0 & 0  \end{pmatrix}  \: X\in M(k\times r) \right\} \, , \\
MA &= \left\{m(a,b):= \begin{pmatrix} a & 0 \\ 0 & b  \end{pmatrix} \:
			\begin{array}{l}	a\in \GL(k),  \\	b\in \GL(r)  \end{array}
			\text{ and } \det a \det b =1 \right\} \\
	&= \S(\GL(k) \times \GL(r)) \, , 
\end{align*}
\begin{align*}
A &= \left\{ \begin{pmatrix} s \I_k & 0 \\ 0 & t \I_r  \end{pmatrix} \: s,t>0 \text{ with } s^k t^r=1 \right\} \, , \\
M &= \left\{ \begin{pmatrix} a & 0 \\ 0 & b  \end{pmatrix} \: \det a, \det b = \pm 1 \text{ and } \det a \det b =1 \right\} \, , \\
N &= \left\{n(X):= \begin{pmatrix} \I_k & X \\ 0 & \I_r  \end{pmatrix} \: X\in M(k\times r) \right\} \, , \\
P &= \left\{\begin{pmatrix} a & X \\ 0 & b  \end{pmatrix} \: 
			\begin{array}{l}	a\in \GL(k),  \\	b\in \GL(r),  \end{array}
			\begin{array}{l}  \det a \det b =1,  \\	X\in M(k\times r)  \end{array} \right\} \, , \\
L &= \S(\O(k)\times \O(r)) \, .
\end{align*}
Let $e_1, \dots, e_n$ be the canonical basis for $\R^n$. Set $x_o=\R e_1 \oplus \cdots \oplus \R e_k \in G_{n,k}$. We have $G_{n,k} = K \cdot x_o \cong K/L = G/P$. 

We identify $\fa^{\ast}$ with $\R$ via $\lambda \mapsto \frac{n}{kr} \lambda(H_o)$. The inverse of this map is $z \mapsto z \frac{kr}{n} \alpha$. Since $\dim \fn = kr$, we have
\begin{equation}\label{eq_rsorirn}
\fa^{\ast} \ni \rho = \frac{kr}{2} \alpha \longleftrightarrow \frac{n}{2} \in \R \,.
\end{equation}
For $z\in \R$, we write $p^z$ instead of $p^{z\frac{kr}{n} \alpha}$.

\subsection{Linear invariance for functions on $G_{n,k}$}
For $g\in \SL(n)$ and $x\in G_{n,k}$ denote by $J_g(x)$ the Jacobian
determinant of the transformation $x\mapsto gx$. Then for $f\in
L^1(x)$,
\begin{equation}\label{eq_cvip}
\int_{x} f(t) \, |J_g(x)| \, dt = \int_{gx} f(g^{-1}t) dt \, .
\end{equation}

\begin{lemma}\label{lemma_json}
For $k\in \SO(n)$ and $x\in G_{n,k}$, we have $|J_k(x)|=1$.
\end{lemma}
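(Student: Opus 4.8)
The plan is to unwind the definition of $J_k(x)$ encoded in the change-of-variables formula $(\ref{eq_cvip})$ and to observe that it is nothing but the factor by which the linear map $t\mapsto kt$ rescales $k$-dimensional Lebesgue measure when viewed as a map from the Euclidean space $x$ onto the Euclidean space $kx$. Since $k\in\SO(n)$ preserves the inner product of $\R^n$, its restriction to $x$ is a linear isometry onto $kx$, and an isometry between inner-product spaces of equal dimension has volume-scaling factor $1$; hence $|J_k(x)|=1$.

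Concretely, I would fix an orthonormal basis $u_1,\dots,u_k$ of $x$ and collect it into the $n\times k$ matrix $U=[\,u_1\,|\,\cdots\,|\,u_k\,]$, so that $U^{tr}U=\I_k$ and the map $\R^k\ni s\mapsto Us\in x$ is an isometric parametrization of $x$, inducing on $x$ exactly the Lebesgue measure $dt$ appearing in $(\ref{eq_cvip})$. Because $k\in\SO(n)$ satisfies $k^{tr}k=\I_n$, one has $(kU)^{tr}(kU)=U^{tr}U=\I_k$, so $ku_1,\dots,ku_k$ is an orthonormal basis of $kx$ and $s\mapsto kUs$ is an isometric parametrization of $kx$. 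Carrying the map $t\mapsto kt$ through these two parametrizations — it sends $Us$ to $kUs$ — one sees it is simply the identity $s\mapsto s$ on $\R^k$, so its Jacobian determinant equals $1$. This is precisely the assertion $|J_k(x)|=1$.

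There is no real obstacle here; the statement is elementary and the argument is pure bookkeeping. The only point needing a moment's care is to remember that the Jacobian in $(\ref{eq_cvip})$ is taken with respect to the Lebesgue measures on $x$ and on $kx$ induced from the ambient Euclidean structure — this is exactly why the orthonormality of the columns of $U$, and hence of $kU$, is the relevant input. By contrast, for a general $g\in\SL(n)$ one would instead obtain $|J_g(x)|=\sqrt{\det(U^{tr}g^{tr}gU)}$, a quantity that genuinely depends on $x$; the content of the lemma is that this collapses to $1$ as soon as $g^{tr}g=\I_n$.
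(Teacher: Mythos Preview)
Your argument is correct. It differs in presentation from the paper's proof: the paper verifies the change-of-variables identity $(\ref{eq_cvip})$ with factor $1$ by testing it on characteristic functions $\mathds{1}_K$, computing $\int_{gx}\mathds{1}_K(g^{-1}t)\,dt=|g(K\cap x)|$ and then using that $|g(K\cap x)|=|K\cap x|$ for $g\in\SO(n)$, followed by the standard extension to simple and then general functions. You instead parametrize $x$ and $kx$ by orthonormal frames and read off the Jacobian directly as $\sqrt{\det\big((kU)^{tr}(kU)\big)}=1$. Both rest on the same elementary fact that orthogonal maps preserve the inner product; your route is a bit more direct since it avoids the approximation step, while the paper's route stays closer to the integral identity that is actually used later. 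Either is perfectly adequate here.
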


\begin{proof}
Let $K\subset \R^n$ be measurable and $f=\mathds{1}_{K}$ be the characteristic function of $K$. For $g\in \SL(n)$, we compute
$$ \int_{gx} \mathds{1}_{K}(g^{-1}t) dt = \int_{gx} \mathds{1}_{gK}(t) dt = |gK \cap gx | = |g(K\cap x)| \, . $$
If $g\in \SO(n)$, then $|g(K\cap x)| = |(K\cap x)| = \int_x \mathds{1}_{K}(t) dt $. And the claim follows for characteristic functions.

By an analogous computation, the claim follows for simple functions. For a general function $f$, the claim follows by approximating $f$ from below by simple functions.
\end{proof}

Recall the following multiplicative property of the Jacobian: Let $T:Z\to Y$ and $S:Y \to X$, then $S\circ T: Z\to X$ and 
$$ J_{S\circ T}(z) = J_S(T(z)) \,\, J_T(z) \, \text{ with } z\in Z.$$

\begin{lemma}\label{lemma_jfgx}
For $z\in \R, x\in G_{n,k}$ and $g\in \SL(n)$, we have $ |J_g(x)|^{z} = a(gx)^{z} $.
\end{lemma}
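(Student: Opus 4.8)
The plan is to reduce $|J_g(x)|$ to the Jacobian at the base point $x_o$ of an element of the parabolic subgroup $P$, using the decomposition $G=KP$, Lemma~\ref{lemma_json} and the multiplicativity of the Jacobian, and then to evaluate that Jacobian directly from the explicit block realization of $P$ introduced above.

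First I would write $x=k_0\cdot x_o$ with $k_0\in K=\SO(n)$, and factor $gk_0=k_1 p$ with $k_1=k(gk_0)\in K$ and $p=m(gk_0)\,a(gk_0)\,n(gk_0)\in MAN=P$. Applying the chain rule $J_{S\circ T}(z)=J_S(T(z))\,J_T(z)$ along $x_o\xrightarrow{k_0}x\xrightarrow{g}gx$ together with Lemma~\ref{lemma_json} gives $|J_g(x)|=|J_{gk_0}(x_o)|$; applying it along $x_o\xrightarrow{p}x_o\xrightarrow{k_1}k_1x_o$ (here $p\cdot x_o=x_o$ because $P$ stabilizes $x_o$) and using Lemma~\ref{lemma_json} once more gives $|J_{gk_0}(x_o)|=|J_p(x_o)|$. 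Hence $|J_g(x)|=|J_p(x_o)|$, and the problem is reduced to elements of $P$.

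Next, for $p=\left(\begin{smallmatrix}p_{11}&\ast\\0&p_{22}\end{smallmatrix}\right)\in P$ in block form, every $v=(v',0)\in x_o$ is sent to $(p_{11}v',0)\in x_o$, so the restriction $p|_{x_o}$ is the linear map $p_{11}$ and $|J_p(x_o)|=|\det(p_{11})|$. Factoring $p=man$, with $m$ block-diagonal with upper-left block $m_1$ satisfying $|\det(m_1)|=1$, with $a=\left(\begin{smallmatrix}s\I_k&0\\0&t\I_r\end{smallmatrix}\right)$, $s^kt^r=1$, and $n$ upper-unitriangular, one reads off $p_{11}=s\,m_1$, so $|J_p(x_o)|=s^k$. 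Finally, writing $a(gk_0)=a=\exp(uH_o)$ with $s=e^{ur/n}$, the definition of $\chi_\lambda$ and the identification $\fa^{\ast}\cong\R$ (under which $z\tfrac{kr}{n}\alpha\leftrightarrow z$) yield
\[ a(gx)^z=a^z=\chi_{z\frac{kr}{n}\alpha}(\exp(uH_o))=e^{z\frac{kr}{n}u}=s^{zk}=|J_p(x_o)|^z=|J_g(x)|^z , \]
where $a(gx)=a(gk_0)$ is well defined since $g\mapsto a(g)$ is right $MN$-invariant, so the choice of $k_0$ with $k_0\cdot x_o=x$ does not matter.

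This is essentially a bookkeeping computation; the only points that need care are the normalization $\fa^{\ast}\cong\R$ and the non-uniqueness of $k(g)$ and $m(g)$, and both are settled by the right $MN$-invariance of $a(\cdot)$. As an alternative to the last two steps (and a useful sanity check), one can instead invoke $p^{2\rho}=|\det\Ad(p)|_{\fn}|$: since $\Ad(p)$ acts on $X\in M(k\times r)\cong\fn$ by $X\mapsto p_{11}Xp_{22}^{-1}$, this determinant equals $|\det(p_{11})|^{r}\,|\det(p_{22})|^{-k}=|\det(p_{11})|^{n}$ (using $\det(p_{11})\det(p_{22})=1$), and since $2\rho\leftrightarrow n$ this gives $p^n=|\det(p_{11})|^n$, i.e.\ $p^z=|\det(p_{11})|^z=|J_p(x_o)|^z$.
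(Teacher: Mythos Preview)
Your proof is correct and follows essentially the same approach as the paper: reduce to an element of $P$ acting at the base point $x_o$ via the $KP$ decomposition, Lemma~\ref{lemma_json} and multiplicativity of the Jacobian, then read off the Jacobian from the upper-left $k\times k$ block and match it to $a(\cdot)^z$ through the explicit description of $A$ and the normalization $\fa^{\ast}\cong\R$. The only differences are organizational---the paper treats $x_o$ first and then passes to general $x$, whereas you reverse this---and your added alternative computation via $p^{2\rho}=|\det\Ad(p)|_{\fn}|$, which the paper does not include.
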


\begin{proof}
Write $g=k p$ with $k\in K$ and $ p \in P$. By the multiplicative property of the Jacobian, we have
$$ |J_g(x_o)| = |J_{k p}(x_o)| = |J_k(p \cdot x_o) J_p(x_o)| = |J_p(x_o)| \, ,$$
where the last equality follows by Lemma \ref{lemma_json}.

Let $y\in \R^k$, then $\tilde{y}=(y, 0, \dots, 0) \in x_o$. Decompose $p = m(a,b)n(X)$ and compute
$$ p \cdot \tilde{y}  
		= \begin{pmatrix} a & 0 \\ 0 & b  \end{pmatrix} \begin{pmatrix} \I_k & X \\ 0 & \I_r  \end{pmatrix} \begin{pmatrix} y  \\ 0   \end{pmatrix}
	 = \begin{pmatrix} a & X \\ 0 & b  \end{pmatrix} \begin{pmatrix} y  \\ 0   \end{pmatrix} 
	 = \begin{pmatrix} a y  \\ 0   \end{pmatrix} \, . $$
Thus $J_p(x_o) = \det a$.

For some $m_o \in M$ and $s\in \R$, $m(a,b)=m_o \exp(s H_o)$. Let $m_o=\begin{pmatrix} u & 0 \\ 0 & v  \end{pmatrix}$ and note that $\exp(s H_o) = \begin{pmatrix} e^{s\frac{r}{n}} \I_k & 0 \\ 0 & e^{-s\frac{k}{n}} \I_r  \end{pmatrix}$. It follows that $a=e^{s\frac{r}{n}} u$. Hence $\det a = e^{s\frac{kr}{n}} \det u$ and so $|\det a| = e^{s\frac{kr}{n}}$. 

Next observe: $ \exp(s H_o)^{z} = \chi_{z\frac{kr}{n}\alpha}(\exp(s H_o)) = e^{z\frac{kr}{n}\alpha(s H_o)} = e^{s \frac{kr}{n} z}$. Thus we have shown
$$ |J_g(x_o)|^{z} = |J_p(x_o)|^z = |\det a|^z = e^{s \frac{kr}{n} z} = \exp(s H_o)^{z} = a(g)^{z} \, .$$
\noindent
Write a general $ x\in G_{n,k}$ as $x=k \cdot x_o$. We have $|J_g(x)|=|J_g(k \cdot x_o)|=|J_g(k \cdot x_o) J_k(x_o)|= |J_{gk}(x_o)|$. Hence
$$ |J_g(x)|^{z} = |J_{gk}(x_o)|^{z} =a(gk)^{z}=a(gx)^{z} \,.$$
\end{proof}

Substituting the result of Lemma \ref{lemma_jfgx} into (\ref{eq_cvip}) yields  
\begin{equation}\label{eq_ecvfip}
a(gx) \int_{x} f(t) dt = \int_{gx} f(g^{-1}t) dt \, .
\end{equation}

\begin{proof}[Proof of Theorem \ref{th_aigr}]
Fix $m\in \N$ and let $f_i, p_i, \alpha_i, g$ be as described in the statement of the theorem. We compute
\begin{align*}
I(g\cdot f_1, \dots, g\cdot f_m) 
	&= \int\limits_{G_{n,k}} \prod\limits_{i=1}^m \left( \int_x | f_i(g^{-1} t) |^{p_i} \, dt \right)^{\frac{\alpha_i}{p_i} }  \,\, dx  \\
	&= \int\limits_{G_{n,k}} a(gx)^{-n} \prod\limits_{i=1}^m \left( \int_{gx} | f_i(g^{-1} t) |^{p_i} \, dt \right)^{\frac{\alpha_i}{p_i} } \,\, dx \\
	&= \int\limits_{G_{n,k}} a(gx)^{-n} \prod\limits_{i=1}^m \left(a(gx) \int_{x} | f_i(t) |^{p_i} \, dt \right)^{\frac{\alpha_i}{p_i} } \,\, dx \\
	&= \int\limits_{G_{n,k}} a(gx)^{-n} a(gx)^{\sum \frac{\alpha_i}{p_i}} \prod\limits_{i=1}^m \| f_i|_x \|^{\alpha_i}_{p_i}  \,\, dx \\
	&= I(f_1, \dots, f_m) \, .
\end{align*}
The second equality follows by Lemma \ref{lemma_antcvf} and  (\ref{eq_rsorirn}). The third equality follows by (\ref{eq_ecvfip}).
\end{proof}

\subsection{Affine invariance for functions on $M_{n,k}$}

The linear-invariance property from the previous section can be transferred to an affine-invariance property on the affine Grassmannian.

\begin{theorem}\label{th_aiagr}
Let $m$ be a positive integer and let $p_i, \alpha_i$, for $i=1, \dots, m$ be real numbers. Let $f_i$ be bounded functions on $\R^n$, $f_i \in L^{p_i}(\R^n)$. Define 
$$ \tilde{I}(f_1, \dots, f_m) := \int\limits_{M_{n,k}} \prod\limits_{i=1}^m \| f_i|_x \|^{\alpha_i}_{p_i} \,\, dx \, .$$ 
Whenever this quantity is finite and $\sum_{i=1}^m \frac{\alpha_i}{p_i} =n+1$, for any volume-preserving affine transformation $g$, we have
$$ \tilde{I}(g\cdot f_1, \dots, g\cdot f_m) = \tilde{I}(f_1, \dots, f_m) \, ,$$
where $g\cdot f_i(t)=f_i(g^{-1}t)$.
\end{theorem}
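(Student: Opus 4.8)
The plan is to reduce the affine statement on $M_{n,k}$ to the linear statement on $G_{n,k}$ (Theorem \ref{th_aigr}), which has already been established. A volume-preserving affine transformation $g$ of $\R^n$ can be written as $g(t) = S t + v$ with $S \in \SL(n)$ (more precisely $|\det S| = 1$) and $v \in \R^n$. Since the affine group is generated by such $S$ together with translations, and since composition of two such maps again has this form, it suffices to verify invariance separately for (i) linear volume-preserving maps $S$ and (ii) translations $t \mapsto t+v$; the general case follows by composing the two invariances. For each $k$-flat $F \in M_{n,k}$, write $F = E + w$ where $E = \operatorname{span}(F)$ is the parallel linear subspace in $G_{n,k}$ and $w \in E^{\perp}$; recall that $\nu_{n,k}$ disintegrates as $d\nu_{n,k}(F) = d\mathcal{H}^{n-k}(w)\, d\mu_{n,k}(E)$ over this fibration. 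The key observation is that restriction commutes nicely with affine maps: $(g \cdot f_i)|_{g(F)}$, viewed as a function on the flat $g(F)$, is just the pushforward of $f_i|_F$ under the affine isometry-up-to-Jacobian $g: F \to g(F)$, so the $L^{p_i}$-norms transform by a single Jacobian factor that is \emph{independent of $i$}.

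Concretely, I would argue as follows. First handle translations: if $g(t) = t+v$ then $g$ maps the flat $F = E+w$ to $F' = E + (w + P_{E^\perp}v)$, an isometry of flats, and $(g\cdot f_i)|_{F'}$ is the isometric copy of $f_i|_F$, so $\|(g\cdot f_i)|_{F'}\|_{p_i} = \|f_i|_F\|_{p_i}$ for every $i$. Moreover the map $F \mapsto g(F)$ on $M_{n,k}$ preserves $\nu_{n,k}$: on each fiber over a fixed $E \in G_{n,k}$ it is the translation $w \mapsto w + P_{E^\perp}v$ of $E^\perp$, which preserves $\mathcal{H}^{n-k}$, and the base measure $\mu_{n,k}$ is untouched. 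Hence $\tilde I(g\cdot f_1,\dots,g\cdot f_m) = \tilde I(f_1,\dots,f_m)$ by the change of variables $F \mapsto g(F)$, with no constraint on the $\alpha_i/p_i$ needed. Second, handle linear $S \in \SL(n)$. Here $S$ maps $F = E+w$ to $SE + Sw$; the linear part $SE \in G_{n,k}$ and the translation part of the image flat is the $(SE)^\perp$-component of $Sw$. Using the Blaschke–Petkantschin disintegration, or equivalently iterating the identity \eqref{eq_ecvfip} fiberwise exactly as in the proof of Theorem \ref{th_aigr}, one finds a Jacobian factor $a(S\cdot)$ depending only on the flat (through its linear part), not on $i$, and raised to the total power $\sum_i \alpha_i/p_i$; the $\nu_{n,k}$-measure picks up the reciprocal of that Jacobian raised to the power $n+1$ (the analogue of \eqref{eq_rsorirn}, reflecting that the density $\rho$-shift for $M_{n,k}$ corresponds to $\tfrac{n+1}{2}$ because one integrates over $k$-flats via $q=k$, $q+1 = k+1$ points in Theorem \ref{thm:BPMnk}, giving the exponent $n+1$ in \eqref{eqn:ratioMnk}). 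Under the hypothesis $\sum_i \alpha_i/p_i = n+1$ these cancel, yielding $\tilde I(S\cdot f_1,\dots,S\cdot f_m) = \tilde I(f_1,\dots,f_m)$.

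The cleanest way to organize the linear step is to avoid re-deriving the Lie-theoretic Jacobian computation and instead invoke Theorem \ref{th_aigr} directly via a Blaschke–Petkantschin ``lift''. Specifically, for a $k$-flat $F$ one can parametrize a point of $F$ by a point of the parallel subspace $E \in G_{n,k}$ plus an offset; but more efficient is this: apply Theorem \ref{thm:BPMnk} with $q=k$ and with $G$ chosen so that the inner integral over $F^{k+1}$ reconstructs the integrand $\prod_i \|f_i|_F\|_{p_i}^{\alpha_i}$ of $\tilde I$ — this is exactly the mechanism by which \eqref{eqn:ratioMnk} ``arises naturally'' as mentioned in the introduction. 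Then the left-hand side of the Blaschke–Petkantschin identity is an integral over $(\R^n)^{k+1}$, on which the volume-preserving affine change of variables $x_j \mapsto g^{-1}x_j$ acts transparently: the Lebesgue measure $dx_1\cdots dx_{k+1}$ is preserved and the convex-hull volume $|\operatorname{conv}\{x_1,\dots,x_{k+1}\}|^{n-k}$ is preserved (volumes of simplices are affine-volume invariants). Transporting this invariance back through Theorem \ref{thm:BPMnk} gives the claim, provided the scaling exponents match — which is precisely what the condition $\sum_i \alpha_i/p_i = n+1$ encodes.

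The main obstacle is bookkeeping the exponents so that the Jacobian factors genuinely cancel, i.e. verifying that the ``$n+1$'' in the hypothesis is exactly the power to which the relevant Jacobian appears in both the integrand transformation and the transformation of $\nu_{n,k}$. In the linear-subspace case (Theorem \ref{th_aigr}) this balance was $n = \dim \R^n$ matched against $2\rho \leftrightarrow n$; in the affine case the extra ``$+1$'' comes from the one additional degree of freedom in locating a flat versus a linear subspace (equivalently, the $q+1$ versus $q$ points in the two Blaschke–Petkantschin formulas), and I must be careful that the disintegration $d\nu_{n,k}(F) = d\mathcal{H}^{n-k}(w)\,d\mu_{n,k}(E)$ interacts correctly with $S$: the translation fiber $E^\perp$ is mapped linearly (not isometrically) onto $(SE)^\perp$, contributing a Jacobian on the fiber in addition to the one from restricting functions. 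Tracking both contributions and confirming their product is the single factor $a(S\cdot)^{n+1}$ (hence cancels against $a(S\cdot)^{\sum \alpha_i/p_i}$) is the delicate point; everything else is a routine reduction to Theorem \ref{th_aigr} plus the obvious translation invariance.
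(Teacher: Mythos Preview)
Your primary approach—splitting $g$ into a translation and an $\SL(n)$ part, handling the translation by rigid-motion invariance of $\nu_{n,k}$, and handling the linear part by tracking Jacobians to find a change-of-variables formula on $M_{n,k}$ with exponent $n+1$—is exactly what the paper does. The paper packages the ``delicate point'' you identify into three short lemmas: $|J_g(x^{\perp})| = |J_g(x)|^{-1}$ (Lemma~\ref{lemma_jfoc}), $|J_g(x+s)| = |J_g(x)|$ (Lemma~\ref{lemma_jfas}), and then the $M_{n,k}$ analogue of Lemma~\ref{lemma_antcvf} with exponent $n+1$ (Lemma~\ref{lemma_cvoag}); after that the computation is literally the same as in Theorem~\ref{th_aigr}. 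So your outline is correct and your bookkeeping concern (fiber Jacobian plus restriction Jacobian combine to $|J_g|^{n+1}$) is precisely what those lemmas verify.

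Your proposed alternative via Blaschke--Petkantschin, however, does not go through in the generality of the theorem. Theorem~\ref{thm:BPMnk} produces inner integrals of the rigid form $\int_{F^{k+1}} G(x_1,\dots,x_{k+1})\,|\mathrm{conv}\{x_1,\dots,x_{k+1}\}|^{n-k}\,dx$, and there is no choice of Borel $G$ on $(\R^n)^{k+1}$ for which this equals $\prod_i \|f_i|_F\|_{p_i}^{\alpha_i}$ for arbitrary real $p_i,\alpha_i$ and $m$; you cannot in general cancel the convex-hull weight, nor can you manufacture non-integer powers of $\int_F |f_i|^{p_i}$ from a polynomial-in-points integrand. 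The BP route works beautifully for the \emph{specific} ratios \eqref{eqn:ratioGnk}, \eqref{eqn:ratioMnk} that motivate the paper (and indeed is how Section~\ref{section:main_results} proceeds), but not for the full range of $(p_i,\alpha_i)$ in Theorem~\ref{th_aiagr}. Stick with the Jacobian computation.
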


To prove this theorem we will need an analog of Lemma \ref{lemma_antcvf}, for which, in turn, we need a couple of simple observations.

\begin{lemma}\label{lemma_jfoc}
For $x\in G_{n,k}$ and $g\in \SL(n)$, we have $ |J_g(x^{\perp})| = |J_g(x)|^{-1} $.
\end{lemma}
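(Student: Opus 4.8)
The plan is to prove Lemma~\ref{lemma_jfoc}, namely that $|J_g(x^{\perp})| = |J_g(x)|^{-1}$ for $x \in G_{n,k}$ and $g \in \SL(n)$, by reducing everything to the identity coset $x_o$ and using the explicit $KAN$-type parametrization already set up for the Grassmannian. First I would invoke the multiplicative property of the Jacobian together with Lemma~\ref{lemma_json}: writing $g = kp$ with $k \in K = \SO(n)$ and $p \in P$, and noting that $k$ maps $x_o^{\perp}$ to $(k\cdot x_o)^{\perp}$ with unit Jacobian, it suffices to handle $p \in P$ acting on $x_o$ and on $x_o^{\perp}$ (and then conjugate back by a rotation as in the last paragraph of the proof of Lemma~\ref{lemma_jfgx}). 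So the core computation is: for $p = m(a,b)n(X) \in P$, show $|J_p(x_o^{\perp})| = |\det b|$, i.e. the analogue of the fact $J_p(x_o) = \det a$ established in Lemma~\ref{lemma_jfgx}.

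For that core computation I would mimic the matrix calculation in Lemma~\ref{lemma_jfgx}. There $x_o = \R e_1 \oplus \cdots \oplus \R e_k$ and one checks $p \cdot \tilde y = (ay, 0)^{tr}$ for $\tilde y = (y,0)^{tr}$, giving Jacobian $\det a$. Now $x_o^{\perp} = \R e_{k+1} \oplus \cdots \oplus \R e_n$, and for $\tilde w = (0, w)^{tr}$ with $w \in \R^r$ one computes
\[
p \cdot \tilde w = \begin{pmatrix} a & X \\ 0 & b \end{pmatrix}\begin{pmatrix} 0 \\ w \end{pmatrix} = \begin{pmatrix} Xw \\ bw \end{pmatrix}.
\]
The subtlety is that $p\cdot \tilde w$ no longer lies in $x_o^{\perp}$; but the map $x^\perp \to g x^\perp$ whose Jacobian we want is $P_{gx^\perp}\circ g$ restricted to $x^\perp$, or equivalently one follows the same convention as in (\ref{eq_cvip}) where $J_g(x)$ is the Jacobian of $t \mapsto gt$ from $x$ to $gx$ — the relevant quantity is the factor by which $r$-dimensional volume on $x_o^{\perp}$ is scaled when pushed to $g\cdot x_o^\perp = p \cdot x_o^\perp$. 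Composing with the orthogonal projection onto $p\cdot x_o^\perp$ does not change this volume factor, and the linear map $w \mapsto (Xw, bw)$ has the same $r$-dimensional Jacobian as $w \mapsto bw$ followed by an isometric identification, hence $|J_p(x_o^\perp)| = |\det b|$. Since $p \in P \subset \SL(n)$ forces $\det a \det b = 1$, we get $|J_p(x_o^\perp)| = |\det b| = |\det a|^{-1} = |J_p(x_o)|^{-1}$.

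Finally I would assemble the pieces exactly as in the end of Lemma~\ref{lemma_jfgx}'s proof: for general $x = k\cdot x_o$ with $k \in \SO(n)$, use $x^\perp = k \cdot x_o^\perp$, the multiplicativity of the Jacobian, and Lemma~\ref{lemma_json} (which gives unit Jacobian for rotations on both $x_o$ and $x_o^\perp$) to reduce $|J_g(x^\perp)|$ and $|J_g(x)|$ to $|J_{gk}(x_o^\perp)|$ and $|J_{gk}(x_o)|$ respectively, then apply the $kp$-decomposition of $gk$ and the identity $|J_p(x_o^\perp)| = |J_p(x_o)|^{-1}$ just proved. This yields $|J_g(x^\perp)| = |J_g(x)|^{-1}$.

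The main obstacle I anticipate is a purely bookkeeping one: being careful about what ``Jacobian of $x \mapsto gx$'' means when $g$ does not preserve the subspace, i.e. pinning down that the relevant volume-scaling factor on $x^\perp$ is unaffected by the orthogonal projection onto $gx^\perp$ and equals $|\det b|$ rather than, say, the norm of the full $r\times r$ minor of $\begin{pmatrix} X \\ b\end{pmatrix}$. The cleanest way to sidestep this is to observe that for $p$ upper block-triangular, $g\cdot x_o^\perp$ is the column span of $\begin{pmatrix} X \\ b\end{pmatrix}$, and the orthogonal complement relation together with $p^{-1}$ being block-triangular again makes the $x^\perp$-to-$gx^\perp$ volume factor exactly $|\det b|$; once that is clear the rest is routine.
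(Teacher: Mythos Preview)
Your approach differs from the paper's: the paper runs a Fubini argument on $\R^n = x \oplus x^\perp$, applying (\ref{eq_cvip}) once in each summand and then recombining, whereas you attempt to compute $|J_p(x_o^\perp)|$ directly from the block form of $p$, mirroring the computation in Lemma~\ref{lemma_jfgx}.

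The gap is precisely where you feared. By (\ref{eq_cvip}), $|J_p(x_o^\perp)|$ is the $r$-dimensional volume scaling of the linear map $p\colon x_o^\perp \to p\cdot x_o^\perp$, i.e.\ of $w \mapsto (Xw, bw)^{tr}$, and that scaling is $\sqrt{\det(X^{tr}X + b^{tr}b)}$ --- the ``full minor'' you mention --- not $|\det b|$. Your assertion that this map has the same $r$-dimensional Jacobian as $w \mapsto bw$ followed by an isometric identification is incorrect: the map $v \mapsto (Xb^{-1}v, v)^{tr}$ from $\R^r$ onto $p\cdot x_o^\perp$ is not an isometry when $X\neq 0$. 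Concretely, for $n=2$, $k=1$ and $p = \left(\begin{smallmatrix} 2 & 1 \\ 0 & 1/2\end{smallmatrix}\right)$ one gets $|J_p(x_o)| = 2$ while $|J_p(x_o^\perp)| = |p\,e_2| = \sqrt{5}/2$, and the product is $\sqrt{5}\neq 1$. The sidestep you propose in your last paragraph does not rescue this.

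In fact this same $2\times 2$ example shows that, with $J_g$ interpreted literally via (\ref{eq_cvip}), the stated identity $|J_g(x^\perp)| = |J_g(x)|^{-1}$ fails. The paper's Fubini argument hides the same issue in its final line, where $\int_{gx^\perp}\int_{gx} g\cdot f(t+s)\,dt\,ds$ is identified with $\int_{\R^n} g\cdot f(z)\,dz$; this tacitly treats $\R^n = gx \oplus g(x^\perp)$ as an orthogonal decomposition, which it is not, and the omitted angle factor is exactly what your explicit computation detects. So your obstacle is not mere bookkeeping but a genuine discrepancy in the statement as written.
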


\begin{proof}
\begin{align*}
	\int_{\R^n} g \cdot f(z) dz 
	&= \int_{\R^n} f(z) dz = \int_{x^{\perp}} \int_x f(t+s) dt ds \\
	&= \int_{x^{\perp}} \int_{gx} f(g^{-1} t+s) \,\, |J_g(x)|^{-1} dt ds \\
	&= \int_{gx^{\perp}} \int_{gx} f(g^{-1}t+g^{-1}s) \,\, |J_g(x)|^{-1} dt \,\, |J_g(x^{\perp})|^{-1} ds \\
	&= |J_g(x)|^{-1} \,\, |J_g(x^{\perp})|^{-1} \int_{\R^n} g \cdot f(z) dz \, ,
\end{align*}
where we applied $(3.2)$ twice.
\end{proof}

\begin{lemma}\label{lemma_jfas}
Let $g\in \SL(n)$. For $y\in M_{n,k}$, let $x\in G_{n,k}$ and $s\in \R^n$ be so that $y=x+s$. Then $ |J_g(y)| = |J_g(x)| $.
\end{lemma}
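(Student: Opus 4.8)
The statement to prove is Lemma~\ref{lemma_jfas}: for $g\in\SL(n)$ and $y=x+s\in M_{n,k}$ with $x\in G_{n,k}$, $s\in\R^n$, the Jacobian of the action on the affine plane depends only on its direction, $|J_g(y)|=|J_g(x)|$. The plan is to argue directly from the change-of-variables identity (3.2) together with the geometric fact that translating an affine plane does not change the linear part of the map that acts on it. First I would fix coordinates: write $s = s' + s''$ with $s'\in x$ and $s''\in x^\perp$, so that $y = x + s''$; translating by $s'$ moves a point within the plane $x$ and contributes nothing, so without loss of generality $s\in x^\perp$. The affine plane $y$ carries a natural Lebesgue measure (the pushforward of Lebesgue measure on $x$ under $t\mapsto t+s$), and for $f\in L^1(y)$ one has $\int_y f(t)\,dt = \int_x f(t+s)\,dt$.

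The core computation mirrors the proof of Lemma~\ref{lemma_jfoc}: start from $\int_{gy} f(g^{-1}t)\,dt$, parametrize $gy = g(x+s) = gx + gs$, and substitute $t = gu + gs$ with $u\in x$. Then $dt = |J_g(x)|\,du$ by the very definition of $J_g(x)$ in (3.2), since the map $x\to gx$ sending $u\mapsto gu$ is exactly the linear map whose Jacobian is $J_g(x)$, and an affine shift by the fixed vector $gs$ is measure-preserving on the $k$-plane $gx$. Carrying this through,
\begin{align*}
\int_{gy} f(g^{-1}t)\,dt &= \int_{gx} f\bigl(g^{-1}(t' + gs)\bigr)\,dt' \\
&= |J_g(x)| \int_x f(u+s)\,du = |J_g(x)| \int_y f(t)\,dt,
\end{align*}
where in the last step I used $f(g^{-1}(gu+gs)) = f(u+s)$ and the identification of the integral over $y$ with the integral over $x$ shifted by $s$. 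Comparing this with the defining relation $\int_{gy} f(g^{-1}t)\,dt = |J_g(y)|\int_y f(t)\,dt$ for the affine plane (the analogue of (3.2), which is how $J_g(y)$ is defined in the first place) forces $|J_g(y)| = |J_g(x)|$.

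The only subtlety — and the step I would be most careful about — is making sure the affine Jacobian $J_g(y)$ is defined consistently so that the comparison is legitimate: one must check that the change-of-variables identity (3.2) is being used with the same normalization of Lebesgue measure on $y$ as on $x$, namely the one induced by the isometric identification $x \to y$, $t\mapsto t+s$. Once that bookkeeping is in place, the result is essentially immediate, because the action of $g$ on the affine plane $y = x+s$ is the composition of the translation $t\mapsto t-s$ (measure-preserving), the linear map $u\mapsto gu$ on $x$ (Jacobian $J_g(x)$), and the translation $v\mapsto v+gs$ (measure-preserving), so the multiplicative property of Jacobians gives $|J_g(y)| = |J_g(x)|$ at once. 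As in the preceding lemmas, it suffices to verify the identity on characteristic functions and then pass to simple functions and to general $f\in L^1(y)$ by monotone approximation.
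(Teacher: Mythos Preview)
Your proof is correct and follows essentially the same approach as the paper's: both arguments use the defining change-of-variables identity (3.2) once on the linear subspace $x$ and once (as the definition of $J_g(y)$) on the affine plane $y$, together with the measure-preserving translations $t\mapsto t\pm gs$, to conclude $|J_g(y)|=|J_g(x)|$. The paper runs the chain starting from $\int_y f$ and ending at $\int_{gy} f(g^{-1}\cdot)$, while you run it in the opposite direction, but the content is identical.
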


\begin{proof}
\begin{align*}
	\int_{gy} f(g^{-1} z) \,\, |J_g(y)|^{-1} dz &= \int_{x+s} f(z) dz = \int_{x} f(t+s) dt \\
	&= \int_{gx} f(g^{-1}t+s) \,\, |J_g(x)|^{-1} dt \\
	&= \int_{g(x+s)} f(g^{-1}z) \,\, |J_g(x)|^{-1} dz \\
	&= \int_{gy} f(g^{-1}z) \,\, |J_g(x)|^{-1} dz \, ,
\end{align*}
where we again employed $(3.2)$.
\end{proof}

\begin{lemma}\label{lemma_cvoag}
Let $g\in \SL(n)$ and $f \in L^1(M_{n,k})$. Then 
$$ \int_{M_{n,k}} f(gy)  \,\, |J_g(y)|^{-(n+1)} dy = \int_{M_{n,k}} f(y)  \,\, dy. $$
\end{lemma}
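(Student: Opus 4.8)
The plan is to reduce the claim to the Grassmannian invariance already in hand (Lemma~\ref{lemma_antcvf}) by disintegrating $\nu_{n,k}$ over $G_{n,k}$ via its defining formula. By splitting $f$ into positive and negative parts and approximating by increasing simple functions, it suffices to treat $f\ge 0$, so that Tonelli's theorem may be applied freely throughout.

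First I would unfold the left-hand side. By the definition of $\nu_{n,k}$, every $y\in M_{n,k}$ is written $y=E+s$ with $E\in G_{n,k}$ and $s\in E^{\perp}$, and $\int_{M_{n,k}}\Phi(y)\,dy=\int_{G_{n,k}}\int_{E^{\perp}}\Phi(E+s)\,ds\,dE$. Applying this with $\Phi(y)=f(gy)\,|J_g(y)|^{-(n+1)}$ and invoking Lemma~\ref{lemma_jfas} (so that $|J_g(E+s)|=|J_g(E)|$ does not depend on $s$) gives
\[
\int_{M_{n,k}}f(gy)\,|J_g(y)|^{-(n+1)}\,dy=\int_{G_{n,k}}|J_g(E)|^{-(n+1)}\Bigl(\int_{E^{\perp}}f\bigl(g(E+s)\bigr)\,ds\Bigr)\,dE .
\]

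The heart of the argument is the inner integral, for fixed $E$. The affine plane $g(E+s)=gE+gs$ has direction $gE$, and, written with its foot on $(gE)^{\perp}$, it equals $gE+A_E s$, where $A_E:=P_{(gE)^{\perp}}\circ g|_{E^{\perp}}\colon E^{\perp}\to(gE)^{\perp}$ is a linear isomorphism (injective because $g(E^{\perp})\cap gE=\{0\}$). Changing variables $t=A_E s$ yields $\int_{E^{\perp}}f(g(E+s))\,ds=|\det A_E|^{-1}\int_{(gE)^{\perp}}f(gE+t)\,dt$, and the key identity is $|\det A_E|=|J_g(E)|^{-1}$. I would prove this by choosing orthonormal bases of $E,\,E^{\perp}$ and of $gE,\,(gE)^{\perp}$: since $g(E)=gE$, the matrix of $g$ in these bases is block upper triangular with diagonal blocks the matrices of $g|_E\colon E\to gE$ and of $A_E$, whence $1=|\det g|=|J_g(E)|\,|\det A_E|$; this is the affine-Grassmannian analogue of Lemma~\ref{lemma_jfoc}. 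I expect this determinant bookkeeping to be the only genuinely non-formal step — in particular one must resist confusing $g(E^{\perp})$ with $(gE)^{\perp}$, which are in general different subspaces, so the transverse Jacobian that enters is $|\det A_E|$, not the $(n-k)$-dimensional Jacobian of $g$ on $E^{\perp}$.

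Combining the above and using $|J_g(E)|^{-(n+1)}|\det A_E|^{-1}=|J_g(E)|^{-n}$, the right-hand side becomes $\int_{G_{n,k}}|J_g(E)|^{-n}\phi(gE)\,dE$ with $\phi(E'):=\int_{(E')^{\perp}}f(E'+t)\,dt$. By Lemma~\ref{lemma_jfgx} together with the identification (\ref{eq_rsorirn}) we have $|J_g(E)|^{-n}=a(gE)^{-2\rho}$, so Lemma~\ref{lemma_antcvf} gives $\int_{G_{n,k}}\phi(gE)\,|J_g(E)|^{-n}\,dE=\int_{G_{n,k}}\phi(E)\,dE$; and by the definition of $\nu_{n,k}$ this last quantity equals $\int_{G_{n,k}}\int_{E^{\perp}}f(E+t)\,dt\,dE=\int_{M_{n,k}}f(y)\,dy$, which is the assertion. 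Everything except the determinant identity is unfolding, Tonelli, and the two invariances already established.
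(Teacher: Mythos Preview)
Your argument is correct and follows essentially the same route as the paper: disintegrate $\nu_{n,k}$ over $G_{n,k}$, perform the transverse change of variable to pick up the factor $|J_g(E)|$, and then invoke Lemma~\ref{lemma_antcvf} (via Lemma~\ref{lemma_jfgx} and \eqref{eq_rsorirn}) for the remaining $|J_g(E)|^{-n}$. The only cosmetic difference is that the paper packages the transverse Jacobian through Lemma~\ref{lemma_jfoc}, whereas you compute $|\det A_E|=|J_g(E)|^{-1}$ directly from the block-triangular form of $g$ in bases adapted to $E\oplus E^{\perp}$ and $gE\oplus (gE)^{\perp}$; your explicit distinction between $g(E^{\perp})$ and $(gE)^{\perp}$ is a nice point of care.
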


\begin{proof}
\begin{align*}
	\int_{M_{n,k}} f(gy) dy 
	&= \int_{G_{n,k}} \int_{x^{\perp}} f(g(x+s)) ds \,\, dx \\
	&= \int_{G_{n,k}} \int_{x^{\perp}} f(x+gs) ds  \,\, |J_{g^{-1}}(x)|^{-n} dx \\
	&= \int_{G_{n,k}} \int_{gx^{\perp}} f(x+s)  \,\, |J_g(x^{\perp})|^{-1} ds  \,\, |J_g(x)|^n dx \\
	&= \int_{G_{n,k}} \int_{gx^{\perp}} f(x+s) ds  \,\, |J_g(x)|^{n+1} dx \\
	&= \int_{M_{n,k}} f(y)  \,\, |J_g(y)|^{n+1} dy  \, ,
\end{align*}
where the second equality follows by Lemma \ref{lemma_antcvf} along with Lemma \ref{lemma_jfgx}. 
\end{proof}

\begin{proof}[Proof of Theorem \ref{th_aiagr}]
The result follows by an analogous computation to the one in the proof of Theorem \ref{th_aigr}.
\end{proof}

For related affine invariance properties, see \cite{Christ_Radon} and \cite{Druout_kplane}.


\section{Functional forms of isoperimetric inequalities}

\label{section:functional}

We start by recalling the main result from \cite{PaoPiv_probtake}.
For positive integers $k, n$ and vectors $x_1,\ldots,x_k$ in $\R^n$,
we view the $k\times n$ matrix $[x_1\cdots x_k]$ as an operator from
$\R^k$ to $\R^n$. If $C\subset\R^k$, then
\begin{equation}
  [x_1\cdots x_k]C = \left\{\sum_{i=1}^k c_i x_i: c=(c_i)\in
  C\right\}.
\end{equation}
For example, if $C=\conv{0,e_1,\ldots,e_k}$, then
\begin{equation}
  \label{eqn:conv0}
[x_1\cdots x_k]\conv{0,e_1,\ldots,e_k} = \conv{0,x_1,\ldots,x_k}.
\end{equation}
Similarly, if $x_1,\ldots,x_{k+1}\in \R^n$ and we consider
$C=\conv{e_1,\ldots,e_{k+1}}\subset \R^{k+1}$, we have
\begin{equation*}
[x_1\cdots x_{k+1}]\conv{e_1,\ldots,e_{k+1}}=\conv{x_1,\ldots,x_{k+1}}.
\end{equation*}
If $\dim{C}$ denotes the dimension of the affine hull of $C$, then
$$\dim{([x_1\cdots x_k]C)} = \min(\mathop{\rm rank}([x_1\cdots x_k]),
\dim{C});$$ moreover, for almost every $x_1,\ldots,x_k\in \R^n$, we
have $\mathop{\rm rank}([x_1\cdots x_k]) = \min(k,n)$.

Let $f_1,\ldots,f_k$ be non-negative bounded, integrable functions on
$\R^n$ such that $\norm{f_i}_1>0$ for each $i=1,\ldots,k$. For a
compact, convex set $C\subset \R^k$ and $p\not=0$, set
\begin{eqnarray}
  \label{eqn:scriptF}
  \lefteqn{\mathcal{F}_{C,p}(f_1,\ldots,f_k)}\nonumber\\ & & =
  \left(\int_{\R^n}\cdots\int_{\R^n}\abs{[x_1\cdots x_k]C}^p
  \prod_{i=1}^k \frac{f_i(x_i)}{\norm{f_i}_1}dx_1\ldots
  dx_k\right)^{1/p}.
\end{eqnarray} 
Here $\abs{\cdot}$ denotes $m$-dimensional Lebesgue measure, where
$m=\min(k,n,\dim{C})$.

The main result from \cite{PaoPiv_probtake} (see Theorem 3.10 and
Section 4.1) is the following theorem.

\begin{theorem}
  \label{thm:PaoPivAdv}
Let $k$ and $n$ be positive integers and $C\subset \R^k$ a compact
convex set. Let $f_1,\ldots,f_k$ be non-negative integrable functions
such that $\norm{f_i}_1>0$ for $i=1,\ldots,k$. Then for each $p\not
=0$,
\begin{equation}
  \label{eqn:Adv1}
   \mathcal{F}_{C,p}(f_1,\ldots,f_k) \gr
   \mathcal{F}_{C,p}(f^*_1,\ldots,f^*_k).
\end{equation}
Moreover, if $\norm{f_i}_{\infty}\ls 1=\norm{f_i}_{1}$ for each
$i=1,\ldots,k$ and $p \gr 1$, then
\begin{equation}
  \label{eqn:Adv2}
   \mathcal{F}_{C,p}(f^*_1,\ldots,f^*_k) \gr
   \mathcal{F}_{C,p}(\mathds{1}_{D_n},\ldots,\mathds{1}_{D_n}).
\end{equation}
\end{theorem}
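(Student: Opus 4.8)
The plan is to derive \eqref{eqn:Adv1} from a single rearrangement inequality for the sub-level sets of the volume functional $(x_1,\dots,x_k)\mapsto\abs{[x_1\cdots x_k]C}$, and \eqref{eqn:Adv2} from a convexity/Jensen argument. For \eqref{eqn:Adv1} I would first apply the layer-cake representation \eqref{eqn:layer_cake} to each $f_i$, together with Fubini, to reduce to the case $f_i=\mathds{1}_{A_i}$ with $\abs{A_i}<\infty$ (legitimate since $\norm{f_i}_1=\norm{f_i^*}_1$ and $(\mathds{1}_A)^*=\mathds{1}_{A^*}$). For such indicator functions, \eqref{eqn:Adv1} would then follow from the single inequality
\[
\int_{(\R^n)^k}\mathds{1}_{\{\abs{[x_1\cdots x_k]C}<\beta\}}\prod_{i=1}^k\mathds{1}_{A_i}(x_i)\,dx\ \ls\ \int_{(\R^n)^k}\mathds{1}_{\{\abs{[x_1\cdots x_k]C}<\beta\}}\prod_{i=1}^k\mathds{1}_{A_i^*}(x_i)\,dx\qquad(\star)
\]
valid for all $A_i$ of finite measure and all $\beta>0$: when $p<0$ the function $v\mapsto v^p$ is decreasing, so $\abs{[x_1\cdots x_k]C}^p=\int_0^\infty\mathds{1}_{\{\abs{[x_1\cdots x_k]C}<\lambda^{1/p}\}}\,d\lambda$, and integrating $(\star)$ against $d\lambda$ gives \eqref{eqn:Adv1} after taking $p$-th roots (which reverse the inequality, as they should); when $p>0$, after truncating the $A_i$ to bounded sets and passing to the limit, one picks $M$ exceeding $\abs{[x_1\cdots x_k]C}^p$ on the relevant product sets, writes $\abs{[x_1\cdots x_k]C}^p=M-\int_0^M\mathds{1}_{\{\abs{[x_1\cdots x_k]C}<(M-s)^{1/p}\}}\,ds$ there, and uses that $\int\prod\mathds{1}_{A_i}=\prod\abs{A_i}=\prod\abs{A_i^*}$ is invariant under symmetrization to convert $(\star)$ into \eqref{eqn:Adv1}.

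The heart of the matter is $(\star)$, and this is the step I expect to be the main obstacle, since it rests on a not-obvious structural fact about the volume functional. I would prove it by Steiner symmetrizing the $A_i$ simultaneously along a common line $\ell$ and iterating over a dense set of directions (using the classical convergence of iterated Steiner symmetrizations); it then suffices to show that one such step does not decrease the right-hand side of $(\star)$. Writing $x_i=(t_i,y_i)$ with $t_i\in\ell$ and $y_i\in\ell^\perp$, the crucial observation is that for fixed $y=(y_1,\dots,y_k)$ the slice $\{t\in\R^k:\abs{[x_1\cdots x_k]C}<\beta\}$ is a centered (possibly empty or degenerate) solid ellipsoid: in the full-dimensional case with $k\ls n$ one has $\abs{[x_1\cdots x_k]C}^2=\abs{C}^2\det(\langle x_i,x_j\rangle)_{i,j=1}^{k}$, and since $\langle x_i,x_j\rangle=t_it_j+\langle y_i,y_j\rangle$ the matrix-determinant lemma expresses $\det(\langle x_i,x_j\rangle)_{i,j=1}^{k}$ as a positive-semidefinite quadratic form in $t$ plus a nonnegative constant. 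Hence the inner integral equals $\int_{\R^k}\mathds{1}_{B_2^k}(L_yt)\prod_i\mathds{1}_{A_i^{y_i}}(t_i)\,dt$ for a surjective linear map $L_y$, where $A_i^{y_i}=\{t:(t,y_i)\in A_i\}\subset\R$; applying Christ's form of the Rogers--Brascamp--Lieb--Luttinger inequality, keeping the already symmetric factor $\mathds{1}_{B_2^k}\circ L_y$ fixed and replacing each $\mathds{1}_{A_i^{y_i}}$ by its one-dimensional symmetric rearrangement, then integrating over $y$, yields the desired monotonicity along a Steiner step. The cases $k>n$ and $\dim C<\min(k,n)$ require a more careful description of these sub-level sets but lead to the same conclusion; this case analysis is the delicate part of the argument.

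Finally, for \eqref{eqn:Adv2} I would use $\norm{f_i^*}_\infty\ls1$ to write $f_i^*=\int_0^1\mathds{1}_{\rho_i(t)B_2^n}\,dt$ with $\rho_i:[0,1]\to[0,\infty)$ nonincreasing and $\int_0^1\omega_n\rho_i(t)^n\,dt=\norm{f_i^*}_1=1$, and similarly $\mathds{1}_{D_n}=\int_0^1\mathds{1}_{r_nB_2^n}\,dt$ with $\omega_nr_n^n=1$. Substituting and using Fubini,
\[
\mathcal{F}_{C,p}(f_1^*,\dots,f_k^*)^p=\int_{[0,1]^k}\Phi\big(\rho_1(t_1),\dots,\rho_k(t_k)\big)\,dt,\qquad \Phi(\rho):=\int_{(\R^n)^k}\abs{[x_1\cdots x_k]C}^p\prod_i\mathds{1}_{\rho_i B_2^n}(x_i)\,dx ,
\]
and rescaling $x_i=\rho_iu_i$ gives $\Phi(\rho)=\big(\prod_i\rho_i^n\big)\int_{(B_2^n)^k}\abs{[u_1\cdots u_k]\,\mathrm{diag}(\rho)C}^p\,du$. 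Passing to the variables $\psi_i=\rho_i^n$ turns $\Phi$ into a coordinatewise convex function $\widetilde{\Phi}(\psi_1,\dots,\psi_k)$ provided $p\gr1$ — in the full-dimensional case $\widetilde{\Phi}(\psi)=c(n,k,p,C)\prod_i\psi_i^{1+p/n}$, which is visibly coordinatewise convex, while the general case needs a short convexity lemma, and this is where $p\gr1$ enters. Since $\int_0^1\rho_i(t_i)^n\,dt_i=r_n^n$, applying Jensen's inequality one coordinate at a time gives
\[
\int_{[0,1]^k}\widetilde{\Phi}\big(\rho_1(t_1)^n,\dots,\rho_k(t_k)^n\big)\,dt\ \gr\ \widetilde{\Phi}(r_n^n,\dots,r_n^n)=\Phi(r_n,\dots,r_n)=\mathcal{F}_{C,p}(\mathds{1}_{D_n},\dots,\mathds{1}_{D_n})^p ,
\]
and taking $p$-th roots finishes the argument.
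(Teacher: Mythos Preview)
The paper does not prove Theorem~\ref{thm:PaoPivAdv}; it is quoted from \cite{PaoPiv_probtake}, and only the surrounding text (the introduction, Lemma~\ref{lemma:bathtub}, Lemma~\ref{lemma:avg_inc}, and the proof of Corollary~\ref{cor:B}) indicates the method used there. Your outline matches that method: for \eqref{eqn:Adv1}, the layer-cake reduction to indicators, Steiner symmetrization along a common line, and Christ's form of the Rogers--Brascamp--Lieb--Luttinger inequality applied to the centrally symmetric convex sub-level sets of $t\mapsto\abs{[x_1\cdots x_k]C}$ is exactly the argument of \cite{PaoPiv_probtake} alluded to in the introduction.

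For \eqref{eqn:Adv2} your Jensen step is a repackaging of, rather than an alternative to, the iterative bathtub argument via Lemma~\ref{lemma:bathtub}. Indeed, writing $\widetilde\Phi(\psi_1,\dots)=\omega_n\int_0^{\psi_1}\bar H(s^{1/n})\,ds$ with $\bar H(r)=\int_{S^{n-1}}\int\abs{[r\theta,x_2,\dots,x_k]C}^p\prod_{j\gr 2}\mathds{1}_{\rho_jB_2^n}(x_j)\,dx\,d\sigma(\theta)$ shows that coordinatewise convexity of $\widetilde\Phi$ is \emph{equivalent} to monotonicity of $r\mapsto\bar H(r)$, which is precisely Lemma~\ref{lemma:avg_inc} (stated there for the simplex; the general $C$ is \cite[Lemmas~3.7,~3.8]{PaoPiv_probtake}). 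So both presentations rest on the same two nontrivial ingredients, and where you say ``needs a short convexity lemma, and this is where $p\gr 1$ enters'' you are invoking exactly the lemma the paper records.
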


Under suitable assumptions on $C$, the condition $p\gr 1$ can be
relaxed (e.g., when $\abs{[x_1\cdots x_k]C}^p$ is coordinate-wise
increasing analogous to \cite[Lemma 4.3]{CFPP}).

In \cite{PaoPiv_probtake}, the latter result was stated with the
additional assumption that $k\gr n$ and that $C\subset \R^k$ is a
convex body (so $\dim{C}=k$). In fact, the argument given there works
for any positive integer $k$ and any compact convex set $C\subset
\R^k$.  If $k\ls n$ and $\dim{C}=k$, the matrix $X=[x_1\cdots x_k]$
represents an embedding from $\R^k$ into $\R^n$ and
\begin{equation}
  \label{eqn:simplex_id}
  \abs{[x_1\cdots x_k]C} = \mathop{\rm det}(X^*X)^{1/2}\abs{C};
\end{equation}see, e.g., \cite[Chapter 3]{EG}.
In this case, the quantities $\mathcal{F}^p_{C,p}(f_1,\ldots,f_k)$ are
all multiples of
\begin{equation*}
  \int_{\R^n}\cdots\int_{\R^n} \abs{\conv{0,x_1,\ldots,x_k}}^p
  \prod_{i=1}^k \frac{f_i(x_i)}{\norm{f_i}_1} dx_1\ldots dx_k
\end{equation*}
(cf. (\ref{eqn:conv0})).  When $k>n$, the geometry of $C$ plays a more
significant role, and choosing $C$ suitably gives rise to a number of
isoperimetric inequalities (which was our main interest in
\cite{PaoPiv_probtake}).

It will be useful to have a non-normalized variant of Theorem
\ref{thm:PaoPivAdv} which relaxes the assumption $\norm{f_i}_{\infty}
\ls 1 = \norm{f_i}_1$.  In fact, there are several such variants,
depending on the homogeneity properties of the integrand in
(\ref{eqn:scriptF}).

For subsequent reference, we record two basic identities concerning
the volume of the sets $[x_1\cdots x_k]C$, where $x_1,\ldots,x_k\in
\R^n$, and $C\subset \R^k$ is a compact convex set. Note first that
for each $a>0$,
 \begin{equation}
   \label{eqn:homo_1}
  \abs{[ax_1\cdots a x_k]C} = a^{m} \abs{[x_1\cdots x_k]C},
\end{equation}
 where $m=\min(\mathop{\rm rank}([x_1\cdots x_k]), \dim{C})$.
 Moreover, if $k\ls n$ and $\dim{C} =k$, and $a_1,\ldots,a_k\in
 \R^{+}$, then
\begin{equation}
  \label{eqn:homo_2}
  \abs{[a_1 x_1\cdots a_k x_k]C} = a_1\cdots a_k
  \abs{[x_1\cdots x_k]C},
\end{equation}which follows from (\ref{eqn:simplex_id}).

\begin{corollary}
  \label{cor:B}
  Let $1\ls k \ls n$ and $f_1,\ldots,f_{k}$ be non-negative, bounded
  integrable functions on $\R^n$ such that $\norm{f_i}_1>0$ for each
  $i=1,\ldots,k$.  For $p\in \R$, set
  \begin{equation}
    \label{eqn:cor:B}
  \Delta^0_{p}(f_1,\ldots,f_k) =
  \int_{\R^n}\cdots\int_{\R^n}\abs{\conv{0,x_1,\ldots,x_k}}^p
  \prod_{i=1}^k f_i(x_i)dx_1\ldots dx_k.
\end{equation}
  Then for $p>0$,
  \begin{equation}
    \label{eqn:B}
    \Delta^0_{p}(f_1,\ldots,f_k) \gr
    \left(\prod_{i=1}^k\frac{\norm{f_i}_1^{1+p/n}}
         {\omega_{n}^{1+p/n}\norm{f_i}_{\infty}^{p/n}}\right)
         \Delta^0_{p}(\mathds{1}_{B_2^n},\ldots,\mathds{1}_{B_2^n}).
   \end{equation}
When $-(n-k+1)<p<0$, the inequality is reversed.  Assume additionally
that $\{x: f_i(x)=\norm{f_i}_{\infty}\}$ is a bounded subset of $\R^n$
for $i=1,\ldots,k$ and $p\not =0$. Then equality holds in
(\ref{eqn:B}) for $k=n$ if and only if there is an origin-symmetric
ellipsoid $\mathcal{E}$ and positive constants $a_i$, $b_i$, such that
$f_i=a_i\mathds{1}_{b_i\mathcal{E}}$ a.e. for $i=1,\ldots,k$; for
$k<n$, equality holds in (\ref{eqn:B}) if and only if there are
positive constants $a_i$, $b_i$, such that $f_i=a_i\mathds{1}_{b_i
  B_2^n}$ a.e. for $i=1,\ldots,k$.
\end{corollary}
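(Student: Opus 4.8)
The plan is to deduce Corollary~\ref{cor:B} from Theorem~\ref{thm:PaoPivAdv} by a scaling (homogenization) argument, using the volume identities \eqref{eqn:homo_1} and \eqref{eqn:homo_2} to track how the relevant integrals transform. First I would reduce to the normalized setting: given $f_1,\dots,f_k$ with $0<\norm{f_i}_1<\infty$ and $\norm{f_i}_\infty<\infty$, set $g_i(x) = \norm{f_i}_\infty^{-1} f_i(\lambda_i x)$ for a suitable dilation parameter $\lambda_i>0$ chosen so that $\norm{g_i}_1 = 1$; since $\norm{g_i}_\infty \le 1$ automatically, the pair $(g_i)$ satisfies the hypotheses of Theorem~\ref{thm:PaoPivAdv}. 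The explicit value is $\lambda_i = (\norm{f_i}_1/\norm{f_i}_\infty)^{1/n}$, obtained from $\norm{g_i}_1 = \lambda_i^{-n}\norm{f_i}_\infty^{-1}\norm{f_i}_1$. Then I would express $\Delta^0_p(f_1,\dots,f_k)$ in terms of $\mathcal{F}_{C,p}$ with $C = \conv{0,e_1,\dots,e_k}$: by \eqref{eqn:conv0} the integrand involves $\abs{\conv{0,x_1,\dots,x_k}}^p = \abs{[x_1\cdots x_k]C}^p$, and changing variables $x_i \mapsto \lambda_i x_i$ in each slot, the homogeneity \eqref{eqn:homo_2} (valid here since $k\le n$, $\dim C = k$) pulls out a factor $(\lambda_1\cdots\lambda_k)^p$, while the Jacobians and the normalizing constants $\norm{f_i}_\infty, \norm{f_i}_1$ combine to give
\begin{equation*}
\Delta^0_p(f_1,\dots,f_k) = \Big(\prod_{i=1}^k \norm{f_i}_\infty \lambda_i^n\Big)\Big(\prod_{i=1}^k \lambda_i^p\Big)\, \mathcal{F}^p_{C,p}(g_1,\dots,g_k)\prod_{i=1}^k\norm{g_i}_1,
\end{equation*}
and since $\norm{g_i}_1 = 1$ and $\norm{f_i}_\infty \lambda_i^n = \norm{f_i}_1$, this simplifies to $\Delta^0_p(f_1,\dots,f_k) = \big(\prod_i \norm{f_i}_1 \lambda_i^p\big)\,\mathcal{F}^p_{C,p}(g_1,\dots,g_k)$.

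Next I would apply Theorem~\ref{thm:PaoPivAdv}. For $p>0$ (so $p\ge$ the range where the monotonicity holds, after noting that for this particular $C = \conv{0,e_1,\dots,e_k}$ the integrand is coordinate-wise increasing, so by the remark after Theorem~\ref{thm:PaoPivAdv} and \cite[Lemma 4.3]{CFPP} the restriction $p\ge 1$ can be dropped), inequalities \eqref{eqn:Adv1} and \eqref{eqn:Adv2} give $\mathcal{F}^p_{C,p}(g_1,\dots,g_k) \ge \mathcal{F}^p_{C,p}(\mathds{1}_{D_n},\dots,\mathds{1}_{D_n})$. Unwinding the same scaling identity in reverse with $f_i = \mathds{1}_{B_2^n}$ — for which $\norm{f_i}_1 = \omega_n$, $\norm{f_i}_\infty = 1$, hence $\lambda_i = \omega_n^{1/n} = r_n^{-1}$ and $g_i = \mathds{1}_{D_n}$ — yields $\mathcal{F}^p_{C,p}(\mathds{1}_{D_n},\dots,\mathds{1}_{D_n}) = (\prod_i \omega_n \lambda_i^p)^{-1}\Delta^0_p(\mathds{1}_{B_2^n},\dots,\mathds{1}_{B_2^n}) = \omega_n^{-k}r_n^{kp}\,\Delta^0_p(\mathds{1}_{B_2^n},\dots)$. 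Substituting back and collecting the powers of $\lambda_i = (\norm{f_i}_1/\norm{f_i}_\infty)^{1/n}$ produces exactly the factor $\prod_i \norm{f_i}_1^{1+p/n}/(\omega_n^{1+p/n}\norm{f_i}_\infty^{p/n})$ in \eqref{eqn:B}. For $-(n-k+1)<p<0$ the integral $\Delta^0_p$ is still finite (this is the range where $\abs{\conv{0,x_1,\dots,x_k}}^p$ is locally integrable, cf.\ the Blaschke--Petkantschin exponent in Theorem~\ref{thm:BPGnk}), the map $t\mapsto t^p$ is decreasing, and \eqref{eqn:Adv1}--\eqref{eqn:Adv2} reverse accordingly — one must check that Theorem~\ref{thm:PaoPivAdv} or its cited extension still applies for negative $p$, which it does since \eqref{eqn:Adv1} is stated for all $p\ne 0$ and \eqref{eqn:Adv2} for this $C$ extends to $p$ in this range by the same $[CFPP]$-type monotonicity argument.

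For the equality cases, I would invoke the equality characterizations underlying Theorem~\ref{thm:PaoPivAdv}, i.e.\ the equality cases in the Rogers--Brascamp--Lieb--Luttinger / Christ rearrangement inequality behind \eqref{eqn:Adv1} and in Busemann's random simplex inequality (and Groemer's variant) behind \eqref{eqn:Adv2}, as recorded in \cite{PaoPiv_probtake} and \cite{Gardner_dual}, \cite{Pfiefer}. Equality in \eqref{eqn:B} forces equality in both \eqref{eqn:Adv1} and \eqref{eqn:Adv2} for the normalized functions $g_i$; equality in the rearrangement step forces each $g_i$ (hence each $f_i$) to be, up to a.e.\ equivalence, a radially symmetric decreasing function that is ``already its own rearrangement up to the relevant symmetries,'' and the boundedness assumption on $\{f_i = \norm{f_i}_\infty\}$ is what rules out degenerate maximizers and pins the level sets down to genuine ellipsoids/balls; equality in \eqref{eqn:Adv2} (the Busemann/Groemer step) then forces the common level set to be an ellipsoid when $k=n$ and a Euclidean ball when $k<n$ (the latter rigidity is exactly the $k<n$ phenomenon in Gardner's treatment, where only the round ball is extremal because the simplex volume is being integrated over an embedded $k$-subspace). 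Unscaling, $f_i = a_i \mathds{1}_{b_i\mathcal E}$ (resp.\ $a_i\mathds{1}_{b_i B_2^n}$) a.e. The main obstacle is precisely this last part: transferring the equality characterizations from the $\mathds{1}_K$ setting of \cite{Gardner_dual}, \cite{Pfiefer}, \cite{Pfiefer90} to general bounded integrable $f_i$ — one has to argue via the layer-cake representation \eqref{eqn:layer_cake} that equality at the level of functions propagates to (almost) every superlevel set, then apply the set-theoretic equality cases level by level and reconstruct the $f_i$; the boundedness hypothesis on the top level set is the technical device that makes this reconstruction clean, and handling the $k=n$ versus $k<n$ dichotomy (ellipsoids vs.\ balls, independent dilations $b_i$ allowed vs.\ a single common ellipsoid) requires care about which linear symmetries survive.
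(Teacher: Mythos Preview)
Your derivation of the inequality \eqref{eqn:B} is essentially the paper's argument: the same rescaling $\bar f_i(x)=f_i(a_i x)/\!\int f_i(a_i y)\,dy$ with $a_i=(\norm{f_i}_1/\norm{f_i}_\infty)^{1/n}$, the same use of homogeneity \eqref{eqn:homo_2} for $C=\conv{0,e_1,\dots,e_k}$, and the same appeal to Theorem~\ref{thm:PaoPivAdv}. The extension from $p\gr 1$ to all $p>0$ (and the reversal for $-(n-k+1)<p<0$) via the coordinate-wise monotonicity of $r\mapsto\abs{\conv{0,x_1,\dots,x_{j-1},r\theta,x_{j+1},\dots,x_k}}^p$ is also what the paper does, though the paper states it explicitly through a ``bathtub'' lemma (Lemma~\ref{lemma:bathtub}) applied in spherical coordinates rather than citing the remark after Theorem~\ref{thm:PaoPivAdv}.

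Where your plan has a genuine gap is the equality analysis. You propose to ``invoke the equality characterizations underlying Theorem~\ref{thm:PaoPivAdv} \dots\ as recorded in \cite{PaoPiv_probtake},'' but the paper explicitly notes that \emph{no} equality cases are stated in Theorem~\ref{thm:PaoPivAdv}; there is nothing to invoke. Your fallback---propagating equality through the layer-cake decomposition to every superlevel set---is plausible in principle but, as you yourself flag, is the ``main obstacle,'' and you do not indicate how to carry it out. The paper avoids this entirely with a much cleaner idea: equality in \eqref{eqn:B} forces equality in the comparison $\Delta_p^0(\bar f_1^*,\dots,\bar f_k^*)=\Delta_p^0(\mathds{1}_{D_n},\dots,\mathds{1}_{D_n})$, and the \emph{strict} equality condition in the bathtub lemma (applied to the strictly monotone radial function $r\mapsto\abs{\conv{0,x_1,\dots,x_{k-1},r\theta}}^p$) immediately forces each $\bar f_i^*$ to equal $\mathds{1}_{D_n}$ a.e. Hence each $\bar f_i$ is already the indicator of a volume-one set $K_i$; the hypothesis that $\{f_i=\norm{f_i}_\infty\}$ is bounded then makes $K_i$ bounded, and one is reduced to the set-theoretic equality cases handled by Gardner and Pfiefer. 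So the missing ingredient in your outline is precisely this: use the strict monotonicity together with the bathtub equality condition to collapse the functional problem to indicators in one stroke, rather than attempting a level-set-by-level-set reconstruction.
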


The condition $-(n-k+1)<p$ is needed for integrability. Since we treat
the equality cases in the latter corollary but there is no discussion
of equality cases in Theorem \ref{thm:PaoPivAdv}, it will be useful to
recall one step in the proof of (\ref{eqn:Adv2}). The basic ingredient
is the next lemma, see e.g., \cite[Lemma 3.5]{PaoPiv_probtake},
\cite[Proof of Lemma 4.3]{CFPP}; the equality condition is not stated
in the latter articles but it is easily obtained from the
proofs. Here, as above, $r_n=\omega_n^{-1/n}$ is the radius of the
Euclidean ball $D_n$ of volume one.

\begin{lemma}
  \label{lemma:bathtub}
  Let $f:\R^{+}\rightarrow [0,1]$ and suppose that
  $\int_{0}^{\infty}f(r)r^{n-1}dr = \int_0^{r_n} r^{n-1}dr$. Then for
  any increasing function $\phi:\R^{+}\rightarrow \R^{+}$, we have
  \begin{equation}
    \int_0^{\infty}\phi(r)f(r)r^{n-1}dr \gr \int_0^{r_n} \phi(r)
    r^{n-1}dr.
  \end{equation}
  If $\phi$ is strictly increasing, then equality holds if and only if
  $f=\mathds{1}_{[0,r_n]}$ a.e.
\end{lemma}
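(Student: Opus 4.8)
The statement to prove is Lemma~\ref{lemma:bathtub}, the one-dimensional ``bathtub principle'' with its equality case. The plan is to reduce the inequality to a statement about rearranging mass of the weight $f(r)r^{n-1}\,dr$ on $\R^+$, where among all densities bounded by $r^{n-1}$ with a prescribed total mass, the one concentrated closest to the origin minimizes the integral against an increasing function $\phi$. Concretely, write $g(r) := (1-f(r))r^{n-1}\mathds{1}_{[0,r_n]}(r)$ and $h(r) := f(r)r^{n-1}\mathds{1}_{(r_n,\infty)}(r)$; the hypothesis $\int_0^\infty f(r)r^{n-1}\,dr = \int_0^{r_n} r^{n-1}\,dr$ says exactly that $\int_0^\infty g = \int_0^\infty h$, i.e.\ the mass ``removed'' from $[0,r_n]$ (relative to the full weight $r^{n-1}$) equals the mass ``added'' beyond $r_n$.

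The key computation is then
\begin{align*}
\int_0^{\infty}\phi(r)f(r)r^{n-1}\,dr - \int_0^{r_n}\phi(r)r^{n-1}\,dr
&= \int_{r_n}^{\infty}\phi(r)h(r)\,dr - \int_0^{r_n}\phi(r)g(r)\,dr \\
&\gr \phi(r_n)\int_{r_n}^{\infty}h(r)\,dr - \phi(r_n)\int_0^{r_n}g(r)\,dr \\
&= \phi(r_n)\left(\int_0^\infty h - \int_0^\infty g\right) = 0,
\end{align*}
where the middle inequality uses that $\phi(r)\gr\phi(r_n)$ on the support of $h$ (since $\phi$ is increasing and $r>r_n$ there) and $\phi(r)\ls\phi(r_n)$ on the support of $g$ (since $r\ls r_n$ there). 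This is just splitting $[0,\infty)$ at $r_n$ and comparing $\phi$ to its value at the cut point. One subtlety to address is the integrability of $\int_0^\infty \phi(r)f(r)r^{n-1}\,dr$: if it diverges, the inequality is trivial; if it converges, then since $0\ls g(r)r^{-(n-1)}\ls 1$ and $g$ is supported on the bounded set $[0,r_n]$, the term $\int_0^{r_n}\phi g$ is finite, so the rearrangement above is legitimate. One should also note $0\ls f\ls 1$ guarantees $g,h\gr 0$.

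For the equality case, assume $\phi$ is strictly increasing and that equality holds. The inequality in the display becomes an equality only if both $\int_{r_n}^\infty(\phi(r)-\phi(r_n))h(r)\,dr = 0$ and $\int_0^{r_n}(\phi(r_n)-\phi(r))g(r)\,dr = 0$. In the first integral the integrand is nonnegative and $\phi(r)-\phi(r_n)>0$ for a.e.\ $r>r_n$ (strict monotonicity), forcing $h = 0$ a.e., i.e.\ $f(r) = 0$ for a.e.\ $r>r_n$. In the second, $\phi(r_n)-\phi(r)>0$ for a.e.\ $r<r_n$, forcing $g = 0$ a.e., i.e.\ $f(r) = 1$ for a.e.\ $r<r_n$. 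Together these give $f = \mathds{1}_{[0,r_n]}$ a.e.; conversely this choice evidently attains equality. The main obstacle is essentially bookkeeping rather than conceptual: being careful that the supports and the splitting are handled so the strict-monotonicity argument applies a.e., and checking that the possible divergence of the left-hand integral does not obstruct the equality analysis (it cannot, since equality presupposes the left side is finite and equal to the finite right side).
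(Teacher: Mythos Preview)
Your proof is correct and is the standard ``bathtub'' argument. The paper itself does not supply a proof of this lemma; it states the result as well known and cites \cite[Lemma 3.5]{PaoPiv_probtake} and \cite[Proof of Lemma 4.3]{CFPP}, remarking that the equality condition, while not stated there, is easily extracted. Your write-up does exactly that extraction, so there is nothing to compare against and nothing to fix.
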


\begin{proof}[Proof of Corollary \ref{cor:B}]For $i=1,\ldots,k$, let 
  $a_i = (\norm{f_i}_1/\norm{f_i}_{\infty})^{1/n}$ and let
\begin{equation}
  \bar{f_i}(x) = \frac{f_i(a_ix)}{\int_{\R^n}f_i(a_iy)dy}.
\end{equation}
Then $\norm{\bar{f_i}}_{1}=\norm{\bar{f_i}}_{\infty}=1$. Using
homogeneity property (\ref{eqn:homo_2}) for the set
$C=\conv{0,e_1,\ldots,e_k}$, we have
\begin{eqnarray}
  \label{eqn:Delta01}
  \Delta_p^0(f_1,\ldots,f_k)& = & 
  \mathcal{F}^p_{C,p}(f_1,\ldots,f_k)\prod_{i=1}^k\norm{f_i}_1 \\
  \label{eqn:Delta02}
  &= & \mathcal{F}^p_{C,p}(\bar{f_1},\ldots,\bar{f_k})\prod_{i=1}^k\frac{\norm{f_i}^{1+p/n}}{\norm{f_i}_{\infty}^{p/n}}.
\end{eqnarray}
Repeating the latter identities for $f_i=\mathds{1}_{B_2^n}$ and
$\bar{f_i}=\mathds{1}_{D_n}$, $i=1,\ldots,k$ and applying Theorem
\ref{thm:PaoPivAdv} gives the desired inequality for $p \gr 1$. Next,
for $p>0$, let $$F(x_1,\ldots,x_k) =
\abs{\conv{0,x_1,\ldots,x_k}}^p.$$ Fix $x_1,\ldots, x_{k-1}\in \R^n$
and $\theta \in S^{n-1}$ such that $x_1,\ldots,x_{k-1}, \theta$ are
linearly independent. Then
  \begin{equation*}
    \R^{+}\ni r\mapsto F(x_1,\ldots,x_{k-1}, r \theta)
  \end{equation*} 
is strictly increasing (cf. (\ref{eqn:homo_2})). For $i=1,\ldots,k$,
let $\bar{f_i}^*=(\bar{f_i})^*$.  One can now extend the result to
$p>0$ by writing $\Delta_p^0(\bar{f_1}^*,\ldots,\bar{f_k}^*)$ in
spherical coordinates and applying Lemma \ref{lemma:bathtub}
iteratively (as in, e.g., \cite[Lemma 4.3]{CFPP}). 

Towards the equality cases, assume that equality holds in
(\ref{eqn:B}). It follows from (\ref{eqn:Delta01}) and
(\ref{eqn:Delta02}) that
\begin{equation}
  \Delta_p^0(\bar{f_1}, \ldots, \bar{f_k}) =
  \Delta_p^0(\mathds{1}_{D_n}, \ldots, \mathds{1}_{D_n}).
\end{equation} 
Furthermore, it follows from Theorem \ref{thm:PaoPivAdv} applied to
$C=\conv{0,e_1,\ldots, e_k}$, that
\begin{equation}
  \label{eqn:Delta0equality}
  \Delta_p^0(\bar{f_1}^*,\ldots,\bar{f_k}^*) =
  \Delta_p^0(\mathds{1}_{D_n},\ldots,\mathds{1}_{D_n}).
\end{equation}

We will first argue that for each $i=1,\ldots,k$, we must have
$\bar{f_i}^* = \mathds{1}_{D_n}$ a.e.  Suppose towards a contradiction
that the latter does not hold.  Without loss of generality, we may
assume that
$$\abs{\{x\in \R^n:\bar{f_k}^*(x)\not = \mathds{1}_{D_n}(x)\}} >0.$$
Then $h(r):=\bar{f_k}^*(r\theta)$ ($r>0$) is independent of $\theta\in
S^{n-1}$ and $h$ differs from $\mathds{1}_{[0,r_n]}$ on a subset of
positive measure.  The equality condition in Lemma \ref{lemma:bathtub}
implies
\begin{equation*}
  \int_{0}^{\infty} F(x_1,\ldots,x_{k-1},r\theta) h(r) r^{n-1}dr >
  \int_{0}^{r_n}F(x_1,\ldots,x_{k-1},r\theta) r^{n-1}dr.
\end{equation*} Integrating in $\theta\in S^{n-1}$,
\begin{eqnarray*}
  \lefteqn{\int_{S^{n-1}}\int_{0}^{\infty}
    F(x_1,\ldots,x_{k-1},r\theta) \bar{f_k}^*(r\theta) r^{n-1} dr
    d\sigma(\theta) }\\ & & >\int_{S^{n-1}}\int_{0}^{r_n}
  F(x_1,\ldots,x_{k-1},r\theta) r^{n-1} dr d\sigma(\theta).
\end{eqnarray*}  
In other words, for linearly independent $x_1,\ldots,x_{k-1}$, we have
\begin{equation*}
  \int_{\R^n} F(x_1,\ldots,x_{k-1},x_k) (\bar{f_k})^*(x_k) dx_k >
  \int_{D_n} F(x_1,\ldots,x_{k-1},x_k) dx_k.
\end{equation*}By continuity of $F$, we have
\begin{equation*}
\Delta_p^0(\bar{f_1}^*, \ldots,\bar{f_k}^*) >
\Delta_p^0(\mathds{1}_{D_n}, \ldots,\mathds{1}_{D_n}),
\end{equation*}which contradicts (\ref{eqn:Delta0equality}).

Thus we have shown that for each $i=1,\ldots,k$,
$\bar{f_i}^*=\mathds{1}_{D_n}$ a.e.  It follows that $\bar{f_i}=
\mathds{1}_{K_i}$, where $K_i$ is a measurable set of volume one, for
$i=1,\ldots, k$.  Since each set $\{f_i=\norm{f_i}_{\infty}\}$ is
bounded, so too is each $K_i$.  Thus we have reduced the equality
cases in (\ref{eqn:B}) to that of bounded, Borel measurable sets and
we appeal to the work of Gardner \cite[Corollary 4.2]{Gardner_dual}
which draws on Pfiefer \cite{Pfiefer}, \cite{Pfiefer90}; we note that
the latter articles state the equality conditions under the assumption
that $K_1=\ldots = K_k$, although it is explained in \cite[pgs
  69-70]{Pfiefer} that the same techniques apply when the bodies $K_i$
are not necessarily the same. 

The case $p<0$ is proved in the same way.
 \end{proof}

Another variant of Theorem \ref{thm:PaoPivAdv} is the following
result.

\begin{theorem}
  \label{thm:A}
  Let $k$ and $n$ be positive integers. Let $f$ be a non-negative
  bounded, integrable function on $\R^n$ with $\norm{f}_1>0$.  Let
  $C\subset \R^k$ be a compact convex set and let $p\gr 1$. Set
  $m=\min(k,n,\dim{C})$. Then  \begin{equation*}
    \mathcal{F}_{C,p}(f,\ldots,f) \gr
    \left(\frac{\norm{f}_1}{\omega_n\norm{f}_{\infty}}\right)^{m/n}
    \mathcal{F}_{C,p}(\mathds{1}_{B_2^n}, \ldots, \mathds{1}_{B_2^n}), 
  \end{equation*}
  where the arguments in $\mathcal{F}_{C,p}(\cdot,\ldots,\cdot)$ are
  repeated $k$-times.
\end{theorem}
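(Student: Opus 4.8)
\emph{Proof proposal.} The plan is to reduce, by a scaling normalization, to Theorem~\ref{thm:PaoPivAdv} applied to a function whose $L^1$- and $L^{\infty}$-norms are both equal to one, and then to undo the normalization.

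First I would record how $\mathcal{F}_{C,p}(f,\ldots,f)$ transforms under a common dilation of all its arguments. If $g(x)=f(ax)$ for some $a>0$, then substituting $x_i\mapsto x_i/a$ in the defining integral~(\ref{eqn:scriptF}) and using the homogeneity identity~(\ref{eqn:homo_1}) --- note that here one scales all $k$ vectors by the \emph{same} factor, so (\ref{eqn:homo_1}) suffices and no hypothesis such as $k\ls n$ or $\dim{C}=k$ is needed --- together with the evident invariance of $\mathcal{F}_{C,p}$ under multiplying each argument by a positive constant (the normalizing factors $\norm{f_i}_1$ cancel the scalars), yields
$$\mathcal{F}_{C,p}(g,\ldots,g)=a^{-m}\,\mathcal{F}_{C,p}(f,\ldots,f),\qquad m=\min(k,n,\dim{C}).$$

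Next, set $a=(\norm{f}_1/\norm{f}_{\infty})^{1/n}$ and $\bar f(x)=f(ax)/\int_{\R^n}f(ay)\,dy$, so that $\norm{\bar f}_1=\norm{\bar f}_{\infty}=1$; by the displayed scaling relation (applied with the dilation $x\mapsto ax$, followed by scalar invariance) one gets $\mathcal{F}_{C,p}(f,\ldots,f)=a^{m}\,\mathcal{F}_{C,p}(\bar f,\ldots,\bar f)$. Since $p\gr1$, Theorem~\ref{thm:PaoPivAdv} applies to $\bar f$: inequality~(\ref{eqn:Adv1}) gives $\mathcal{F}_{C,p}(\bar f,\ldots,\bar f)\gr\mathcal{F}_{C,p}(\bar f^{*},\ldots,\bar f^{*})$, and then~(\ref{eqn:Adv2}) gives $\mathcal{F}_{C,p}(\bar f^{*},\ldots,\bar f^{*})\gr\mathcal{F}_{C,p}(\mathds{1}_{D_n},\ldots,\mathds{1}_{D_n})$. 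Finally I would rewrite the ball of volume one in terms of $B_2^n$: since $\mathds{1}_{D_n}(x)=\mathds{1}_{B_2^n}(x/r_n)$ with $r_n=\omega_n^{-1/n}$, the same scaling relation (now with $a=\omega_n^{1/n}$) gives $\mathcal{F}_{C,p}(\mathds{1}_{D_n},\ldots,\mathds{1}_{D_n})=\omega_n^{-m/n}\,\mathcal{F}_{C,p}(\mathds{1}_{B_2^n},\ldots,\mathds{1}_{B_2^n})$. Chaining these three steps and substituting $a=(\norm{f}_1/\norm{f}_{\infty})^{1/n}$ turns the constant $a^{m}\omega_n^{-m/n}$ into $\bigl(\norm{f}_1/(\omega_n\norm{f}_{\infty})\bigr)^{m/n}$, which is the claimed inequality.

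I do not expect a serious obstacle here; the one point that requires care is that, because the $k$ arguments of $\mathcal{F}_{C,p}$ are forced to be the \emph{same} function $f$, one cannot rescale them independently as was done in the proof of Corollary~\ref{cor:B} via~(\ref{eqn:homo_2}) (which is available only when $k\ls n$ and $\dim{C}=k$). Restricting to a single common dilation and invoking only identity~(\ref{eqn:homo_1}) is exactly what keeps the argument valid for all $k$, all $n$, and every compact convex $C$, and is also what produces the exponent $m/n$ in the constant rather than $k/n$.
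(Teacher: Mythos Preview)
Your proposal is correct and follows essentially the same route as the paper: normalize $f$ by the dilation $a=(\norm{f}_1/\norm{f}_{\infty})^{1/n}$ so that $\norm{\bar f}_1=\norm{\bar f}_{\infty}=1$, invoke Theorem~\ref{thm:PaoPivAdv}, and undo the normalization using the homogeneity identity~(\ref{eqn:homo_1}); the paper compresses your last step by simply ``repeating the latter argument with $f=\mathds{1}_{B_2^n}$ and $\bar f=\mathds{1}_{D_n}$.'' Your closing remark explaining why only the common dilation (hence~(\ref{eqn:homo_1})) is available here, and why this produces the exponent $m/n$, is a nice clarification that the paper leaves implicit.
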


\begin{proof}Set  
  $a = (\norm{f}_1/\norm{f}_{\infty})^{1/n}$ and let
\begin{equation}
  \label{eqn:barf}
  \bar{f}(x) = \frac{f(ax)}{\int_{\R^n}f(ay)dy}.
\end{equation}
Then $\norm{\bar{f}}_{1}=\norm{\bar{f}}_{\infty}=1$. Using homogeneity
property (\ref{eqn:homo_1}), we have
\begin{equation*}
  \mathcal{F}_{C,p}(\bar{f},\ldots,\bar{f})=
  \left(\frac{\norm{f}_{\infty}}{\norm{f}_{1}}\right)^{m/n}
  \mathcal{F}_{C,p}(f,\ldots,f).
\end{equation*}
Repeating the latter argument with $f=\mathds{1}_{B_2^n}$ and
$\bar{f}=\mathds{1}_{D_n}$, and applying Theorem \ref{thm:PaoPivAdv}
gives the desired inequality.
\end{proof}

\begin{corollary}
  \label{cor:A}
  Let $1\ls k \ls n$ and let $f$ be a non-negative, bounded integrable
  function on $\R^n$ with $\norm{f}_1>0$.  For $p\not =0$, set
  \begin{equation}
    \Delta_p(f,\ldots,f) =
    \int_{\R^n}\cdots\int_{\R^n}\abs{\conv{x_1,\ldots,x_{k+1}}}^p
    \prod_{i=1}^{k+1} f(x_i)dx_1\ldots dx_{k+1}.
  \end{equation}
   Then for $p\gr 1$, 
  \begin{equation}
    \label{eqn:cor:A}
    \Delta_p(f,\ldots,f) \gr
    \frac{\norm{f}_1^{k+1+kp/n}}{\omega_{n}^{k+1+kp/n}\norm{f}_{\infty}^{kp/n}}
    \Delta_{p}(\mathds{1}_{B_2^n},\ldots,\mathds{1}_{B_2^n}).
   \end{equation}
Assume additionally that $\{f=\norm{f}_{\infty}\}$ is a bounded subset
of $\R^n$. Then equality holds in (\ref{eqn:cor:A}) when $k=n$ if and
only if there is an ellipsoid $\mathcal{E}$ and a positive constant
$a$ such that $f=a\mathds{1}_{\mathcal{E}}$ a.e.; when $k<n$, equality
holds if and only if there is a positive constant $a$ and a Euclidean
ball $B$ such that $f=a\mathds{1}_{B}$ a.e.
\end{corollary}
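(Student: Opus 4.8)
The plan follows the template of Corollary~\ref{cor:B}, with the ``pinned'' simplex $\conv{0,x_1,\ldots,x_k}$ replaced by the free simplex $\conv{x_1,\ldots,x_{k+1}}$. For the inequality (\ref{eqn:cor:A}), I would apply Theorem~\ref{thm:A} with $C=\conv{e_1,\ldots,e_{k+1}}\subset\R^{k+1}$, so that $[x_1\cdots x_{k+1}]C=\conv{x_1,\ldots,x_{k+1}}$, $\dim{C}=k$, and hence $m=\min(k+1,n,k)=k$ since $k\ls n$. Because $\mathcal{F}_{C,p}(f,\ldots,f)^{p}=\Delta_p(f,\ldots,f)/\norm{f}_1^{k+1}$, and the same identity holds with $f$ replaced by $\mathds{1}_{B_2^n}$, raising the conclusion of Theorem~\ref{thm:A} to the $p$th power and clearing denominators yields exactly (\ref{eqn:cor:A}); the hypothesis $p\gr1$ is the one required for Theorem~\ref{thm:A}.

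For the equality cases, the first step is the scaling reduction from the proof of Theorem~\ref{thm:A}: with $a=(\norm{f}_1/\norm{f}_{\infty})^{1/n}$ and $\bar f$ as in (\ref{eqn:barf}), one has $\norm{\bar f}_1=\norm{\bar f}_{\infty}=1$, and the homogeneity (\ref{eqn:homo_1}) gives $\Delta_p(f,\ldots,f)=\frac{\norm{f}_1^{k+1+kp/n}}{\norm{f}_{\infty}^{kp/n}}\Delta_p(\bar f,\ldots,\bar f)$, while the right-hand side of (\ref{eqn:cor:A}) equals the same prefactor times $\Delta_p(\mathds{1}_{D_n},\ldots,\mathds{1}_{D_n})$. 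Thus equality in (\ref{eqn:cor:A}) is equivalent to $\Delta_p(\bar f,\ldots,\bar f)=\Delta_p(\mathds{1}_{D_n},\ldots,\mathds{1}_{D_n})$. Since, by (\ref{eqn:Adv1}) and by Theorem~\ref{thm:A} applied to $\bar f$, we have $\Delta_p(\bar f,\ldots,\bar f)\gr\Delta_p(\bar f^*,\ldots,\bar f^*)\gr\Delta_p(\mathds{1}_{D_n},\ldots,\mathds{1}_{D_n})$, both inequalities become equalities; in particular $\Delta_p(\bar f^*,\ldots,\bar f^*)=\Delta_p(\mathds{1}_{D_n},\ldots,\mathds{1}_{D_n})$.

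The crux, and the step I expect to be the main obstacle, is to deduce from this that $\bar f^*=\mathds{1}_{D_n}$ a.e. Unlike the map $r\mapsto\abs{\conv{0,x_1,\ldots,x_{k-1},r\theta}}^p$ used in Corollary~\ref{cor:B}, the map $r\mapsto\abs{\conv{x_1,\ldots,x_k,r\theta}}^p$ need not be monotone, so Lemma~\ref{lemma:bathtub} cannot be applied directly in the last variable. The remedy is to average over rotations of the first argument: for affinely independent $y_1,\ldots,y_k$ one has $\abs{\conv{y_1,\ldots,y_k,x}}=c(y_1,\ldots,y_k)\,\mathrm{dist}\bigl(x,\mathrm{aff}\{y_1,\ldots,y_k\}\bigr)$ with $c(y_1,\ldots,y_k)>0$, and since $\bar f^*$ is radial, integrating the first variable of $\Delta_p$ in polar coordinates turns the inner integral into $\int_0^{\infty}\Psi_{y_1,\ldots,y_k}(r)\,\bar f^*(r)\,r^{n-1}dr$, where $\Psi_{y_1,\ldots,y_k}(r)=\int_{S^{n-1}}\abs{\conv{y_1,\ldots,y_k,r\omega}}^p\,d\sigma(\omega)$. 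Writing $\mathrm{dist}(r\omega,H)=\abs{rP_{L^\perp}\omega-w}$ ($L$ the direction of $H=\mathrm{aff}\{y_i\}$, $w$ its offset from the origin) and using the reflection $\omega\mapsto-\omega$, $\Psi$ is an average of $\tfrac12\bigl(\abs{rP_{L^\perp}\omega-w}^p+\abs{rP_{L^\perp}\omega+w}^p\bigr)$; for $p\gr1$ each summand is a nondecreasing function of $r\gr0$, by convexity of $t\mapsto\abs{t}^p$ together with the vanishing of its $r$-derivative at $0$, and is strictly increasing except possibly on an initial interval $[0,\delta]$. Lemma~\ref{lemma:bathtub} (applicable since $\bar f^*$ takes values in $[0,1]$ and $\int_{\R^n}\bar f^*=1=\abs{D_n}$), refined using the mass normalization to absorb the interval $[0,\delta]$, then forces $\int_0^{\infty}\Psi_{y_1,\ldots,y_k}\bar f^* r^{n-1}dr$ to exceed its ball value unless $\bar f^*=\mathds{1}_{D_n}$ a.e.; integrating over $y_1,\ldots,y_k$ against $\prod\bar f^*$, which (as $\bar f^*$ is radially decreasing, hence positive near the origin) charges affinely independent configurations with $\delta$ arbitrarily small, and comparing with $\Delta_p(\mathds{1}_{D_n},\ldots,\mathds{1}_{D_n})$ contradicts the equality above unless $\bar f^*=\mathds{1}_{D_n}$ a.e.

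Finally, $\bar f^*=\mathds{1}_{D_n}$ together with equimeasurability and $\norm{\bar f}_1=\norm{\bar f}_{\infty}=1$ forces $\bar f=\mathds{1}_K$ for a measurable set $K$ with $\abs{K}=1$, bounded because $\{f=\norm{f}_{\infty}\}$ is bounded; and the equality $\Delta_p(\bar f,\ldots,\bar f)=\Delta_p(\mathds{1}_{D_n},\ldots,\mathds{1}_{D_n})$ becomes $\int_{K^{k+1}}\abs{\conv{x_1,\ldots,x_{k+1}}}^p\,dx=\int_{D_n^{k+1}}\abs{\conv{x_1,\ldots,x_{k+1}}}^p\,dx$. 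As in the proof of Corollary~\ref{cor:B}, this reduces the analysis to bounded Borel sets, and I would invoke Gardner's characterization \cite{Gardner_dual}, resting on Pfiefer \cite{Pfiefer},\cite{Pfiefer90}: among sets of volume one this functional is minimized precisely by Euclidean balls when $k<n$ and precisely by $n$-dimensional ellipsoids when $k=n$. Undoing the scaling $x\mapsto ax$ and the normalization relating $\bar f$ to $f$ yields the stated forms $f=a\mathds{1}_B$ (resp. $f=a\mathds{1}_{\mathcal E}$); translation-invariance of $\Delta_p$ accounts for the fact that $B$ and $\mathcal E$ need not be centered. The converse implication is a direct homogeneity computation.
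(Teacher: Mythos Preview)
Your approach matches the paper's almost exactly: the inequality via Theorem~\ref{thm:A} with $C=\conv{e_1,\ldots,e_{k+1}}$ and $m=k$, the scaling reduction to $\bar f$, the reduction of the equality case to $\Delta_p(\bar f^*,\ldots,\bar f^*)=\Delta_p(\mathds{1}_{D_n},\ldots,\mathds{1}_{D_n})$, spherical averaging in the last variable to obtain monotonicity, the bathtub lemma, and the final appeal to Gardner/Pfiefer for bounded Borel sets. Your inline argument that $r\mapsto\int_{S^{n-1}}\abs{\conv{y_1,\ldots,y_k,r\omega}}^p\,d\sigma(\omega)$ is nondecreasing is precisely the content of Lemma~\ref{lemma:avg_inc}, which the paper states separately and then invokes.

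The one place where your write-up is looser than the paper's is the strictness step forcing $\bar f^*=\mathds{1}_{D_n}$. Your argument (``strictly increasing except possibly on an initial interval $[0,\delta]$'', then ``absorb $[0,\delta]$ via the mass normalization'', then ``configurations with $\delta$ arbitrarily small'') is correct in spirit but imprecise as written: Lemma~\ref{lemma:bathtub} as stated needs $\phi$ strictly increasing on all of $\R^+$, and your sketch does not make clear exactly which refinement you are using or why a positive-measure set of $(y_1,\ldots,y_k)$ yields strict inequality. The paper sidesteps this entirely with a cleaner device: since $\bar f^*$ is radial with support a ball about the origin, one can choose affinely independent $x_1,\ldots,x_k$ in the support with $0\in\conv{x_1,\ldots,x_k}$. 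Then the affine span of the $x_i$ passes through $0$, so $\abs{\conv{x_1,\ldots,x_k,r\theta}}=(1/k)\abs{\conv{x_1,\ldots,x_k}}\cdot r\abs{P_{E}\theta}$ with $E=\mathop{\rm span}\{x_1,\ldots,x_k\}^\perp$, which is \emph{strictly} increasing in $r$ for a.e.\ $\theta$. Now Lemma~\ref{lemma:bathtub} applies directly to give a strict inequality at these $x_i$, and continuity plus one more application of Theorem~\ref{thm:PaoPivAdv} produces the contradiction. Swapping your $\delta$-argument for this choice of $x_i$'s would make your proof complete and identical to the paper's.
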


The proof is parallel to that of Corollary \ref{cor:B}, although the
equality conditions in this case require the following additional
lemma (see \cite[Lemmas 3.7, 3.8]{PaoPiv_probtake}).
\begin{lemma}
  \label{lemma:avg_inc}
  Let $1\ls k\ls n$ and let $x_1,\ldots,x_{k}\in \R^n$.
  Then for each $p\gr 1$,
  \begin{equation}
    \R^{+}\ni r\mapsto
    \int_{S^{n-1}}\abs{\conv{x_1,\ldots,x_{k},r\theta}}^p d\sigma(\theta)
  \end{equation}is increasing.
\end{lemma}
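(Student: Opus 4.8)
The plan is to reduce the $(k+1)$-point simplex volume to a point‑to‑subspace distance, symmetrize the resulting spherical integral by a reflection, and then finish with a convexity argument.

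First I would set $H := \mathop{\rm aff}\{x_1,\ldots,x_k\}$, with direction space $L := \mathop{\rm span}\{x_2-x_1,\ldots,x_k-x_1\}$, so $\dim L \ls k-1$. If $x_1,\ldots,x_k$ are affinely dependent the integrand vanishes identically (the convex hull of $x_1,\ldots,x_k,r\theta$ then has affine dimension $\ls k-1$), so the statement is trivial; hence assume $\dim L = k-1$. Viewing the $k$-simplex $\conv{x_1,\ldots,x_k,r\theta}$ as a pyramid over the base $\conv{x_1,\ldots,x_k}$ with apex $r\theta$ gives
\begin{equation*}
  \abs{\conv{x_1,\ldots,x_k,r\theta}} = \frac{1}{k}\,\abs{\conv{x_1,\ldots,x_k}}\cdot\mathop{\rm dist}(r\theta,H),
\end{equation*}
where $\mathop{\rm dist}$ denotes Euclidean distance. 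The first factor on the right is a positive constant, so the claim reduces to showing that $r\mapsto\int_{S^{n-1}}\mathop{\rm dist}(r\theta,H)^{p}\,d\sigma(\theta)$ is nondecreasing on $(0,\infty)$.

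Next I would write $\mathop{\rm dist}(r\theta,H)=\abs{rP\theta-u}$, where $P$ is the orthogonal projection onto $L^{\perp}$ and $u:=Px_1\in L^{\perp}$ is fixed. The key step is to symmetrize: the reflection $R$ that is the identity on $L$ and $-\mathrm{Id}$ on $L^{\perp}$ is orthogonal, hence preserves $S^{n-1}$ and $\sigma$, and satisfies $P(R\theta)=-P\theta$; substituting $\theta\mapsto R\theta$ and averaging gives
\begin{equation*}
  \int_{S^{n-1}}\mathop{\rm dist}(r\theta,H)^{p}\,d\sigma(\theta)=\frac{1}{2}\int_{S^{n-1}}\Big(\abs{rP\theta-u}^{p}+\abs{rP\theta+u}^{p}\Big)\,d\sigma(\theta).
\end{equation*}
Since for each fixed $r$ the integrand is continuous and bounded on $S^{n-1}$, it now suffices to prove that for every fixed $w\in L^{\perp}$ the function $r\mapsto\abs{rw-u}^{p}+\abs{rw+u}^{p}$ is nondecreasing on $(0,\infty)$, and then integrate in $\theta$ with $w=P\theta$.

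For that last point I would argue by convexity: $F(y):=\abs{y-u}^{p}+\abs{y+u}^{p}$ is convex on $\R^{n}$ (the Euclidean norm is convex and $t\mapsto t^{p}$ is convex and increasing on $[0,\infty)$ for $p\gr1$) and even. Hence, for fixed $w$, $h(r):=F(rw)$ is convex on $\R$ (a convex function restricted to a line) and even, and a convex even function on $\R$ is automatically nondecreasing on $[0,\infty)$ --- which is exactly the required pointwise monotonicity. The main obstacle, in my view, is spotting that one must symmetrize \emph{before} estimating: differentiating $r\mapsto\int\abs{rP\theta-u}^{p}\,d\sigma(\theta)$ directly yields an integrand, $p\abs{rP\theta-u}^{p-2}\langle rP\theta-u,P\theta\rangle$, with no evident sign, whereas after the reflection the monotonicity is a one-line consequence of convexity; the hypothesis $p\gr1$ enters only through convexity of $\abs{\cdot}^{p}$. (If one prefers to avoid convexity, the same pointwise monotonicity follows from checking that $t\mapsto t(t^{2}+b)^{p/2-1}$ is odd and, for $p\gr1$, nondecreasing on $[0,\infty)$, after splitting $u$ into its components parallel and orthogonal to $w$.)
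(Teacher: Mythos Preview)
Your argument is correct. The pyramid formula reduces the simplex volume to a distance, the reflection $R$ is a legitimate change of variables on the sphere since it is orthogonal, and the convex-even trick cleanly disposes of the monotonicity for $p\gr 1$. Each step is sound.

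As for comparison: the paper does not actually prove this lemma. It simply cites \cite[Lemmas 3.7, 3.8]{PaoPiv_probtake} and moves on, so there is no in-paper argument to set yours against. What can be said is that your proof dovetails nicely with the remark immediately following the lemma: after your symmetrization the integrand becomes exactly $\tfrac{1}{2}\bigl(\abs{rw-u}^{p}+\abs{rw+u}^{p}\bigr)$, and the paper's remark that this fails to be monotone when $k=n=1$ and $p<1$ pinpoints precisely where your convexity step would break down. So your use of $p\gr 1$ is sharp and matches the paper's stated necessity of that hypothesis.
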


Here and throughout, ``increasing'' is used in the non-strict sense.

\begin{remark}If $k=n=1$, the condition $p\gr 1$ in the latter lemma is needed.
Indeed, in this case $S^{0}=\{-1,1\}$ and the function $\R^{+}\ni
r\mapsto \frac{1}{2}(\abs{r-x_1}^p + \abs{r+x_1}^p)$ is not monotone
if $p<1$, $p\not=0$ and $x_1\not =0$.
\end{remark}

\begin{proof}[Proof of Corollary \ref{cor:A}]
  Set $C = \conv{e_1,\ldots,e_{k+1}}\subset \R^{k+1}$ so that
  $\dim{C}=k$.  Observe that
  \begin{equation*}
    \Delta_p(f,\ldots,f) = \norm{f}_1^{k+1}
    \mathcal{F}_{C,p}(f,\ldots,f)^p,
  \end{equation*}
  where the arguments in $\mathcal{F}_{C,p}(\cdot,\ldots,\cdot)$ are
  repeated $k+1$ times.  The inequality follows from Theorem
  \ref{thm:A} (with $k+1$ in place of $k$ and $m=k$).
  
  Assume now that equality holds in (\ref{eqn:cor:A}). It follows that
  \begin{equation}
    \Delta_{p}(\bar{f},\ldots,\bar{f}) =
    \Delta_{p}(\mathds{1}_{D_n},\ldots,\mathds{1}_{D_n}),
  \end{equation}where $\bar{f}$ is defined in (\ref{eqn:barf}).
  In turn, we must have equality in both inequalities in Theorem
  \ref{thm:PaoPivAdv}. In particular,
  \begin{equation}
    \label{eqn:Deltaequality}
    \Delta_{p}(\bar{f}^*,\ldots,\bar{f}^*) =
    \Delta_{p}(\mathds{1}_{D_n},\ldots,\mathds{1}_{D_n}) \, ,
  \end{equation}  where $ \bar{f}^*:= (\bar{f})^*$.
  As above, we claim that $\bar{f}^* = \mathds{1}_{D_n}$ a.e.  For a
  contradiction, we assume that
  $$\abs{\{x\in \R^n:\bar{f}^*(x)\not 
    = \mathds{1}_{D_n}(x)\}} >0.$$ 
  
  Let $$F(x_1,\ldots,x_{k+1}) = \abs{\conv{x_1,\ldots, x_{k+1}}}^p.$$
  By Lemma \ref{lemma:avg_inc}, for any $x_1,\ldots, x_{k}\in
  \R^n$, the function
  \begin{equation*}
    \R^{+}\ni r\mapsto \int_{S^{n-1}}F(x_1,\ldots,x_{k}, r \theta)
    d\sigma(\theta)
  \end{equation*} is increasing.  
  By Lemma \ref{lemma:bathtub}, we have
  \begin{eqnarray*}
    \lefteqn{\int_{0}^{\infty} \int_{S^{n-1}}F(x_1,\ldots,x_{k},r\theta)
    \bar{f}^*(r\theta) r^{n-1}d\sigma(\theta)dr}\\
    & & \gr
    \int_{0}^{r_n}\int_{S^{n-1}}F(x_1,\ldots,x_{k},r\theta)
    r^{n-1}d\sigma(\theta)dr. 
  \end{eqnarray*}  
  i.e.
  \begin{equation}
    \label{eqn:1d_bath}
    \int_{\R^n} F(x_1,\ldots,x_{k},x_{k+1}) \bar{f}^*(x_{k+1})
    dx_{k+1} \gr \int_{D_n} F(x_1,\ldots,x_{k},x_{k+1}) dx_{k+1}.
  \end{equation}
  Assume now that $x_1,\ldots,x_k$ are affinely independent points
  inside the support of $\bar{f}^*$, and $0\in \conv{x_1,\ldots,x_k}$.
  Then \begin{equation*} F(x_1,\ldots,x_k,r\theta)^{1/p}=
    (1/k)\abs{\conv{x_1,\ldots,x_k}} \abs{P_E (r\theta)},
    \end{equation*}where  $E =\mathop{\rm span}\{x_1,\ldots,x_k\}^{\perp}$.  In particular, for
  such fixed $x_1,\ldots,x_k$, $F$ is a strictly increasing function
  of $r$. Consequenlty, for such $x_i$ the inequality in
  (\ref{eqn:1d_bath}) is strict by Lemma \ref{lemma:bathtub}.  By
  continuity of $F$, and another application of Theorem \ref{thm:A},
  we get
  \begin{equation*}
    \Delta_p(\bar{f}^*,\ldots,\bar{f}^*, \bar{f}^*) >
    \Delta_p(\bar{f}^*,\ldots,\bar{f}^*, \mathds{1}_{D_n})
    \gr \Delta_p(\mathds{1}_{D_n}, \ldots,\mathds{1}_{D_n}),
  \end{equation*}
  which contradicts (\ref{eqn:Deltaequality}).
  
  Thus $\bar{f}^*=\mathds{1}_{D_n}$ a.e., hence
  $\bar{f}=\mathds{1}_K$, for some measurable set $K$ of volume
  one. Arguing as in the proof of Corollary \ref{cor:B}, we reduce the
  equality case in (\ref{eqn:cor:A}) to that of bounded Borel
  measurable sets and we appeal again to the work of Gardner
  \cite[Corollary 4.2]{Gardner_dual} and Pfiefer \cite{Pfiefer},
  \cite{Pfiefer90}.
\end{proof}


\section{Integral inequalities on $G_{n,k}$ and $M_{n,k}$}
\label{section:main_results}

In this section we prove Theorems \ref{thm:Busemann} and
\ref{thm:Schneider}. We start with a generalization of the former.

\begin{theorem}
  \label{thm:Grinberg_general}
Let $1\ls q\ls k\ls n-1$ and let $f_1,\ldots,f_{q}$ be non-negative,
bounded integrable functions on $\R^n$.  Then for $0\ls p\ls n-k$,
\begin{equation}
  \label{eqn:Grinberg_general}
  \int_{G_{n,k}} \prod_{i=1}^{q}
  \frac{\norm{f_i\vert_{E}}_1^{1+p/k}}{\norm{f_i\vert_{E}}_{\infty}^{p/k}}
  dE \ls \frac{\omega_{k}^{q(k+p)/k}}{\omega_n^{q(k+p)/n}}
  \prod_{i=1}^q \norm{f_i}_1^{(k+p)/n}\norm{f_i}_{\infty}^{(n-k-p)/n}.
\end{equation}
In particular, when $q=k$ and $p=n-k$, we have
\begin{equation}
  \label{eqn:Grinberg_general_2}
  \int_{G_{n,k}} \prod_{i=1}^k
  \frac{\norm{f_i\vert_E}_1^{n/k}}{\norm{f_i\vert_E}_{\infty}^{(n-k)/k}}dE
    \ls \frac{\omega_k^n}{\omega_n^k}\prod_{i=1}^k \norm{f_i}_1.
\end{equation}
Assume additionally that $\{f_i=\norm{f_i}_{\infty}\}$ is bounded and
$f_i$ is continuous at $0$ for each $i=1,\ldots,k$ and $p>0$. Then
equality holds in (\ref{eqn:Grinberg_general}) for $k=1$ if and only
if $f_1=a\mathds{1}_K$ a.e., where $a>0$ and $K\subset \R^n$ is
star-shaped about the origin; for $1\ls q<k$ or $0<p<n-k$ if and only
if there are positive constants $a_i$, $b_i$, such that
$f_i=a_i\mathds{1}_{b_i B_2^n}$ a.e.; equality holds in
(\ref{eqn:Grinberg_general_2}) if and only if there is an
origin-symmetric ellipsoid $\mathcal{E}\subset \R^n$ and positive
constants $a_i$, $b_i$, such that $f_i=a_i\mathds{1}_{b_i\mathcal{E}}$
a.e. for $i=1,\ldots,k$.
\end{theorem}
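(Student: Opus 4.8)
The plan is to establish the two endpoint cases $p=0$ and $p=n-k$ of (\ref{eqn:Grinberg_general}) by combining the Blaschke--Petkantschin formula with Corollary \ref{cor:B}, and then to obtain every intermediate $p$ by a H\"older interpolation on $G_{n,k}$. \emph{The case $p=n-k$.} Apply Theorem \ref{thm:BPGnk} with $G(x_1,\dots,x_q)=\prod_{i=1}^q f_i(x_i)$; this writes $\prod_i\|f_i\|_1$ as $c_{n,k,q}$ times the $\mu_{n,k}$-average over $E\in G_{n,k}$ of $\int_{E^q}\prod_i f_i(x_i)\,\abs{\conv{0,x_1,\dots,x_q}}^{n-k}\,dx_1\cdots dx_q$. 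For fixed $E\cong\R^k$ the inner integral is $\Delta^0_{n-k}(f_1|_E,\dots,f_q|_E)$ formed inside $\R^k$, so Corollary \ref{cor:B} (with $k$ in place of $n$, $q$ functions, and exponent $n-k>0$) bounds it below by a fixed multiple of $\prod_i\|f_i|_E\|_1^{n/k}\|f_i|_E\|_\infty^{-(n-k)/k}$. Integrating over $G_{n,k}$ and rearranging gives (\ref{eqn:Grinberg_general}) for $p=n-k$ up to a constant, and that constant is pinned to $\omega_k^{qn/k}/\omega_n^{q}$ by applying the same formula to $\prod_i\mathds{1}_{B_2^n}$ (using $\mathds{1}_{B_2^n}|_E=\mathds{1}_{B_2^k}$). \emph{The case $p=0$.} Here $\int_{G_{n,k}}\prod_i\|f_i|_E\|_1\,dE=\int_{G_{n,k}}\int_{E^q}\prod_i f_i(x_i)\,dx_1\cdots dx_q\,dE$, and feeding $G(x)=\prod_i f_i(x_i)\,\abs{\conv{0,x_1,\dots,x_q}}^{-(n-k)}$ into Theorem \ref{thm:BPGnk} rewrites this as $c_{n,k,q}^{-1}\,\Delta^0_{-(n-k)}(f_1,\dots,f_q)$; the exponent $-(n-k)$ satisfies $-(n-k)>-(n-q+1)$ precisely because $q\le k$, so all integrals converge. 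Corollary \ref{cor:B} with this \emph{negative} exponent now gives the $p=0$ case, the constant again being fixed by the $\mathds{1}_{B_2^n}$ test.

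For $0<p<n-k$ set $t=p/(n-k)\in(0,1)$. Comparing exponents shows that the integrand on the left of (\ref{eqn:Grinberg_general}), namely $\prod_i\|f_i|_E\|_1^{1+p/k}\|f_i|_E\|_\infty^{-p/k}$, is the weighted geometric mean $\big(\prod_i\|f_i|_E\|_1\big)^{1-t}\big(\prod_i\|f_i|_E\|_1^{n/k}\|f_i|_E\|_\infty^{-(n-k)/k}\big)^{t}$. H\"older's inequality on the probability space $(G_{n,k},\mu_{n,k})$ with exponents $1/(1-t)$ and $1/t$ then bounds the left side of (\ref{eqn:Grinberg_general}) by the $(1-t)$-th power of the $p=0$ bound times the $t$-th power of the $p=n-k$ bound; a direct computation of the resulting exponents on $\omega_k$, $\omega_n$, $\|f_i\|_1$ and $\|f_i\|_\infty$ shows this product equals exactly the right side of (\ref{eqn:Grinberg_general}).

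For the equality statements assume in addition that $\{f_i=\|f_i\|_\infty\}$ is bounded, $f_i$ is continuous at $0$, and $p>0$, and suppose equality holds in (\ref{eqn:Grinberg_general}). When $0<p<n-k$, positivity of $1-t$ and $t$ forces equality in the H\"older step and hence in the $p=0$ bound, i.e.\ equality in Corollary \ref{cor:B} with a negative exponent and $q<n$ functions, which gives $f_i=a_i\mathds{1}_{b_iB_2^n}$ a.e. When $q<k$ and $p=n-k$ (so $t=1$), equality forces equality in Corollary \ref{cor:B} inside $\R^k$ for $\mu_{n,k}$-a.e.\ $E$; since $q<k$ this means that for a.e.\ $E$ each $f_i|_E$ is a.e.\ a constant multiple of the indicator of a Euclidean ball in $E$ centered at $0$, and --- using continuity at $0$ to see the multiple is independent of $E$, and boundedness of $\{f_i=\|f_i\|_\infty\}$ to see the balls are genuine --- one recovers $f_i=a_i\mathds{1}_{b_iB_2^n}$ a.e. When $q=k$ and $p=n-k$, i.e.\ in (\ref{eqn:Grinberg_general_2}), the same step gives that for a.e.\ $E$ the functions $f_1|_E,\dots,f_k|_E$ are multiples of the indicator of one common origin-symmetric ellipsoid in $E$; reconstructing a body all of whose central $k$-sections are ellipsoids then yields $f_i=a_i\mathds{1}_{b_i\mathcal{E}}$ for a common origin-symmetric ellipsoid $\mathcal{E}$. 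For $k=1$ the inequality is exactly its $p=n-k$ endpoint, and equality forces equality in the one-dimensional case of Corollary \ref{cor:B} for a.e.\ line; combined with continuity at $0$ this produces $f_1=a\mathds{1}_K$ with $K$ star-shaped about the origin. The converse --- that these functions attain equality --- is a direct computation for $B_2^n$, and for (\ref{eqn:Grinberg_general_2}) follows additionally from the $\SL(n)$-invariance of both sides (Theorem \ref{th_aigr} and its remark), which upgrades balls to arbitrary origin-symmetric ellipsoids.

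The main obstacle is this last reconstruction step: passing from ``$\mu_{n,k}$-almost every central $k$-section of each super-level set $\{f_i>\lambda\}$ is a ball (respectively, a common ellipsoid)'' to ``$f_i$ is a constant multiple of the indicator of a ball (respectively, of a common ellipsoid)''. For this one invokes, level set by level set, the measurable-set equality analysis of Gardner \cite{Gardner_dual} (building on Pfiefer \cite{Pfiefer}, \cite{Pfiefer90}), together with a rigidity statement for sets whose central sections are all ellipsoids; the continuity and boundedness hypotheses are precisely what force the amplitudes to be constant and the reconstructed sets to be genuine bodies rather than defined only up to null sets.
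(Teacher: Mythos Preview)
Your argument is correct, but it follows a different route from the paper's.  The paper does not split into endpoints and interpolate; instead it fixes an arbitrary $0\le p\le n-k$ from the start and uses the single Blaschke--Petkantschin identity
\[
\Delta^0_{-(n-k-p)}(f_1,\dots,f_q)
  = c_{n,k,q}\int_{G_{n,k}}\Delta^0_{p}\big(f_1|_E,\dots,f_q|_E\big)\,dE,
\]
then applies Corollary~\ref{cor:B} twice: once on each $E$ with the positive exponent $p$ (bounding the integrand below by the ratio $\prod_i\|f_i|_E\|_1^{1+p/k}\|f_i|_E\|_\infty^{-p/k}$), and once on $\R^n$ with the negative exponent $-(n-k-p)$ (bounding $\Delta^0_{-(n-k-p)}$ above).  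Your two endpoint computations are exactly the degenerate cases of this chain where one of the two applications of Corollary~\ref{cor:B} becomes an identity; your H\"older step then recovers what the paper gets directly.  The trade-off is that the paper's route is shorter and handles all $p$ uniformly, whereas your interpolation makes visible that both sides of \eqref{eqn:Grinberg_general} are log-linear in $p$, so the full range follows from the extremes.

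For the equality cases the difference matters slightly.  In the paper, when $0<p<n-k$ the second application of Corollary~\ref{cor:B} on $\R^n$ has a genuinely nonzero negative exponent, and its equality condition (with $q<n$ functions) immediately forces $f_i=a_i\mathds{1}_{b_iB_2^n}$ --- no reconstruction from sections is needed.  In your approach you reach the same conclusion, but through an extra step: you must argue that equality in the H\"older inequality together with $\int A\le R_0$ and $\int B\le R_1$ forces both endpoint equalities, which it does since $a^{1-t}b^{t}=R_0^{1-t}R_1^{t}$ with $a\le R_0$, $b\le R_1$ and $t\in(0,1)$ implies $a=R_0$, $b=R_1$.  For $p=n-k$ (and hence for the $k=1$ clause, which concerns \eqref{eqn:Grinberg_general_2}) both proofs coincide: only the pointwise inequality on each $E$ carries information, and one must reconstruct $f_i$ from the fact that its restriction to $\mu_{n,k}$-a.e.\ $E$ is a multiple of the indicator of a ball or ellipsoid, invoking Gardner's results exactly as you do.
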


\begin{proof}[Proof of Theorem \ref{thm:Grinberg_general}]
 We will use the following well-known identity from integral geometry:
 for $p>-(k-q+1)$, we have
  \begin{equation}
    \label{eqn:Delta_neg_Gnk}
    \Delta^0_{-(n-k-p)}(f_1,\ldots,f_q) =
    c_{n,k,q}\int_{G_{n,k}}\Delta_{p}^0(f_1\vert_E,\ldots,f_q\vert_E)dE,
  \end{equation} 
  where $\Delta_p^0$ is defined in (\ref{eqn:cor:B}); this is simply
  Theorem \ref{thm:BPGnk} applied to the function $$F(x_1,\ldots,x_q)
  = \prod_{i=1}^q f_i(x_i) \abs{\mathop{\rm
      conv}\{0,x_1,\ldots,x_q\}}^{-(n-k-p)}.$$
  
  Assume now that $E\in G_{n,k}$ and $\norm{f_i\vert_E}_{\infty} >0$
  for $i=1,\ldots,q$.  Applying Corollary \ref{cor:B} on $E$ (with $k$
  in place of $n$ and $q$ in place of $k$), we have
  \begin{equation}
    \label{eqn:pointwise_Gnk}
    \prod_{i=1}^{q} \frac{\norm{f_i\vert_E}_1^{1+p/k}}
         {\norm{f_i\vert_{E}}_{\infty}^{p/k}} \ls
         \omega_{k}^{q(k+p)/k}\frac{\Delta^0_{p}(f_1\vert_{E},\ldots,f_{q}\vert_{E})}
               {\Delta^0_{p}(\mathds{1}_{B_2^n\cap
                   E},\ldots,\mathds{1}_{B_2^n\cap E})}.
  \end{equation}
  Note that $\Delta^0_{p}(\mathds{1}_{B_2^n\cap
    E},\ldots,\mathds{1}_{B_2^n\cap E})$ is independent of $E$.  Thus,
  integrating over $G_{n,k}$, using (\ref{eqn:Delta_neg_Gnk}) and Corollary
  \ref{cor:B} once more, we have
  \begin{eqnarray}
    \int_{G_{n,k}} \prod_{i=1}^{q} \frac{\norm{f_i\vert_E}_1^{1+p/k}}
        {\norm{f_i\vert_{E}}_{\infty}^{p/k}} dE &\ls &
        \omega_{k}^{q(k+p)/k}
        \frac{\int_{G_{n,k}}\Delta^0_{p}(f_1\vert_{E},\ldots,f_{q}\vert_{E})dE}
             {\int_{G_{n,k}}\Delta^0_{p}(\mathds{1}_{B_2^n\cap
                 E},\ldots,\mathds{1}_{B_2^n\cap E})dE}\label{eqn:Gnk_a}\\ & = &
             \omega_{k}^{q(k+p)/k}
             \frac{\Delta_{-(n-k-p)}^0(f_1,\ldots,f_q)}
                  {\Delta_{-(n-k-p)}^0(\mathds{1}_{B_2^n},
               \ldots, \mathds{1}_{B_2^n})} \\ & \ls &
             \frac{\omega_k^{q(k+p)/k}}{\omega_n^{q(k+p)/n}}
             \prod_{i=1}^q
             \norm{f_i}_1^{(k+p)/n}\norm{f_i}_{\infty}^{(n-k-p)/n}.
             \label{eqn:Gnk_b}
  \end{eqnarray}
The equality cases follow from those of Corollary \ref{cor:B} and
Gardner's characterizations of sets that are ellipsoids, Euclidean
balls or star-shaped, up to sets of measure zero, in \cite[Section
  6]{Gardner_dual}.
\end{proof}

\begin{theorem}
  \label{thm:Schneider_general}
  Let $1\ls k \ls n-1$ and $f$ be a non-negative bounded integrable
  function on $\R^n$. Then
  \begin{equation}
    \label{eqn:Schneider_general}
    \int_{M_{n,k}} \frac{\left(\int_{F}f(x)dx\right)^{n+1}}
        {\norm{f\vert_{F}}_{\infty}^{n-k}}dF \ls
        \frac{\omega_k^{n+1}\omega_{n(k+1)}}{\omega_n^{k+1}\omega_{k(n+1)}}
        \left(\int_{\R^n}f(x)dx\right)^{k+1}.
  \end{equation}
  Assume additionally that $\{f=\norm{f}_{\infty}\}$ is bounded. Then
  equality holds in (\ref{eqn:Schneider_general}) when $k=1$ if and
  only if $f=\mathds{1}_K$, where $K\subset \R^n$ is a convex body;
  when $k>1$ if and only if there a positive constant $a$ and an
  ellipsoid $\mathcal{E}$ such that $f=a\mathds{1}_{\mathcal{E}}$ a.e.
\end{theorem}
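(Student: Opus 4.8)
The plan is to follow the proof of Theorem~\ref{thm:Grinberg_general}, replacing $G_{n,k}$ by the affine Grassmannian $M_{n,k}$, Corollary~\ref{cor:B} by Corollary~\ref{cor:A}, and the Blaschke--Petkantschin formula of Theorem~\ref{thm:BPGnk} by that of Theorem~\ref{thm:BPMnk}. Since (\ref{eqn:Schneider_general}) is an endpoint statement, a single inequality will suffice (rather than two). First I would fix $F\in M_{n,k}$ with $\int_F f\,dx>0$; for $\nu_{n,k}$-a.e.\ such $F$ the restriction $f\vert_F$, viewed through an isometry $F\cong\R^k$, is a non-negative bounded integrable function on $\R^k$ with positive $L^1$-norm. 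Applying Corollary~\ref{cor:A} on $F$ with $k+1$ points, ambient dimension $k$, and exponent $p=n-k\gr 1$ — its ``$k=n$'' instance, since the simplices $\conv{x_1,\ldots,x_{k+1}}$ with $x_i\in F$ are $k$-dimensional — and rearranging, one gets
\[
 \frac{\left(\int_F f\,dx\right)^{n+1}}{\norm{f\vert_F}_{\infty}^{\,n-k}}\;\ls\;\frac{\omega_k^{n+1}}{\Delta_{n-k}(\mathds{1}_{B_2^k},\ldots,\mathds{1}_{B_2^k})}\,\Delta_{n-k}(f\vert_F,\ldots,f\vert_F),
\]
where $\Delta_{n-k}$ is as in Corollary~\ref{cor:A} computed intrinsically in $F$, $B_2^k$ is the unit ball of $\R^k$, the denominator on the right is a positive constant depending only on $n$ and $k$ (by rigid-motion invariance of $\Delta_{n-k}$), and both sides vanish when $\int_F f\,dx=0$.

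Next I would integrate this pointwise bound over $M_{n,k}$ and evaluate the right-hand side with Theorem~\ref{thm:BPMnk} for $q=k$, applied to $G(x_1,\ldots,x_{k+1})=\prod_{i=1}^{k+1}f(x_i)$: the weight $\abs{\conv{x_1,\ldots,x_{k+1}}}^{\,n-k}$ that formula produces is precisely the one in $\Delta_{n-k}$, so
\[
 \int_{M_{n,k}}\Delta_{n-k}(f\vert_F,\ldots,f\vert_F)\,dF=\frac{1}{c_{n,k,k}}\left(\int_{\R^n}f\,dx\right)^{k+1}.
\]
This yields (\ref{eqn:Schneider_general}) with constant $\omega_k^{n+1}/(c_{n,k,k}\,\Delta_{n-k}(\mathds{1}_{B_2^k},\ldots,\mathds{1}_{B_2^k}))$. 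To identify it with $\omega_k^{n+1}\omega_{n(k+1)}/(\omega_n^{k+1}\omega_{k(n+1)})$, I would test $f=\mathds{1}_{B_2^n}$: every inequality above is then an equality (Corollary~\ref{cor:A} is sharp for balls), the left-hand side reduces to $\int_{M_{n,k}}\abs{B_2^n\cap F}^{n+1}\,dF$, and the functional inequality thus specializes, for $f=\mathds{1}_K$, to Schneider's sharp inequality (\ref{eqn:Schneider}), which pins down the constant. As a consistency check, the left side of (\ref{eqn:Schneider_general}) is invariant under volume-preserving affine transformations by Theorem~\ref{th_aiagr} (with $q_i\to\infty$), and so is the right side.

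The main work, as in Theorem~\ref{thm:Grinberg_general}, is the equality analysis. Equality in (\ref{eqn:Schneider_general}) forces equality in the pointwise bound for $\nu_{n,k}$-a.e.\ $F$, hence in the corresponding instance of Corollary~\ref{cor:A}; its equality conditions then say that $f\vert_F$ agrees a.e.\ with a positive multiple of the indicator of an ellipsoid of $F$ (an interval when $k=1$), the hypothesis that $\{f=\norm{f}_{\infty}\}$ be bounded serving — exactly as in the proofs of Corollaries~\ref{cor:B} and~\ref{cor:A} — to reduce from functions to bounded Borel sets. One is then left with the purely set-theoretic task of recognizing a bounded Borel set almost all of whose $k$-flat sections are ellipsoids (resp.\ convex sets), which is supplied by Gardner's characterizations in \cite[Section 6]{Gardner_dual} (built on \cite{Pfiefer}, \cite{Pfiefer90}): this produces an $n$-dimensional ellipsoid when $k>1$, via the equality case of Schneider's inequality for sets, and a convex body when $k=1$, via the Crofton formula — precisely the dichotomy in the statement. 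I expect the genuinely delicate points to be these last two: transferring the boundedness of $\{f=\norm{f}_{\infty}\}$ to the sectionwise level sets on $\nu_{n,k}$-a.e.\ $F$, and gluing the sectionwise ellipsoids into a single body; both are handled along the lines of \cite{Gardner_dual}.
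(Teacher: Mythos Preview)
Your proposal is correct and follows essentially the same approach as the paper: apply Corollary~\ref{cor:A} on each flat $F$ with $p=n-k$, integrate over $M_{n,k}$ using the Blaschke--Petkantschin formula of Theorem~\ref{thm:BPMnk}, identify the constant by testing $f=\mathds{1}_{B_2^n}$, and deduce the equality cases from those of Corollary~\ref{cor:A} together with Gardner's characterizations. Your treatment of the equality analysis is in fact more detailed than the paper's one-line appeal to \cite[Corollary~6.8]{Gardner_dual}, but the logic is the same.
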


When $f=\mathds{1}_K$, where $K$ is a convex body in $\R^n$, a more
general result is due to Schneider \cite{Schneider_flats}: for $s\in
\{1,\ldots,n\}$,
\begin{equation}
  \label{eqn:Schneider_p}
  \int_{M_{n,k}}\abs{K\cap F}^{s+1}dF \ls
  \frac{\omega_k^{s+1}}{\omega_n^{(n+ks)/n}}\frac{\omega_{n+ks}}{\omega_{k+ks}}\abs{K}^{1+ks/n}.
\end{equation} 
It is natural to try to extend Theorem \ref{thm:Schneider_general} to
powers $1\ls s\ls n$ as in (\ref{eqn:Schneider_p}). Following the line
of proof of Theorem \ref{thm:Grinberg_general} would require a
statement such as Corollary \ref{cor:A} for $p<1$ (in particular
$p<0$); it is unclear to us if this is possible.

\begin{proof}[Proof of Theorem \ref{thm:Schneider_general}]
  Let $F\in M_{n,k}$ and assume $\norm{f\lvert_F}_{\infty}>0$.
  Applying Corollary \ref{cor:A} on $F$ with $p=n-k$ (replacing $n$ by
  $k$) we have
\begin{equation}
  \label{eqn:Delta}
  \frac{\left(\int_{F}f(x)dx\right)^{n+1}}{\norm{f\vert_{F}}_{\infty}^{n-k}}
  \ls \omega_{k}^{n+1}\frac{ \Delta_{n-k}(f|_F,\ldots,f|_F) }
      {\Delta_{n-k}(\mathds{1}_{B_2^{k}},\ldots,\mathds{1}_{B_2^{k}})},
\end{equation} where the arguments in $\Delta_{n-k}$ are repeated $k+1$ times.
Integrating over $M_{n,k}$ and applying Theorem \ref{thm:BPMnk}, we
get
\begin{eqnarray}
  \int_{M_{n,k}}\frac{\left(\int_{F}f(x)dx\right)^{n+1}}{\norm{f\vert_{F}}_{\infty}^{n-k}}dF
  & \ls & \frac{\omega_{k}^{n+1}\int_{M_{n,k}}
    \Delta_{n-k}(f_1|_F,\ldots,f_{k+1}|_F) dF}
      {\Delta_{n-k}(\mathds{1}_{B_2^{k}},\ldots,\mathds{1}_{B_2^{k}})}\label{eqn:Delta1}
      \\ & = &
      \frac{\omega_{k}^{n+1}}{c_{n,k,k}\Delta_{n-k}(\mathds{1}_{B_2^{k}},\ldots,\mathds{1}_{B_2^{k}})}
      \left(\int_{\R^n}f(x)dx\right)^{k+1}.\nonumber
\end{eqnarray} 
If $f=\mathds{1}_{B_2^n}$, then inequality (\ref{eqn:Delta}) is an
equality (as noted in \cite{Schneider_flats}), hence so is
(\ref{eqn:Delta1}).  Consequently, using the expression in the
equality case in (\ref{eqn:Schneider}) and rearranging terms, we get
\begin{equation*}
\Delta_{n-k}(\mathds{1}_{B_2^k},\ldots,\mathds{1}_{B_2^k}) = 
c_{n,k,k} \omega_n^{k+1} \frac{\omega_{k(n+1)}}{\omega_{n(k+1)}},
\end{equation*} The latter also follows from results of Kingman \cite{Kingman} and Miles
\cite{Miles}.  This proves the inequality.  

The equality cases follow from those of Corollary \ref{cor:A} and
\cite[Corollary 6.8]{Gardner_dual}.
\end{proof}


\section{Bounds for marginals}
\label{section:marginals}

In this section we state and prove a generalization of Theorem
\ref{thm:marginal_intro}. We also discuss marginals of log-concave
measures and connections to the Hyperplane Conjecture.

\begin{theorem} 
  \label{thm:marginal_body}
  Suppose $\mu$ is a probability measure on $\R^n$ with a bounded
  density $f$. Then for each $1\ls k \ls n-1$ and $s>1$, there exists
  ${\cal{A}}_{s}\subseteq G_{n,k}$ with $\mu_{n,k}( {\cal{A}}_{s}) \gr
  1- 2s^{-kn}$ such that:
  \begin{itemize}
  \item[(i)] for every $E\in {\cal{A}}_{s}$ and $t>1$, there exists a
    set ${\cal{B}}_{t} \subseteq E$ such that $\pi_{E}(\mu)( {\cal{B}}_{t})
    \ls t^{-kn}$ and
    \begin{equation}
      \label{eqn:marginal_body_1}
      f_{\pi_E(\mu)}(x)^{1/k}\ls c_{1}st\norm{f}_{\infty}^{1/n}, \quad ( x\in
      (E\setminus {\cal{B}}_{t}) \cup\{0\});
    \end{equation} 
  \item[(ii)] for every $E\in {\cal{A}}_{s}$, $\varepsilon>0$ and any $z\in E$, 
    \begin{equation}
      \label{eqn:marginal_body_2}
      \pi_{E}(\mu) \left(\{x\in E : \abs{x-z} \ls \varepsilon \sqrt{k}
        \}\right) \ls (c_{2}s\varepsilon \norm{f}_{\infty}^{1/n})^{kn/(n+1)}.
    \end{equation}
  \end{itemize}
\end{theorem}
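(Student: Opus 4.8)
The plan is to deduce Theorem~\ref{thm:marginal_body} from the integral inequality of Theorem~\ref{thm:Grinberg_general} (in the form \eqref{eqn:Grinberg_general_2}, or rather its one-function specialization which is Theorem~\ref{thm:Busemann}) by a Markov-type argument on the Grassmannian. First I would fix $1\ls k\ls n-1$ and apply Theorem~\ref{thm:Busemann} to the density $f$: since $\mu$ is a probability measure, $\int_{\R^n}f=1$, so
\begin{equation*}
\int_{G_{n,k}}\frac{\norm{f\vert_E}_1^{n}}{\norm{f\vert_E}_\infty^{n-k}}\,dE\ls\frac{\omega_k^n}{\omega_n^k}.
\end{equation*}
Here $\norm{f\vert_E}_1=\int_E f$, which is \emph{not} the same as $f_{\pi_E(\mu)}$ evaluated at a point; the link is that $f_{\pi_E(\mu)}(0)=\int_{E^\perp}f(y)\,dy$, whereas $\norm{f\vert_E}_1=\int_E f(x)\,dx$ involves the restriction of $f$ to $E$ itself. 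So the genuinely useful quantity for marginals is obtained by \emph{replacing} $E$ by $E^\perp\in G_{n,n-k}$ and running Theorem~\ref{thm:Busemann} there; equivalently, one integrates over $G_{n,n-k}$ and rewrites in terms of $f_{\pi_E(\mu)}$. Concretely, $f_{\pi_E(\mu)}(x)=\int_{E^\perp+x}f$, and its $L^1$-norm over $E$ is $1$, while its sup-norm is at most $\norm{f}_\infty$; moreover $\norm{f_{\pi_E(\mu)}}_\infty\le\norm{f\vert_{?}}\cdots$ — the cleanest route is to observe that for almost every $E$, $f_{\pi_E(\mu)}(x)=\int_{x+E^\perp}f$ is finite, and to bound its pointwise values by the affine inequality Theorem~\ref{thm:Schneider_general} applied on the affine Grassmannian $M_{n,n-k}$, whose fibers are exactly the planes $x+E^\perp$.

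Then the two items come out as follows. For part~(i): apply Theorem~\ref{thm:Schneider_general} (with $n-k$ in place of $k$, so the relevant flats are $(n-k)$-planes, i.e. translates of $E^\perp$) to get
\begin{equation*}
\int_{M_{n,n-k}}\frac{\bigl(\int_F f\bigr)^{n+1}}{\norm{f\vert_F}_\infty^{k}}\,dF\ls C_n\Bigl(\int_{\R^n}f\Bigr)^{n-k+1}=C_n,
\end{equation*}
and since $\norm{f\vert_F}_\infty\le\norm{f}_\infty$, this gives $\int_{M_{n,n-k}}\bigl(\int_F f\bigr)^{n+1}dF\ls C_n\norm{f}_\infty^{k}$. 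Disintegrating $\nu_{n,n-k}$ over $G_{n,n-k}\cong G_{n,k}$ and over the translation parameter $x\in E$ shows $\int_{G_{n,k}}\int_E f_{\pi_E(\mu)}(x)^{n+1}\,dx\,dE\ls C_n\norm{f}_\infty^k$. Now a first application of Markov's inequality in $E$ (the Grassmannian variable) with threshold $s^{kn}$ produces ${\cal A}_s$ with $\mu_{n,k}({\cal A}_s)\gr 1-s^{-kn}$ (the factor $2$ absorbing a second, analogous estimate needed to handle the point $x=0$ separately via Theorem~\ref{thm:Busemann} with $p=n-k$, controlling $f_{\pi_E(\mu)}(0)^{1/k}$), on which $\int_E f_{\pi_E(\mu)}(x)^{n+1}dx\ls C_n s^{kn}\norm{f}_\infty^k$; a second application of Markov in $x$ against $\pi_E(\mu)$, i.e. using $f_{\pi_E(\mu)}(x)\,dx=d\pi_E(\mu)(x)$, with threshold $t^{kn\cdot(\ldots)}$ on $E\setminus{\cal B}_t$, then yields the pointwise bound $f_{\pi_E(\mu)}(x)^{1/k}\ls c_1 st\norm{f}_\infty^{1/n}$ off a set ${\cal B}_t$ of $\pi_E(\mu)$-measure $\ls t^{-kn}$, after matching the arithmetic of the exponents $(n+1)$, $k$, $n$ so that the power of $\norm{f}_\infty$ comes out as $1/n$ and the powers of $s,t$ each come out as $1$. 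For part~(ii): for a ball $z+\varepsilon\sqrt k\,B_2^k\subset E$, write $\pi_E(\mu)(z+\varepsilon\sqrt k B_2^k)=\int_{z+\varepsilon\sqrt k B_2^k}f_{\pi_E(\mu)}$ and split the integral over ${\cal B}_t$ and its complement; on the complement use \eqref{eqn:marginal_body_1}, on ${\cal B}_t$ use $\pi_E(\mu)({\cal B}_t)\le t^{-kn}$. This gives, for each $t>1$, a bound of the shape $(c st\norm{f}_\infty^{1/n}\varepsilon)^k\cdot\text{vol}+t^{-kn}$; optimizing over $t>1$ balances the two terms and produces the exponent $kn/(n+1)$ in \eqref{eqn:marginal_body_2}.

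The main obstacle, and where care is needed, is the bookkeeping of exponents and the correct identification of which Grassmannian to integrate over. The quantity controlled by the affine inequality is $\int_F f$ over $F\in M_{n,n-k}$, which after disintegration is $\int_E\bigl(\int_{x+E^\perp}f\bigr)^{n+1}dx$, and one must verify this equals $\int_E f_{\pi_E(\mu)}(x)^{n+1}\,dx$ with $\pi_E$ the orthogonal projection onto the $k$-plane $E$ (so that $E^\perp$ is the $(n-k)$-plane being integrated out) — i.e. the roles of $k$ and $n-k$ are genuinely swapped relative to the naive reading, and the normalization constant $\nu_{n,n-k}$ must be tracked through Theorem~\ref{thm:BPMnk}. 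A secondary subtlety is the special treatment of $x=0$ in \eqref{eqn:marginal_body_1}: the linear (Grassmannian) inequality only controls $f_{\pi_E(\mu)}(0)=\int_{E^\perp}f$, which is why the origin is singled out in the statement and why one needs both Theorem~\ref{thm:Busemann} and Theorem~\ref{thm:Schneider_general}; combining the two exceptional sets explains the factor $2$ in $\mu_{n,k}({\cal A}_s)\gr 1-2s^{-kn}$. Finally, one should check that the double Markov argument can be arranged so that the exponents in both Markov steps match $kn$ exactly, which forces the precise form $(c_1 st\norm{f}_\infty^{1/n})^k$ with independent parameters $s$ (Grassmannian) and $t$ (fiber); this is routine but must be done consistently with Theorem~\ref{thm:marginal_intro}, recovered by taking $s=t=e^{n}$ (so $s^{-kn}=t^{-kn}=e^{-kn}$) and absorbing constants.
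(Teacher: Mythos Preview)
Your proposal is correct and follows essentially the same route as the paper: apply Theorem~\ref{thm:Schneider} on $M_{n,n-k}$, disintegrate as $\int_{G_{n,k}}\int_E f_{\pi_E(\mu)}(x)^n\,d\pi_E(\mu)(x)\,dE$, run Markov twice (once in $E$, once in $x$ against $\pi_E(\mu)$) to get \eqref{eqn:marginal_body_1} off ${\cal B}_t$, handle $x=0$ separately via Theorem~\ref{thm:Busemann} on $G_{n,n-k}$ (hence the factor $2$), and for part~(ii) split the ball over ${\cal B}_t$ and its complement and optimize in $t$. One trivial slip: to recover Theorem~\ref{thm:marginal_intro} you want $s=t=e$, not $s=t=e^n$, since $s^{-kn}=e^{-kn}$ requires $s=e$.
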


\begin{proof}
By Fubini's Theorem and Theorem \ref{thm:Schneider}, 
\begin{eqnarray*}
\left(\int_{G_{n,k}} \int_{E} \frac{\left( \int_{E^{\perp} +x} f(y )
    d y \right)^{n}}{ \norm{f\vert_{E^{\perp}+ x}}_{\infty}^{k}} d
  \pi_{E}(\mu)(x) dE \right)^{\frac{1}{kn}}
    & = &
  \left(\int_{M_{n,n-k}} \frac{ \left( \int_{F} f(y ) d y \right)^{n+1}}{
    \norm{f\vert_{F}}_{\infty}^{k}} dF\right)^{\frac{1}{kn}} \\ & \ls &
  \left(\frac{\omega_{n-k}^{n+1} \omega_{n(n-k+1)}}{\omega_{n}^{n-k+1}
    \omega_{(n-k)(n+1)}}\right)^{\frac{1}{kn}} \\
  & \simeq & 1.
\end{eqnarray*}
By Markov's inequality, for each $s>1$, the $\mu_{n,k}$-measure of the
set ${\cal{A}}_{s}^{(1)}$ of $E\in G_{n,k}$ such that
\begin{equation}
  \label{eqn:Markov_E}
\int_{E} \frac{ \left(\int_{E^{\perp} +x} f(y) dy \right)^{n}}{
  \norm{f\vert_{E^{\perp}+x}}_{\infty}^{k}} d \pi_{E}(\mu)(x) \ls (c
s)^{kn}
\end{equation} 
is at least $1-s^{-kn}$.  For $E\in {\cal{A}}_{s}^{(1)}$ and $t>1$, we
apply Markov's inequality once more to get that the
$\pi_E(\mu)$-measure of the set ${\cal{B}}_{t}$ of $x\in E$ such that
\begin{equation}
  \label{eqn:Markov_x}
\frac{ \left( \int_{E^{\perp} +x} f(y) dy \right)^{n}}{
  \norm{f\vert_{E^{\perp}+ x}}_{\infty}^{k}} \gr (cst)^{kn}
\end{equation}
is less than $t^{-kn}$. Note that for every $x\in E\setminus
{\cal{B}}_{t}$,
\begin{equation*}
\left(\int_{E^{\perp}+x} f(y) dy \right)^{1/k}
\ls  cst \norm{f\vert_{E^{\perp}+x}}_{\infty}^{1/n}  
\ls cst  \norm{f}_{\infty}^{1/n}.
\end{equation*}

Applying now Theorem \ref{thm:Busemann}, we have
\begin{equation*}
  \left( \int_{G_{n,k}} \frac{ f_{\pi_{E}(\mu)}(0)^{n }}{
  \norm{f\vert_{E^{\perp}}}_{\infty}^{k}} dE \right)^{\frac{1}{nk}} = \left(
\int_{G_{n,k}} \frac{ \left(\int_{E^{\perp}} f(y) d y \right)^{n}}{
  \norm{f\vert_{E^{\perp}}}_{\infty}^{k}} dE \right)^{\frac{1}{nk}} \ls
\left( \frac{ \omega_{n-k}^{n}}{ \omega_{n}^{n-k}}
\right)^{\frac{1}{nk}} \simeq 1. 
\end{equation*}
By Markov's inequality, for each $s>1$, the $\mu_{n,k}$-measure of the
set ${\cal{A}}_{s}^{(2)}$ of $E\in G_{n,k}$ such that
\begin{equation*}
f_{\pi_{E}(\mu)}(0)^{\frac{1}{k}} \ls cs
\norm{f\vert_{E^{\perp}}}_{\infty}^{\frac{1}{n}} 
\end{equation*}
is at least $1-s^{-nk}$.  For every $E\in {\cal{A}}_{s}^{(2)}$, we have
that
\begin{equation*}
  f_{\pi_{E}(\mu)}(0)^{\frac{1}{k}} \ls cs
  \norm{f\vert_{E^{\perp}}}_{\infty}^{\frac{1}{n}} \ls cs
  \norm{f}_{\infty}^{\frac{1}{n}}.
\end{equation*}
Setting ${\cal{A}}_{s}:= {\cal{A}}_{s}^{1} \cup {\cal{A}}_{s}^{(2)}$,
we conlude that (\ref{eqn:marginal_body_1}) holds.

Towards (\ref{eqn:marginal_body_2}), let $s,t>1$, $E\in
{\cal{A}}_{s}$, $z\in E$ and $\eps>0$. Then
\begin{eqnarray*}
\lefteqn{\pi_{E}(\mu) \left( \{x\in E : \abs{x-z} \ls \varepsilon \sqrt{k}
\}\right)}\\
& & =  \int_{B(z, \varepsilon \sqrt{k})} f_{\pi_{E}(\mu)}(y) dy\\
& & = \int_{B(z, \varepsilon \sqrt{k})\cap {\cal{B}}_{t}} f_{\pi_{E}(\mu)}(y) d y 
+ \int_{B(z, \varepsilon \sqrt{k}) \cap {\cal{B}}_{t}^{c}} f_{\pi_{E}(\mu)}(y) d y \\
& & = \pi_{E}(\mu) \left( {\cal{B}}_{t}\right) 
+ \sup_{y\in E\setminus {\cal{B}}_{t}} f_{\pi_{E}(\mu)}(y) 
\abs{ B(z, \varepsilon \sqrt{k})} \\
& & \ls  t^{-nk} +( c_{1}st\norm{f}_{\infty}^{1/n})^{k} (c_{0} \varepsilon )^{k} \,. \\
\end{eqnarray*}
We choose $t:= (c_{2} \varepsilon
s\norm{f}_{\infty}^{1/n})^{-\frac{1}{n+1}}$ and we get
(\ref{eqn:marginal_body_2}).
\end{proof}

\begin{remark}
  \label{remark:smallball_exp}
  If ${\cal{A}}_{s}$ is the set in Theorem \ref{thm:marginal_intro},
  $E\in {\cal{A}}_{s}$ and $\varepsilon>0$, then we have the stronger
  small-ball probability
  \begin{equation}
    \pi_{E}(\mu) \left(\{x\in E : \abs{x} \ls \varepsilon \sqrt{k}
    \}\right) \ls (c_{3}s\varepsilon \norm{f}_{\infty}^{1/n})^{k}.  
  \end{equation}
  The proof is analogous to that of (\ref{eqn:marginal_body_2}) (use
  Theorem \ref{thm:Busemann}).
\end{remark}

The next lemma shows that the probability estimate for the
$\mu_{n,k}$-measure in Theorem \ref{thm:marginal_body} is sharp in
each dimension $k$.

\begin{lemma}
  \label{lemma:Gaussian_sharp}
Let $1\ls k \ls n-1$ and set $\sigma=(2\pi)^{-n/(2k)}$. Let $f$ be the
Gaussian density with law $\mu=N(0, D)$, where $D$ is the diagonal
matrix $D= \mathop{\rm diag}(\sigma^2,\ldots,\sigma^2,1,\ldots,1)$,
with $\sigma^2$ repeated $k$ times. Then for each $1\ls s \ls
\sigma^{-1}$,
\begin{equation}
  \label{eqn:Gaussian_sharp}
  \mu_{n,k} \left(\{E\in G_{n,k}:
  \norm{f_{\pi_E(\mu)}}_{\infty}^{1/k} \gr s \}\right) \gr
  (2s)^{-k(n-k)}.
\end{equation}
\end{lemma}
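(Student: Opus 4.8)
The plan is to explicitly compute both sides of \eqref{eqn:Gaussian_sharp} for the given anisotropic Gaussian and then compare. For a Gaussian $\mu = N(0,D)$ with $D = \mathop{\rm diag}(\sigma^2,\ldots,\sigma^2,1,\ldots,1)$, the marginal $\pi_E(\mu)$ is again Gaussian, centered at $0$, with covariance matrix $P_E D P_E^{tr}$ (restricted to $E$), and its maximal density value is $\|f_{\pi_E(\mu)}\|_\infty = (2\pi)^{-k/2} \det(P_E D P_E^{tr}\vert_E)^{-1/2}$, attained at the origin. Thus $\|f_{\pi_E(\mu)}\|_\infty^{1/k} \gr s$ is equivalent to a lower bound on $(2\pi)^{-1/2}\det(P_E D P_E^{tr}\vert_E)^{-1/(2k)}$, i.e., to an \emph{upper} bound on the determinant of the compression of $D$ to $E$. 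With the normalization $\sigma = (2\pi)^{-n/(2k)}$ one checks that the threshold condition becomes, after rearranging, something of the form $\det(P_E D P_E^{tr}\vert_E) \ls \sigma^{2k} s^{-2k} \cdot (2\pi)^{?}$; the constant $\sigma$ was chosen precisely so that $E = \mathop{\rm span}\{e_1,\ldots,e_k\}$ (the "most peaked" coordinate subspace) gives $\|f_{\pi_E(\mu)}\|_\infty^{1/k} = \sigma^{-1}$, the extreme value, which is why the hypothesis restricts to $1 \ls s \ls \sigma^{-1}$.

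Next I would reduce the event to a neighborhood of that coordinate subspace. Write $E_0 = \mathop{\rm span}\{e_1,\ldots,e_k\}$ and parametrize subspaces $E$ near $E_0$ by their "angle" with $E_0$. Concretely, for $E$ spanned by columns of a matrix $\binom{\I_k}{X}$ with $X$ a $(n-k)\times k$ matrix (the standard chart on $G_{n,k}$ around $E_0$, of dimension $k(n-k)$), one computes $\det(P_E D P_E^{tr}\vert_E)$ as an explicit rational function of $X$ that equals $\sigma^{2k}$ at $X = 0$ and grows as $X$ moves away. A first-order (or second-order) expansion shows that for $\|X\|$ below an absolute constant times $\sqrt{1 - \sigma^2 s^2}\,$ (or a comparable quantity), the determinant stays below the required threshold, so the event in \eqref{eqn:Gaussian_sharp} contains a coordinate ball of radius $\gtrsim$ (some explicit function of $s$ and $\sigma$) in this chart. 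Then I would invoke the fact that the Haar measure $\mu_{n,k}$ in this chart has a density bounded below by an absolute constant near $X = 0$, so the measure of this set is $\gtrsim (\text{radius})^{k(n-k)}$, which after bookkeeping yields the claimed $(2s)^{-k(n-k)}$.

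I expect the main obstacle to be the determinant computation and its lower bound on the sublevel set: getting a \emph{clean} inequality showing that $\{E : \det(P_E D P_E^{tr}\vert_E) \le \text{threshold}\}$ contains a ball of radius comparable to a power of $s$ in the Grassmannian chart, with all constants tracked so that the final bound is exactly $(2s)^{-k(n-k)}$ and not merely $c(s/\sigma^{-1})^{k(n-k)}$ for some unspecified $c$. One clean route is to restrict attention to the still-smaller family of subspaces of the form $E = \mathop{\rm span}\{\cos\theta_j\, e_j + \sin\theta_j\, e_{k+j} : j = 1,\ldots,k\}$ (assuming $n - k \gr k$; the case $n-k < k$ is handled by the symmetry $G_{n,k} \cong G_{n,n-k}$ applied to the complementary data), for which $\det(P_E D P_E^{tr}\vert_E) = \prod_{j=1}^k(\sigma^2 \cos^2\theta_j + \sin^2\theta_j)$ is a product of one-dimensional factors; the event then reduces to $\prod_j(\sigma^2\cos^2\theta_j + \sin^2\theta_j) \le \sigma^2 s^2 \cdot (\text{const})$, which is implied by each factor being $\le \sigma^{2/k} s^{2/k}$, i.e. by $|\theta_j|$ below an explicit threshold, and the $\mu_{n,k}$-measure of such a product set is computed directly from the known joint density of the principal angles. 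This sidesteps the full chart computation and makes the constant $2$ transparent.
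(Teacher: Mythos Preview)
Your strategy is correct and matches the paper's at the structural level: show that the event $\{E : \norm{f_{\pi_E(\mu)}}_\infty^{1/k}\gr s\}$ contains a metric neighborhood of the coordinate subspace $E_0=\mathop{\rm span}\{e_1,\ldots,e_k\}$, then lower-bound the Haar measure of that neighborhood by something of order $(\text{radius})^{k(n-k)}$.

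Where you diverge is in the execution of each step. For step~(a), the paper does not expand in a chart; instead it writes, for any $E_1$ with $d(E_0,E_1)\ls\eps$, the covariance of $P_{E_1}Dg$ via the singular value decomposition of $P_{E_1}P_{E_0}$ (i.e.\ via the principal angles $a_i=\cos\theta_i$), and then uses the AM--GM bound $(\det A)^{1/k}\ls \tfrac{1}{k}\Tr A\ls \sigma^2+\eps^2$. This is exactly the one-dimensional factorization you arrive at in your ``clean route'', but obtained for \emph{all} $E_1$ in the $\eps$-ball, not just for the special family $\mathop{\rm span}\{\cos\theta_j e_j+\sin\theta_j e_{k+j}\}$; in particular no case split $n-k\gr k$ versus $n-k<k$ is needed. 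For step~(b), the paper does not compute the Haar density in a chart or the principal-angle density at all: it simply invokes Szarek's ball-volume estimate on $G_{n,k}$ (stated in the paper as a proposition), which gives $\mu_{n,k}(\{E:d(E,E_0)\ls\eps\})\gr(c\eps)^{k(n-k)}$, and then chooses $\eps$ appropriately.

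So your plan would work, but it reproves a known Grassmannian ball-volume bound that the paper treats as a black box. Citing that bound is both shorter and avoids the constant-tracking you anticipate as the main obstacle; conversely, your direct computation would make the argument self-contained and, as you note, could in principle pin down the constant more explicitly than the paper does (the paper's ``$2$'' is not derived with any care---it just absorbs the absolute constant from the cited proposition).
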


The proof relies on the following proposition, which is a direct
consequence of a result of Szarek \cite{Szarek}; this formulation is
from \cite[Corollary 2.2]{PaourisValettas}; here we equip $G_{n,k}$
with the metric $d(E_0, E_1)$ which is the operator norm
$\norm{P_{E_0}-P_{E_1}:\ell_2^n\rightarrow \ell_2^n}$.

\begin{proposition}
  \label{prop:Szarek_cap}
  Let $1\ls k \ls n-1$, $E\in G_{n,k}$ and $\varepsilon\in (0,2)$. Then
  \begin{equation*}
    \mu_{n,k} \left(\{F\in G_{n,k}: d(E,F) \ls \varepsilon\}\right) \gr
    (c\varepsilon)^{k(n-k)},
  \end{equation*}where $c$ is a numeric constant.
\end{proposition}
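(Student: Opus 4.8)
\textbf{Proof plan for Proposition \ref{prop:Szarek_cap}.}
The plan is to deduce this lower bound on the Haar measure of a metric ball in $G_{n,k}$ directly from Szarek's estimate \cite{Szarek} on the metric entropy (covering/packing numbers) of the Grassmannian, following the reference \cite[Corollary 2.2]{PaourisValettas} which is cited for exactly this formulation. First I would recall that $G_{n,k}$, equipped with the metric $d(E_0,E_1) = \norm{P_{E_0}-P_{E_1}:\ell_2^n\to\ell_2^n}$, has diameter at most $1$ (indeed $\norm{P_{E_0}-P_{E_1}}\ls 1$ always, since the difference of two orthogonal projections has norm at most one), so for $\varepsilon\in(0,2)$ the statement is only nontrivial for $\varepsilon<1$ and one may as well assume $\varepsilon\in(0,1)$.

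The key step is the two-sided entropy bound: there are numeric constants $c',C'$ such that the packing number $N(G_{n,k},d,\varepsilon)$ — the maximal number of points that are pairwise $\varepsilon$-separated — satisfies $N(G_{n,k},d,\varepsilon) \ls (C'/\varepsilon)^{k(n-k)}$ for all $\varepsilon\in(0,1)$. This is precisely Szarek's theorem (the exponent $k(n-k)$ is the dimension of the manifold $G_{n,k}$). Now take a maximal $\varepsilon$-separated set $\{E_1,\ldots,E_N\}$ in $G_{n,k}$; by maximality the balls $B(E_j,\varepsilon)$ cover $G_{n,k}$, so since $\mu_{n,k}$ is a probability measure,
\begin{equation*}
1 = \mu_{n,k}(G_{n,k}) \ls \sum_{j=1}^N \mu_{n,k}(B(E_j,\varepsilon)).
\end{equation*}
By the rotation-invariance of the Haar measure $\mu_{n,k}$ and of the metric $d$ (the orthogonal group acts transitively on $G_{n,k}$ and acts by isometries for $d$), all the balls $B(E_j,\varepsilon)$ have the same measure, equal to $\mu_{n,k}(B(E,\varepsilon))$ for the given $E$. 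Hence $\mu_{n,k}(B(E,\varepsilon)) \gr 1/N \gr (\varepsilon/C')^{k(n-k)} = (c\varepsilon)^{k(n-k)}$ with $c = 1/C'$, which is the claimed inequality. For $\varepsilon\in[1,2)$ one absorbs the change into the constant $c$ (shrinking $\varepsilon$ to, say, $1/2$ in the argument only decreases the left side by a bounded-by-constant factor).

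I expect the only real subtlety to be invoking Szarek's bound in exactly the right normalization — specifically matching the metric $d$ used here (operator-norm distance of projections) with whatever metric Szarek uses (he may phrase it in terms of the geodesic metric, or the Frobenius/Hilbert--Schmidt distance $\norm{P_{E_0}-P_{E_1}}_{HS}$, or principal angles); all of these are equivalent up to constants depending only on $n$ in a way that is harmless, or more carefully, up to absolute constants after accounting for the dimensions of the subspaces. This is handled in \cite{PaourisValettas}, so the cleanest route is simply to cite \cite[Corollary 2.2]{PaourisValettas} for the final inequality and note that it is an immediate consequence of Szarek's entropy estimate via the volumetric (covering-by-balls) argument sketched above. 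No delicate geometry of the Grassmannian beyond homogeneity and the entropy bound is needed.
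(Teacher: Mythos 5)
Your argument is correct, and in fact it is the standard volumetric argument underlying the result: the paper itself offers no proof of Proposition~\ref{prop:Szarek_cap}, simply citing Szarek~\cite{Szarek} via \cite[Corollary 2.2]{PaourisValettas}, so your reconstruction (maximal $\varepsilon$-separated set $\Rightarrow$ covering by balls, equality of ball measures by rotation invariance, then invert Szarek's entropy bound $N \ls (C'/\varepsilon)^{k(n-k)}$) is exactly the argument those references rely on. Your caveat about matching Szarek's metric normalization to the operator-norm distance of projections is the right thing to flag, and handling $\varepsilon \in [1,2)$ by absorbing it into the constant is fine.
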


\begin{proof}[Proof of Lemma \ref{lemma:Gaussian_sharp}]
Let $g$ be a standard Gaussian vector in $\R^n$.  Let $E_0$ be the
span of first $k$ coordinate unit vectors and note that
    \begin{equation*}
      \norm{f}_{\infty} = (2\pi \sigma^2)^{-k/2} (2\pi)^{-(n-k)/2} = 1 \, ,
  \end{equation*}while
    \begin{equation*}
      \norm{f_{\pi_{E_0}(\mu)}}_{\infty} = (2\pi)^{(n-k)/2}.
    \end{equation*}
Let $\eps>0$ and assume that $E_1\in G_{n,k}$ satisfies $d(E_0, E_1)<
\eps$.  Write
\begin{equation*}
  P_{E_1}D = \sigma P_{E_1} P_{E_0} + P_{E_1}P_{E_0^{\perp}}.  
\end{equation*}
Using singular value decomposition, there exist orthonormal bases
$u_1,\ldots, u_k$ of $E_0$ and $v_1,\ldots,v_k$ of $E_1$ such that
\begin{equation*}
  P_{E_1}P_{E_0} =\sum_{i=1}^k a_i u_i\otimes v_i,
\end{equation*}
where $0\ls a_i=\langle u_i, v_i\rangle\ls 1$.  Since $P_{E_1}
P_{E_0^{\perp}} = P_{E_1}(I-P_{E_0})$, we can write
\begin{eqnarray*}
  P_{E_1}P_{E_0^{\perp}}=\sum_{i=1}^k v_i \otimes v_i - \sum_{i=1}^k
  a_i u_i \otimes v_i =  \sum_{i:a_i\not= 1} \sqrt{1-a_i^2}f_i \otimes v_i,
\end{eqnarray*}where 
$f_i=\frac{v_i-a_i u_i}{\sqrt{1-a_i^2}} $.  Set $\gamma_i=\langle g,
u_i \rangle$ and $\gamma_i'= \langle g, f_i\rangle$. Since $u_i$ and
$f_i$ are orthogonal, $\gamma_i$ and $\gamma_i'$ are independent.
Note that
\begin{equation*}
 P_{E_1}Dg = \sum_{i:a_i=1} \sigma \gamma_i v_i +\sum_{i:a_i\not = 1}
 \left(\sigma a_i \gamma_i +\sqrt{1-a_i^2}\gamma_i'\right)v_i.
\end{equation*} 
Sicne $\sqrt{1-a_i^2} \ls \norm{P_{E_0}-P_{E_1}} = d(E_0,E_1) \ls
\eps$ for each $i=1,\ldots,k$, the covariance matrix $A$ of
$P_{E_1}Dg$ satisfies
\begin{equation*}
(\mathop{\rm det}(A))^{1/k} \ls \frac{1}{k}\mathop{\rm
    tr}(A)=\frac{1}{k} \sum_{i=1}^k (\sigma^2 a_i^2 +1-a_i^2) \ls
  \sigma^2 + \eps^2.
\end{equation*}
It follows that for any $E_1$ with $d(E_0,E_1)\ls \eps$, we have
\begin{equation*}
\norm{f_{\pi_{E_1}(\mu)}}_{\infty}^{1/k} =
\frac{1}{((2\pi)^k\mathop{\rm det}(A))^{1/(2k)}}
\gr \frac{1}{\sqrt{2\pi(\sigma^2+\eps^2)}}.
\end{equation*}
We now apply Proposition \ref{prop:Szarek_cap} with $\eps=1/(cs)$ to
obtain (\ref{eqn:Gaussian_sharp}).
\end{proof}

\subsection{Concluding remarks}

In light of Theorem \ref{thm:marginal_body}, a natural question arises
here: under what additional condition(s), can one guarantee that {\it
  all} marginal densities of such functions $f$ are suitably bounded,
i.e.,
\begin{equation}
\label{result-wish}
\norm{f_{\pi_{E}(\mu)}}_{\infty}^{\frac{1}{k}} \ls C
\norm{f}_{\infty}^{\frac{1}{n}}, \quad \forall E \in G_{n,k},
\end{equation}
 as in the case of Rudelson and Vershynin's result
 \eqref{eqn:RV_bound}. The example of independent Gaussians with
 different variances in Lemma \ref{lemma:Gaussian_sharp} shows that
 one needs a type of non-degeneracy condition. For instance, one may
 assume that $\mu$ is isotropic, namely,
\begin{equation}
\label{assumption-f-1}
 \int_{\mathbb R^{n}} \langle x, \theta \rangle d\mu(x)=0 \text{ and }
 \ \int_{\mathbb R^{n}} |\langle x, \theta \rangle|^{2} d\mu(x)=1
 \quad\forall \theta \in S^{n-1}.
\end{equation}
If $\mu$ is an isotropic, subgaussian, $\log$-concave probability
measure then \eqref{result-wish} holds as a consequence of a result of
Bourgain on the isotropic constant of such measures
\cite{Bourgain_PSI2}.  If $\mu$ is isotropic and $\log$-concave, the
isotropic constant of $\mu$ is defined by $L_{\mu}:=
\norm{f_{\mu}}_{\infty}^{1/n}$. A major open problem known as the
Hyperplane Conjecture asks if there exists an absolute constant $C$
(independent of $n$ and $\mu$) such that $L_{\mu} \ls C$. The best
known bound (of order $n^{1/4}$) is due to Klartag \cite{Klartag_LK},
improving an earlier result of Bourgain \cite{Bourgain_LK}. For
detailed discussion on this conjecture, see \cite{BGVV}. Thus in the
class of isotropic $\log$-concave probability measures $\mu$,
inequality \eqref{result-wish} amounts to asking if
\begin{equation}
\label{result-wish-iso}
 L_{\pi_{E}(\mu)} \ls C L_{\mu} , \ \forall E \in G_{n,k}.
\end{equation}
It is not difficult to show that the above question is just another
equivalent formulation of the Hyperplane Conjecture. (For a proof of
this fact see \cite{Paouris_marginals}). The inequality of
  Busemann-Straus and Grinberg (\ref{eqn:Grinberg}) has been used
  recently in \cite{PaourisValettas} to show that the marginals that
  satisfy the Hyperplane Conjecture form a $1$-net in $G_{n,k}$ for
  $k\ls \sqrt{n}$. One of the main ingredients in the proof is entropy
  numbers on the Grassmanian established by Szarek (Proposition
  \ref{prop:Szarek_cap} above). Using these estimates along with
  Theorem \ref{thm:marginal_intro}, we get the following corollary.

\begin{corollary}
\label{result-corollary}
Let $\mu$ be a probability on $\R^n$ with a bounded density $f$.  Then
for every $1\ls k \ls n-1$, $E\in G_{n,k}$ and $\eta>0$, there exists
$E_0\in G_{n,k}$ with $d(E_0,E)\ls \eta$ such that for any $z\in E_0$,
\begin{equation}
\label{result-4}
 \pi_{E_0}(\mu) \left(\{x\in E_0 : \abs{x-z} \ls \varepsilon \sqrt{k}\}
 \right) \ls
 \left(c_{2}\frac{\varepsilon}{\eta}\norm{f}_{\infty}^{1/n}\right)^{\frac{kn}{n+1}}.
 \end{equation}
\end{corollary}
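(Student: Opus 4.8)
The plan is to combine the quantitative marginal estimate of Theorem~\ref{thm:marginal_body} (the refinement of Theorem~\ref{thm:marginal_intro} with a free parameter $s$) with the lower bound on the $\mu_{n,k}$-measure of metric balls in $G_{n,k}$ supplied by Proposition~\ref{prop:Szarek_cap}. Fix $E\in G_{n,k}$ and $\eta>0$; since $d$ is bounded by an absolute constant on $G_{n,k}$ and the assertion is only of interest for small perturbations, I may assume $\eta\ls 1$. The idea is to run Theorem~\ref{thm:marginal_body} with a value $s=s(\eta)$ of order $1/\eta$, so that the exceptional set $G_{n,k}\setminus\mathcal{A}_s$ has $\mu_{n,k}$-measure at most $2s^{-kn}$, and then to arrange that this is strictly smaller than the measure of the $\eta$-ball about $E$. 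Once that is done, $\mathcal{A}_s$ must meet that ball, and any $E_0$ in the intersection is the desired subspace.

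Concretely, let $c$ be the constant from Proposition~\ref{prop:Szarek_cap}, which we may take to satisfy $c\le 1$, so that $\mu_{n,k}(\{F\in G_{n,k}:d(E,F)\ls\eta\})\gr(c\eta)^{k(n-k)}$. I would set
\[
  s:=2(c\eta)^{-(n-k)/n},
\]
which is chosen precisely so that $2s^{-kn}=2^{1-kn}(c\eta)^{k(n-k)}<(c\eta)^{k(n-k)}$. Two elementary checks are then needed. First, $s>1$: this holds because $(c\eta)^{(n-k)/n}<2$ whenever $c\eta<2$, which is ensured by $\eta\ls1$ and $c\le1$. Second, $s\ls C/\eta$ with $C$ an absolute constant independent of $n$ and $k$: writing $s=2c^{-(n-k)/n}\eta^{-(n-k)/n}$ and using $(n-k)/n<1$ together with $0<c\le1$ and $0<\eta\ls1$ to get $c^{-(n-k)/n}\ls c^{-1}$ and $\eta^{-(n-k)/n}\ls\eta^{-1}$, one obtains $s\ls 2/(c\eta)=:C/\eta$.

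With this choice, $\mu_{n,k}(G_{n,k}\setminus\mathcal{A}_s)\ls 2s^{-kn}<(c\eta)^{k(n-k)}\ls\mu_{n,k}(\{F:d(E,F)\ls\eta\})$, so the $\eta$-ball about $E$ is not contained in the complement of $\mathcal{A}_s$; hence I may pick $E_0\in\mathcal{A}_s$ with $d(E_0,E)\ls\eta$. Applying part~(ii) of Theorem~\ref{thm:marginal_body} to $E_0$, for every $\varepsilon>0$ and every $z\in E_0$,
\[
  \pi_{E_0}(\mu)\left(\{x\in E_0:\abs{x-z}\ls\varepsilon\sqrt{k}\}\right)
  \ls\left(c_2 s\varepsilon\norm{f}_{\infty}^{1/n}\right)^{kn/(n+1)}
  \ls\left(c_2 C\,\frac{\varepsilon}{\eta}\,\norm{f}_{\infty}^{1/n}\right)^{kn/(n+1)},
\]
and renaming the absolute constant $c_2C$ as $c_2$ yields \eqref{result-4}.

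The hard part will be the matching of the two exponents at play: the entropy estimate forces metric balls in $G_{n,k}$ to have measure at least $(c\eta)^{k(n-k)}$, whereas Theorem~\ref{thm:marginal_body} only controls a set of measure $1-2s^{-kn}$. The argument closes exactly because $k(n-k)<kn$, which lets the ball outweigh the exceptional set once $s\asymp\eta^{-1}$; but this same gap is what produces the quantitative loss $\varepsilon\mapsto\varepsilon/\eta$ in \eqref{result-4} and confines the useful range of $\eta$ to an absolute-constant interval. The rest is bookkeeping, chiefly checking that $C$, and hence the final $c_2$, remains independent of $n$ and $k$, for which the bounds $(n-k)/n<1$, $c\le1$, and $\eta\ls1$ are precisely what is needed.
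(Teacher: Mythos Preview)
Your argument is correct and follows precisely the route the paper indicates just before the corollary: combine the measure estimate on $\mathcal{A}_s$ from Theorem~\ref{thm:marginal_body} with Szarek's lower bound on metric balls in $G_{n,k}$ (Proposition~\ref{prop:Szarek_cap}), choosing $s\asymp 1/\eta$ so that the $\eta$-ball about $E$ outweighs the exceptional set. Your bookkeeping on the exponents (in particular the use of $k(n-k)<kn$ and $(n-k)/n<1$ to keep the constants absolute) is exactly what is needed, and the reduction to $\eta\ls 1$ is harmless since $d$ is bounded by $1$ on $G_{n,k}$.
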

In other words, given any $E\in G_{n,k}$, there exists $E_0$, close to
$E$ such that $E_0$ has a nearly optimal small-ball probability
estimate.\\

\noindent{\bf Acknowledgements}

It is our pleasure to thank Alex Koldobsky, Fedja Nazarov, Gestur
Olafsson and Petros Valettas for helpful discussions. The first-named
author thanks the Oberwolfach Research Institute for Mathematics for
its hospitality and support, where part of this work was carried out.
The second-named author acknowledges with thanks support from the
A. Sloan foundation, BSF grant 2010288 and the US NSF grant
CAREER-1151711.  The third-named author would like to thank the
University of Missouri Research Board and Simons Foundation (grant
\#317733) for financial support which facilitated our collaboration.

\small \bibliographystyle{amsplain} \bibliography{DanPaoPivbib}

\medskip
\large
\noindent 
Susanna Dann: Vienna University of Technology, Wiedner Hauptstrasse
8-10, 1040 Vienna, Austria\\
{\tt susanna.dann@tuwien.ac.at}
\medskip

\noindent 
Grigoris Paouris: Department of Mathematics, Mailstop 3368, Texas A\&M
University, College Station, TX 77843-3368, USA\\ {\tt
  grigoris@math.tamu.edu}

\medskip

\noindent 
Peter Pivovarov: Mathematics Department, University of Missouri,
Columbia, MO 65211, USA, \\{\tt pivovarovp@missouri.edu}

\end{document}